\title{Application of a unified Kenmotsu-type formula for surfaces 
in Euclidean or Lorentzian three-space %
}
\author{Masatoshi Kokubu}
\date{November 15, 2017}
\address{%
   Department of Mathematics,
   School of Engineering,
   Tokyo Denki University,
   Tokyo, 120-8551,
   Japan
}
\email{kokubu@cck.dendai.ac.jp}
\thanks{%
The author was 
supported by Grant-in-Aid for 
Scientific Research (C) No.~17K05227
 from the Japan Society for the Promotion of Science.}
\renewcommand\L{{\mathbb L}}
\newcommand\E{{\mathbb E}}
\newcommand\la{\langle}
\newcommand\ra{\rangle}
\newcommand\iii{\mathrm{i}}
\newcommand\jjj{\mathrm{j}}
\newcommand\fff{\, \mathrm{I}}
\newcommand\sff{\, \mathrm{I\!I}}
\newcommand\tff{\, \mathrm{I\!I\!I}}
\newcommand\R{{\mathbb R}}
\newcommand\C{{\mathbb C}}
\newcommand\xbf{{\mathbf x}}
\newcommand\ybf{{\boldsymbol y}}
\newcommand\nbf{\boldsymbol n}
\newcommand\cbf{\boldsymbol c}
\renewcommand\Re{\operatorname{Re}}
\renewcommand\Im{\operatorname{Im}}
\newcommand\sn{\mathop{\mathrm{sn}}\nolimits}
\newcommand\cn{\mathop{\mathrm{cn}}\nolimits}
\newcommand\dn{\mathop{\mathrm{dn}}\nolimits}
\newcommand\am{\mathop{\mathrm{am}}\nolimits}
\newcommand\ns{\mathop{\mathrm{ns}}\nolimits}
\newcommand\cs{\mathop{\mathrm{cs}}\nolimits}
\theoremstyle{plain}
\newtheorem{theorem}{Theorem}[section]
\newtheorem{proposition}[theorem]{Proposition}
\newtheorem{corollary}[theorem]{Corollary}
\newtheorem{lemma}[theorem]{Lemma}
\theoremstyle{definition}
\newtheorem{remark}[theorem]{Remark}
\newtheorem{example}[theorem]{Example}
\numberwithin{equation}{section}
\subjclass[2000]{Primary 53A05; Secondary 53A10, 53A35, 53C45}
\keywords{Kenmotsu formula, constant mean curvature, 
Jacobi's elliptic function, Delaunay surface, helicoid}
\begin{document}
\begin{abstract}
Kenmotsu's formula describes surfaces in Euclidean 3-space 
by their mean curvature functions and Gauss maps. 
In Lorentzian 3-space, 
Akuta\-gawa-Nishikawa's formula and Magid's formula are Kenmotsu-type formulas 
for spacelike surfaces and for timelike surfaces, respectively. 
We apply them to a few problems concerning rotational or helicoidal surfaces 
with constant mean curvature. 
Before that,  
we show that the three formulas above can be written in a unified single equation.
\end{abstract}

\maketitle

\section{Introduction}
In surface theory, the Weierstrass representation for minimal surfaces 
is one of the fundamental tools. 
There are also Weierstrass-type representation formulas 
for spacelike maximal surfaces and for timelike minimal surfaces 
in Lorentzian $3$-space $\L^3$, and they play an important role as well. 
On the other hand, in the case where surfaces are not minimal, 
Kenmotsu \cite{K} gave a formula which describes surfaces in Euclidean $3$-space 
$\E^3$ in terms of the Gauss map and mean curvature function $H$ 
(under the assumption that $H \ne 0$). After that, 
Akutagawa-Nishikawa \cite{AN} and Magid \cite{M} (see also \cite{AEG})
gave such formulas for 
spacelike surfaces and timelike surfaces in $\L^3$, respectively. 
These three formulas, although established independently, 
have created a common understanding that  
they all come from essentially the same principle. 
One of the aims of this paper is to unify these three formulas 
(see Sections \ref{sec:Ken-formula}, \ref{sec:pf-ken-type}).    

Kenmotsu's, Akutagawa-Nishikawa's and Magid's formulas have been considered as 
important ones in Euclidean or Lorentzian surface theory 
similar to the Weierstrass representation, whereas 
it seems that they do not have much direct application so far. 
Another aim is to present a practical aspect of these three formulas. 
We give applications of a Kenmotsu-type formula to surfaces of constant mean curvature  
from the classical differential geometric viewpoint.   
Although there have been several modern theories 
 nowadays, our argument is independent of them. 
More precisely, we provide representations of (Euclidean or Lorentzian) 
Delaunay surfaces 
and helicoidal surfaces with constant mean curvature ({\it cmc-$H$ helicoids}, 
for short) in $\E^3$ as explicitly as possible
(see Sections \ref{sec:delaunay}, \ref{sec:cmc-helicoids}). 
It has been  pointed out in many literatures that 
Delaunay surfaces can be described 
in terms of elliptic functions and elliptic integrals. 
In fact, for example, we can find in \cite{DHMV}
an explicit description of unduloids. 
We provide in this paper all (Lorentzian) Delaunay surfaces in terms of 
Jacobi's elliptic functions.
We show that cmc-$H$ helicoids can also be expressed with Jacobi's elliptic functions. 
Jacobi's elliptic functions play an important role in this paper.

The remaining aim of this paper is to give further applications 
(see Section \ref{sec:f-i-cmc-heli}).  
To make the most of the explicit representation here, 
we solve the periodicity condition when a cmc-$H$ helicoid is cylindrical, 
and as a result, we give another proof of Burstall-Kilian's theorem \cite{BK}. 
Also, we 
introduce the radius of a cmc-$H$ helicoid, and then
we notice that a cmc-$H$ helicoid is determined by the pitch and radius. 
As a consequence, the periodicity of a cmc-$H$ helicoid is determined by    
the pitch and radius. 
We also give a criterion by the pitch and radius whether two cmc-$H$ helicoids 
belong to the same associated family (i.e. 
a family of non-congruent but locally isometric surfaces with the same mean curvature).  
As a corollary, we can show that an unduloid and a nodoid 
whose mean curvatures are the same value are associated 
(i.e., locally isometric) if and only if the ratios $\rho/R$
of the inner radius $\rho$ and the outer radius $R$ are coincident. 
 

\section{Kenmotsu-type formula}\label{sec:Ken-formula}
We give a coordinate-free formula  
which describes surfaces by their mean curvature functions and Gauss maps.  
As a consequence, formulas due to Kenmotsu, Akutagawa-Nishikawa and Magid can 
be considered within a unified single formula. 
(The original formulas \cite{K}, \cite{AN}, \cite{M} are given in terms of 
local coordinate systems, respectively.)

In fact, our statement is as follows:

\begin{theorem}[cf. \cite{K}, \cite{AN}, \cite{M}]\label{thm:ken-theorem}
Let $M$ be a connected, oriented $2$-dimensional manifold, 
and suppose that $\xbf \colon M \to N$ is either 
\begin{enumerate}[(i)]
 \item an immersion to $N=\E^3=(\R^3, \la , \ra_E)$, 
 \item a spacelike immersion to $N=\L^3=(\R^3, \la , \ra_L)$, or
 \item a timelike immersion to $N=\L^3=(\R^3, \la , \ra_L)$
\end{enumerate}
 with non-zero mean curvature function $H$ and Gauss map $\nbf$. 
Then $\xbf$ can be represented by $H$ and $\nbf$, as 
\begin{equation}\label{eq:unified-formula}
 \xbf = -\int \frac{1}{2H} \left\{ d \nbf + \nbf \times (*d \nbf) \right\}
\end{equation}  
where $\times$ denotes the Euclidean (resp. Lorentzian) vector product for (i) 
(resp. for (ii), (iii)), and $*$ denotes the Riemannian (resp. Lorentzian) 
Hodge $*$-operator on $T^*M$ with respect to the induced metric 
for (i), (ii) (resp. for (iii)).

Conversely, suppose that $\nbf$ is a unit vector-valued function and $H$ is a function
 on a simply-connected Riemannian or Lorentzian $2$-manifold $M$ satisfying 
\begin{equation}\label{eq:co-int-condition}
 d \left(
\frac{ d \nbf + \nbf \times (*d \nbf)}{H}
\right)=0 \quad (d \nbf + \nbf \times (* d \nbf) \ne 0). 
\end{equation}
Then \eqref{eq:unified-formula} gives  
\begin{enumerate}[(i)]
 \item a surface in $\E^3$ if $M$ is Riemannian,  
$\nbf$ is unit Euclidean vector-valued and $\times = \times_E$, 
 \item a spacelike surface in $\L^3$ if $M$ is Riemannian, 
$\nbf$ is negative unit Lorentzian vector-valued and $\times = \times_L$, or
 \item a timelike surface in $\L^3$  if $M$ is Lorentzian,
$\nbf$ is positive unit Lorentzian vector-valued and $\times = \times_L$, 
\end{enumerate}
whose Gauss map and mean curvature function are $\nbf$ and $H$.  
\end{theorem}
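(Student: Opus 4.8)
The plan is to reduce both assertions to the single pointwise identity
\begin{equation*}
 d\nbf + \nbf \times (* d\nbf) = -2H\, d\xbf ,
\end{equation*}
after which the integral formula \eqref{eq:unified-formula} is immediate in the direct part, and the converse amounts to running this identity backwards. The whole proof is thus organized around establishing this identity once, in a form valid for all three cases (i)--(iii).

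For the direct part, I would fix a frame $\{e_1,e_2\}$ of $M$ adapted to the induced metric, with dual coframe $\{\theta^1,\theta^2\}$, and write the Weingarten relation as $d\nbf = -\,d\xbf\circ S$, where $S$ is the shape operator. Expanding $d\nbf$, $*d\nbf$ and $\nbf\times(*d\nbf)$ in this frame, the essential observation is that the map $v\mapsto \nbf\times v$ on the normal complement $\nbf^\perp$ realizes the same rotation (cases (i), (ii)) or split rotation (case (iii)) that the Hodge $*$ performs on $T^*M$, once the signs dictated by the three signatures are fixed. Substituting, the antisymmetric part of $S$ cancels by self-adjointness of $S$ with respect to the induced metric, while the symmetric part contributes $\trace S = 2H$; this produces the displayed identity, and since it is an identity of $\R^3$-valued $1$-forms with $d\xbf$ exact, the formula \eqref{eq:unified-formula} follows at once.

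For the converse, set $\omega := -\tfrac{1}{2H}\{d\nbf + \nbf\times(*d\nbf)\}$. Hypothesis \eqref{eq:co-int-condition} says $d\omega=0$, so on the simply connected $M$ there is $\xbf$ with $d\xbf=\omega$. Since $\la\nbf,\nbf\ra$ is constant we have $\la d\nbf,\nbf\ra=0$, and $\nbf\times(*d\nbf)$ is orthogonal to $\nbf$; hence $\la d\xbf,\nbf\ra=0$, so $\nbf$ is normal to the image. Computing $\la d\xbf,d\xbf\ra$ in the frame $\{e_1,e_2\}$ shows that the induced metric is conformal to the given metric on $M$, with conformal factor proportional to $|d\nbf+\nbf\times(*d\nbf)|^2/(4H^2)$; the nonvanishing clause in \eqref{eq:co-int-condition} guarantees this factor is nonzero, so $\xbf$ is a genuine immersion of the asserted causal type and $\nbf$ is its Gauss map, the prescribed causal character of $\nbf$ in (i)--(iii) matching the signature of the induced metric. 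Finally, because the Hodge $*$ on $1$-forms of a surface depends only on the conformal class of the metric, the $*$ used in the construction coincides with the one for the induced metric; applying the already-established direct identity to $\xbf$ gives $d\nbf+\nbf\times(*d\nbf)=-2H_{\xbf}\,d\xbf$, and comparison with the definition of $\omega$ forces $H_{\xbf}=H$.

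I expect the main obstacle to be the uniform sign bookkeeping across the three signatures, concentrated in the step identifying $v\mapsto\nbf\times v$ with the Hodge $*$. In case (iii) the normal complement carries an indefinite form and $**=+\mathrm{id}$ on $1$-forms, so the positive-definite "rotation by $90^\circ$" picture is replaced by a split-complex structure squaring to $+\mathrm{id}$; a positive-definite orthonormal frame is then unavailable, and it is cleaner to use a null or pseudo-orthonormal frame there. I would therefore isolate the required compatibility among $\times$, $*$, the ambient metric, and the causal type of $\nbf$ as a short preliminary algebraic lemma (the content of the unified formulation developed in Section~\ref{sec:pf-ken-type}), so that the frame computation above goes through verbatim in all three cases with only the prescribed signs changing.
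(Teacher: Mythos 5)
Your proposal is correct and follows essentially the same route as the paper: both rest on the pointwise identity $d\nbf+\nbf\times(*d\nbf)=-2H\,d\xbf$, established by an adapted-frame computation in which the action of $\nbf\times(\,\cdot\,)$ on the tangent plane is matched with the Hodge $*$ on $T^*M$ (with the self-adjointness of the shape operator killing the off-trace terms), followed by Poincar\'e's lemma for the converse. The only differences are notational --- the paper packages the frame computation in complex/paracomplex variables rather than the shape operator --- and that you spell out the converse (normality of $\nbf$, conformality of the induced metric, recovery of $H$), which the paper dispatches in a single line.
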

In the statement of Theorem \ref{thm:ken-theorem}, the following are utilized:  
\begin{itemize}
 \item 
For $\boldsymbol a={}^t(a_1,a_2,a_3), \boldsymbol b={}^t (b_1,b_2,b_3) \in \R^3$, 
\begin{equation*}
 \la \boldsymbol a, \boldsymbol b \ra_E =a_1b_1+a_2b_2+a_3b_3, \quad
 \la \boldsymbol a, \boldsymbol b \ra_L =a_1b_1+a_2b_2-a_3b_3,   
\end{equation*}
and 
\begin{equation*}
\boldsymbol a \times_E \boldsymbol b = 
     \begin{pmatrix}
a_2 b_3 - b_2 a_3
\\
a_3 b_1 - b_3 a_1 
\\
a_1 b_2 - b_1 a_2
     \end{pmatrix}, 
\quad
\boldsymbol a \times_L \boldsymbol b = 
     \begin{pmatrix}
a_2 b_3 - b_2 a_3
\\
a_3 b_1 - b_3 a_1 
\\
-(a_1 b_2 - b_1 a_2)
     \end{pmatrix}. 
\end{equation*}
\item 
The Gauss map $\nbf$ is regarded as a vector-valued function, 
indeed, $\nbf \colon M \to S^2 = \{ \xbf \mid \la \xbf, \xbf \ra_E =1 \} \subset \E^3$ 
in the case (i), 
$\nbf \colon M \to H^2 = \{ \xbf \mid \la \xbf, \xbf \ra_L =-1 \}\subset \L^3$ 
in the case (ii), and 
$\nbf \colon M \to S^2_1 = \{ \xbf \mid \la \xbf, \xbf \ra_L =1 \} \subset \L^3$ 
in the case (iii). 
\end{itemize}

\begin{remark}
\begin{enumerate}
 \item 
 As a pre-equal to \eqref{eq:unified-formula}, we have  
\begin{equation}\label{eq:unified-pre-formula}
 -2 H d \xbf = d \nbf + \nbf \times (* d \nbf),   
\end{equation}
so the additional condition in \eqref{eq:co-int-condition} is that 
$\xbf$ does not have zero mean curvature.

Note that 
 the formula \eqref{eq:unified-pre-formula} for the Euclidean case (i) was 
also known in \cite{BFLPP} via a quaternionic description of surfaces. 

\item During the preparing this paper, the author found a paper \cite{SSP}
which discusses a unification of representation formulas from the 
different viewpoint from ours.  
\end{enumerate}
\end{remark}

As a consequence of Theorem \ref{thm:ken-theorem}, 
for surfaces with non-zero constant mean curvature $H$ (called {\it cmc-$H$} for short), 
we can assert the following:
\begin{corollary}\label{cor:cmc-case}
Let $M$ be a connected, oriented $2$-dimensional manifold, 
and suppose that $\xbf \colon M \to N$ is either 
\begin{enumerate}[(i)]
 \item a cmc-$H$ immersion to $N=\E^3$, 
 \item a spacelike cmc-$H$ immersion to $N=\L^3$, or
 \item a timelike cmc-$H$ immersion to $N=\L^3$ 
\end{enumerate}
 with the Gauss map $\nbf$. 
Then $\xbf$ is represented as  
\begin{equation}\label{eq:cmc-H-immersion}
 \xbf = -\frac{1}{2H} \left\{ \nbf + \int \nbf \times (*d \nbf) \right\}, 
\end{equation}  
where $\times$ denotes the Euclidean (resp. Lorentzian) vector product for (i) 
(resp. for (ii), (iii)), and $*$ denotes the Riemannian (resp. Lorentzian) 
Hodge $*$-operator on $T^*M$ with respect to the induced metric
for (i), (ii) (resp. for (iii)).
 
Conversely, suppose that $\nbf$ is a unit vector-valued function 
 on a simply-connected Riemannian or Lorentzian $2$-manifold $M$ satisfying 
\begin{equation}\label{eq:harmonic}
  d*d \nbf \text{ is parallel to } \nbf, \quad 
(d \nbf + \nbf \times * d \nbf \ne 0). 
\end{equation}
Then \eqref{eq:cmc-H-immersion} with a non-zero constant $H$ gives  
\begin{enumerate}[(i)]
 \item a cmc-$H$ surface in $\E^3$ if $M$ is Riemannian and 
$\nbf$ is unit Euclidean vector-valued, 
 \item a spacelike cmc-$H$ surface in $\L^3$ if $M$ is Riemannian and 
$\nbf$ is negative unit Lorentzian vector-valued, or
 \item a timelike cmc-$H$ surface in $\L^3$  if $M$ is Lorentzian and 
$\nbf$ is positive unit Lorentzian vector-valued, 
\end{enumerate}
whose Gauss map and mean curvature are $\nbf$ and $H$.  
\end{corollary}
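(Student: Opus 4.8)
The plan is to derive both directions of Corollary~\ref{cor:cmc-case} from Theorem~\ref{thm:ken-theorem} by specializing to constant $H$; the only substantive computation is a pointwise identity that collapses the integrability condition \eqref{eq:co-int-condition} to \eqref{eq:harmonic}.

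For the direct direction, I would begin from the representation \eqref{eq:unified-formula} supplied by Theorem~\ref{thm:ken-theorem}. Since $H$ is a non-zero constant, the factor $1/(2H)$ leaves the integral, and as $\int d\nbf = \nbf$ up to an additive constant (an inessential translation of the surface), \eqref{eq:unified-formula} becomes exactly \eqref{eq:cmc-H-immersion}.

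For the converse, the crux is the identity
\begin{equation*}
 d\nbf \times (*d\nbf) = 0.
\end{equation*}
I would check it in an arbitrary orthonormal coframe $\theta^1,\theta^2$: writing $d\nbf = \nbf_1\,\theta^1 + \nbf_2\,\theta^2$, the Hodge star carries each $\theta^i$ to a scalar multiple of the other, so in the bilinear expansion of $d\nbf \times (*d\nbf)$ every term carries a factor $\theta^i\wedge\theta^i=0$ or $\nbf_i\times\nbf_i=0$ and hence vanishes. Because this uses only antisymmetry of $\wedge$ together with bilinearity and antisymmetry of the vector product, it holds uniformly in cases (i)--(iii), independently of the sign conventions for $*$ and of whether $\times=\times_E$ or $\times_L$. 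Combining this with $d(d\nbf)=0$ and the Leibniz rule applied to \eqref{eq:unified-pre-formula}, I obtain, for constant $H$,
\begin{equation*}
 d\left( d\nbf + \nbf \times (*d\nbf)\right) = \nbf \times d(*d\nbf),
\end{equation*}
so that \eqref{eq:co-int-condition} is equivalent to $\nbf \times d(*d\nbf)=0$.

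It then remains to observe that, for a non-null unit vector field $\nbf$, one has $\nbf\times\boldsymbol{v}=0$ if and only if $\boldsymbol{v}$ is parallel to $\nbf$; this identifies the previous condition with \eqref{eq:harmonic}, the requirement that $d(*d\nbf)$ be parallel to $\nbf$ (the nonvanishing caveat transfers verbatim between the two conditions). The converse part of Theorem~\ref{thm:ken-theorem}, with $H$ constant, then yields through \eqref{eq:unified-formula} a surface of the stated type whose Gauss map is $\nbf$ and whose mean curvature is $H$; the direct-direction computation shows it is given by \eqref{eq:cmc-H-immersion}. The one place demanding care is this last equivalence in the Lorentzian cases, where the characterization of $\times_L$-parallel vectors must be verified against the causal type of $\nbf$; but since $\nbf$ is timelike in (ii) and spacelike in (iii) — non-null in both — no obstruction appears.
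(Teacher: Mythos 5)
Your proposal is correct and follows essentially the same route as the paper: the author likewise derives the corollary by specializing Theorem~\ref{thm:ken-theorem} to constant $H$, noting (in the remark after case (i) of Section~3) that the integrability condition \eqref{eq:co-int-condition} rewrites as $\frac{dH}{H}\wedge\{d\nbf+\nbf\times(*d\nbf)\}=\nbf\times(d*d\nbf)$ and hence reduces to $\nbf\times(d*d\nbf)=0$ when $H$ is constant. Your explicit verification of the key identity $d\nbf\wedge_{\times}(*d\nbf)=0$ in an orthonormal coframe simply makes precise the computation the paper leaves implicit.
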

It is a well known fact that 
 the condition \eqref{eq:harmonic} is equivalent to the 
harmonicity of the map $\nbf \colon M \to S^2$, $H^2$ or $S^2_1$ 
in each case (i), (ii) or (iii), 
and it is also well known that a surface has constant mean curvature if 
and only if its Gauss map is harmonic (cf. \cite{RV}, \cite{KM}).    

We call the formula \eqref{eq:unified-formula} (and its special case 
\eqref{eq:cmc-H-immersion}) 
the {\it Kenmotsu-type formula}.

\begin{remark}
\begin{enumerate}
 \item 
Given a cmc-$H$ surface whose Gauss map is $\nbf$, the map $-\nbf$ is also 
harmonic. By the formula \eqref{eq:cmc-H-immersion} with $-\nbf$, 
we obtain another cmc-$H$ surface 
which is known as the parallel surface of $\xbf$ with constant mean curvature.  
\item
Another well-known fact is that a parallel surface 
$\check \xbf:=\xbf + \frac{1}{2H}\nbf$ 
of a cmc-$H$ surface $\xbf$, i.e.,   
\begin{equation}\label{eq:lelieuvre-type}
 \check \xbf 
= -\frac{1}{2H} \left\{ \int \nbf \times (*d \nbf) \right\}
\end{equation}
has constant Gaussian curvature 
\begin{equation*}
 \begin{cases}
\phantom{-}4 H^2 & \text{in the case (i) or (iii)}, \\
-4 H^2 & \text{in the case (ii)}.   
 \end{cases}
\end{equation*}
We call $\check \xbf$ the {\it cgc-companion} of $\xbf$. 

Incidentally, a representation formula for negative constant Gaussian curvature surface 
in $\E^3$ is known as {\it Lelieuvre's formula}. We thus have obtained a unified 
 {\it Lelieuvre-type} formula \eqref{eq:lelieuvre-type} for  
positive constant Gaussian curvature surfaces in $\E^3$,  for 
spacelike surfaces with negative constant Gaussian curvature in $\L^3$, 
and for timelike surfaces with positive constant Gaussian curvature in $\L^3$. 

We also remark that there are formulas due to Sym \cite{Sym} and Bobenko \cite{Bo}, 
which create surfaces with negative constant Gaussian curvature and 
surfaces with positive constant Gaussian curvature (simultaneously cmc-$H$) 
from harmonic maps to $S^2$, respectively.      
Sym-Bobenko's formula is well-known as one of the most powerful tool nowadays.   
We refer to \cite{MS} for details. 
\end{enumerate} 
\end{remark}

\section{Proof of Kenmotsu-type formula}\label{sec:pf-ken-type}
We shall provide proofs by the moving frame method. 
Notations here are standard (cf. \cite{Br}, etc.). 
\subsection{The case (i)}
Let $e_1, e_2$ be a local orthonormal frame tangent to $\xbf(M) \subset \E^3$. 
We regard $e_1, e_2$ as vector-valued functions. Define a unit normal field $e_3$ 
by 
\begin{equation*}
 e_3 := e_1 \times e_2. 
\end{equation*}
Define local $1$-forms $\omega^i$ ($i=1,2$) and 
$\omega^\alpha_\beta$ ($\alpha, \beta =1,2,3$) by 
\begin{equation}\label{eq:dx-de_a}
 d \xbf = e_i \omega^i, \quad de_\alpha =e_\beta \omega^\beta_\alpha. 
\end{equation}
Note that $(\omega^\alpha_\beta)$ is $\mathfrak{so}(3)$-valued and 
\begin{equation*}
 d \omega^i = -\omega^i_j \wedge \omega^j, \quad 
 d \omega^\alpha_\beta = -\omega_\gamma^\alpha \wedge \omega^\gamma_\beta.  
\end{equation*} 
are satisfied. Moreover, defining $h_{ij}$ by 
\begin{equation*}
 \omega^3_i = h_{ij}\omega^j, 
\end{equation*}
then $h_{ij}=h_{ji}$ and the mean curvature $H$ is, by definition, 
\begin{equation*}
 H =\frac{h_{11}+h_{22}}{2}. 
\end{equation*} 

We shall also use complex-number notation. Set 
\begin{equation*}
 e := \frac{1}{2} (e_1 - \iii e_2), \quad 
 \omega := \omega^1 + \iii \omega^2, \quad 
 \pi := \omega^3_1 - \iii \omega^3_2. 
\end{equation*}
Then we have 
\begin{equation}\label{eq:pi}
 \pi = q \omega + H \bar \omega, \text{ where } 
q= \frac{1}{2}(h_{11}- h_{22} -2 \iii h_{12}). 
\end{equation}
The equation \eqref{eq:dx-de_a} can be rewritten as
\begin{align}\label{eq:dx-de-de_2}
 d \xbf = e \omega + \bar e \bar \omega, \quad 
d e = - \iii e \omega^1_2 + \frac{1}{2} e_3 \pi, \quad 
d e_3 = -e \bar \pi - \bar e \pi. 
\end{align}
Since the Hodge $*$-operator acts as $* \omega = -\iii \omega$, 
it follows from \eqref{eq:pi}, \eqref{eq:dx-de-de_2} that  
\begin{align*}
 * d e_3 = \iii \{ 2H (e\omega -\bar e \bar \omega) 
- e \bar \pi + \bar e \pi \}. 
\end{align*}
Moreover, since $e_3 \times e = \iii e$, we have 
\begin{align*}
 e_3 \times (* d e_3) &= 
-2H ( e\omega + \bar e \bar \omega ) + e \bar \pi + \bar e \pi 
 = -2H d\xbf - d e_3.  
\end{align*}
The map $e_3$ is nothing but the Gauss map $\nbf$, and henceforth 
\begin{equation*}
 d \xbf = -\frac{1}{2H} \{ d \nbf + \nbf \times (*d \nbf) \},  
\end{equation*}
assuming $H \ne 0$. Integrating this, we have the former assertion. 

The latter assertion (i.e., converse assertion) follows from Poincar\'e's lemma. 

\begin{remark}
 The completely integrable condition \eqref{eq:co-int-condition} can be written as 
\begin{equation*}
 \frac{dH}{H} \wedge \{ d \nbf + \nbf \times (*d \nbf) \} = 
\nbf \times (d * d \nbf). 
\end{equation*}
Hence, in the case where $H$ is non-zero constant, 
\eqref{eq:co-int-condition} is equivalent to 
$\nbf \times (d * d \nbf) =0$, i.e., $d * d \nbf$ is parallel to $\nbf$. 
This observation completes the proof of Corollary \ref{cor:cmc-case}. 
\end{remark}

\subsection{The case (ii)} 
Although there are some points to pay attention to, e.g., 
using $e_3 = -e_1 \times_L e_2$, the 1-form $(\omega^\alpha_\beta)$ is 
$\mathfrak{so}(2,1)$-valued, etc, 
the proof is quite similar to the cases (i) above and (iii) below. 
So the proof is omitted here and is left to the reader.   

\subsection{The case (iii)} 
Let $e_1, e_2$ be a local orthonormal frame tangent to the 
timelike surface $\xbf(M) \subset \L^3$ such that 
\begin{equation*}
 \la e_1,e_1 \ra_L =1, \quad \la e_2,e_2 \ra_L = -1, \quad 
\la e_1,e_2 \ra_L = 0. 
\end{equation*} 
We regard $e_1, e_2$ as $\R^3$-valued functions. Define a unit normal field $e_3$ 
by 
\begin{equation*}
 e_3 := -e_1 \times_L e_2. 
\end{equation*}
It should be noted that $e_2 \times_L e_3 = -e_1$ and $e_3 \times_L e_1 = e_2$. 

Define local $1$-forms $\omega^i$ ($i=1,2$) and 
$\omega^\alpha_\beta$ ($\alpha, \beta =1,2,3$) by 
\begin{equation}\label{eq:timelike-dx-de_a}
 d \xbf = e_i \omega^i, \quad de_\alpha =e_\beta \omega^\beta_\alpha. 
\end{equation}
Note that $(\omega^\alpha_\beta)$ is $\mathfrak{so}(1,2)$-valued, i.e., 
$\omega^{\alpha}_{\alpha} =0$, $\omega^1_2 =\omega^2_1$, 
$\omega^1_3 =-\omega^3_1$, $\omega^2_3 =\omega^3_2$. 
The following equations hold:
\begin{equation*}
 d \omega^i = -\omega^i_j \wedge \omega^j, \quad 
 d \omega^\alpha_\beta = -\omega_\gamma^\alpha \wedge \omega^\gamma_\beta.  
\end{equation*} 
Moreover, define $h_{ij}$ by 
\begin{equation*}
 \omega^3_i = h_{ij}\omega^j, 
\end{equation*}
then $h_{ij}=h_{ji}$ and the mean curvature $H$ is, by definition, 
\begin{equation*}
 H =\frac{h_{11}-h_{22}}{2}. 
\end{equation*} 

We shall also use the paracomplex-number notation. 
Recall that  
\begin{equation*}
 \check \C = \{ x + \jjj y \mid x,y \in \R \}, 
\end{equation*}
with the rules of addition and multiplication given by 
\begin{align*}
& (x + \jjj y) + (u + \jjj v) = (x + u) + \jjj ( y+v), \\ 
& (x + \jjj y)(u + \jjj v) = (xu + yv) +\jjj (xv + yu) \qquad
\text{(in particular, $\jjj^2=1$), }
\end{align*}
is a commutative algebra whose elements are called \textit{paracomplex} numbers 
(also called \textit{split complex} numbers).  

Set  
\begin{equation*}
 e := \frac{1}{2} (e_1 + \jjj e_2), \quad 
 \omega := \omega^1 + \jjj \omega^2, \quad 
 \pi := \omega^3_1 + \jjj \omega^3_2. 
\end{equation*}
It is easily verified that 
\begin{equation}\label{eq:timelike-e3times}
 e_3 \times_L e = \jjj e. 
\end{equation}
The Hodge $*$-operator with respect to $I=(\omega^1)^2 -(\omega^2)^2$ 
satisfies, by definition,   
\begin{equation*}
 * \omega^1 = \omega^2, \quad 
 * \omega^2 = \omega^1, 
\end{equation*}
hence
\begin{equation}\label{eq:indefinite-*op}
 * \omega = \jjj \omega. 
\end{equation} 
On the other hand, we have 
\begin{equation}\label{eq:pi-3case}
 \pi = q \omega + H \bar \omega \text{ where } 
q= \frac{1}{2}(h_{11}+ h_{22} +2 \jjj h_{12}). 
\end{equation}
The equation \eqref{eq:timelike-dx-de_a} can be rewritten as
\begin{align}\label{eq:timelike-dx-de-de_2}
 d \xbf = e \omega + \bar e \bar \omega, \quad 
d e_3 = -e \bar \pi - \bar e \pi. 
\end{align}
It follows from \eqref{eq:indefinite-*op}, 
\eqref{eq:pi-3case}, \eqref{eq:timelike-dx-de-de_2} that  
\begin{align*}
 * d e_3 = -\jjj \{ 2H (e\omega -\bar e \bar \omega) 
- e \bar \pi + \bar e \pi \}. 
\end{align*}
Moreover, by \eqref{eq:timelike-e3times}, we have 
\begin{align*}
 e_3 \times_L (* d e_3) &= 
-2H ( e\omega + \bar e \bar \omega ) + e \bar \pi + \bar e \pi 
 = -2H d\xbf - d e_3.  
\end{align*}
The $e_3$ is nothing but the Gauss map $n$, and henceforth 
\begin{equation*}
 d \xbf = -\frac{1}{2H} \{ d \nbf + \nbf \times_L (*d \nbf) \}. 
\end{equation*}
Integrating this, we have the former assertion. 

The latter assertion (i.e., converse assertion) follows from Poincar\'e's lemma.

\section{Lorentzian Delaunay surfaces
}\label{sec:delaunay}
Corollary \ref{cor:cmc-case} asserts that a cmc-$H$ surface 
is determined by 
its intrinsic Riemannian (Lorentzian) structure and a harmonic map to 
$S^2$, $H^2$ or $S^2_1$.  

A \textit{Delaunay surface} is, by definition, a rotational surface with 
non-zero constant mean curvature in $\E^3$, after the work \cite{D}. 
Also, by the terminology (Lorentzian) \textit{Delaunay surface} 
we mean a rotational surface with non-zero constant mean curvature in $\L^3$.  

(Lorentzian) Delaunay surfaces have been studied by many authors.  
We revisit them in relation to Kenmotsu-type formulas. 
Euclidean Delaunay surfaces will be treated in 
Section \ref{sec:cmc-helicoids} as a special case of helicoidal cmc-$H$ surfaces. 
We consider Lorentzian Delaunay surfaces in this section. 
Lorentzian Delaunay surfaces are classified into six types by their causality 
and rotation axes: whether it is a spacelike surface or a timelike surface,  
 whether the rotation axis is timelike, spacelike or lightlike.  
Moreover, the classes of timelike Delaunay surfaces with spacelike axis can be 
divided into two kinds. Therefore, one can say that 
Lorentzian Delaunay surfaces are classified into seven types 
in a slightly finer sense.   

Applying a Lorentzian motion on a Delaunay surface $\xbf$ if necessary,  
we may assume the rotation axis of $\xbf$ is the line through the origin 
in the direction 
\begin{align*}
& {}^t(0,0,1) \text{ if $\xbf$ has timelike rotation axis}, \\
& {}^t(1,0,0) \text{ if $\xbf$ has spacelike rotation axis}, \\
& {}^t(1,0,1) \text{ if $\xbf$ has lightlike rotation axis}. 
\end{align*}
Moreover we can choose parameters $(u,v)$ so that 
$v$ is the rotation parameter and the induced metric is 
conformal either to $du^2 + dv^2$ or to $du^2 - dv^2$. 

According to these normalizations, the Gauss map $\nbf$ of $\xbf$
is written in one of the following forms:  
\begin{enumerate}
 \item 
(for a spacelike rotational surface $\xbf$ with timelike axis)
\begin{equation}\label{gm:srswt} 
\nbf \colon (u,v) \mapsto 
\begin{pmatrix}
 e^{\iii v} & 0 \\ 0 & 1
\end{pmatrix}
\begin{pmatrix}
 \sigma \\ \gamma
\end{pmatrix} 
\in H^2 \subset \C \times \R \cong (\L^3, ++-),  
\end{equation}
 \item 
(for a  timelike rotational surface $\xbf$ with timelike axis)
\begin{equation}\label{gm:trswt}
\nbf \colon (u,v) \mapsto 
\begin{pmatrix}
 e^{\iii v} & 0 \\ 0 & 1
\end{pmatrix}
\begin{pmatrix}
 \gamma \\ \sigma
\end{pmatrix} 
\in S^2_1 \subset \C \times \R \cong (\L^3, ++-), 
\end{equation}
 \item 
(for a spacelike rotational surface $\xbf$ with spacelike axis)
\begin{equation}\label{gm:srsws}
\nbf \colon (u,v) \mapsto 
\begin{pmatrix}
 1 & 0 \\ 0 & e^{-\jjj v}
\end{pmatrix}
\begin{pmatrix}
 \sigma \\ \jjj \gamma 
\end{pmatrix} 
\in H^2 \subset \R \times \check \C \cong (\L^3, ++-),  
\end{equation}
 \item 
 (for a timelike rotational surface $\xbf$ with spacelike axis of the first kind)
\begin{equation}\label{gm:trsws1}
\nbf \colon (u,v) \mapsto 
\begin{pmatrix}
 1 & 0 \\ 0 & e^{-\jjj v}
\end{pmatrix}
\begin{pmatrix}
 \gamma \\ \jjj \sigma 
\end{pmatrix} 
\in S^2_1 \subset \R \times \check \C \cong (\L^3, ++-), 
\end{equation}
\item 
(for a timelike rotational surface $\xbf$ with spacelike axis of the second kind)
\begin{equation}\label{gm:trsws2}
\nbf \colon (u,v) \mapsto
\begin{pmatrix}
 1 & 0 \\ 0 & e^{-\jjj v}
\end{pmatrix}
\begin{pmatrix}
 s \\ c 
\end{pmatrix} 
\in S^2_1 \subset \R \times \check \C \cong (\L^3, ++-),  
\end{equation}
 \item (for a spacelike rotational surface $\xbf$ with lightlike axis)
\begin{equation}\label{gm:srswl}
\nbf \colon (u,v) \mapsto
 \exp vA \cdot  
\left(
\begin{smallmatrix}
\sigma \\ 0 \\ \gamma 
\end{smallmatrix}
\right) \in H^2, 
\end{equation}

\item (for a timelike rotational surface $\xbf$ with lightlike axis)
\begin{equation}\label{gm:trswl}
\nbf \colon (u,v) \mapsto 
 \exp vA \cdot  
\left(
\begin{smallmatrix}
\gamma \\ 0 \\ \sigma 
\end{smallmatrix}
\right) \in S^2_1, 
\end{equation}
\end{enumerate}
where $\sigma = \sigma(u)$, $\gamma = \gamma(u)$ are functions with 
$\gamma^2 - \sigma^2 =1$, and 
$s=s(u)$, $c=c(u)$ are functions with $c^2+s^2=1$, and 
\begin{equation*}
 A = 
\begin{pmatrix}
 0 & -1 & 0 \\ 1 & 0 & -1 \\ 0 & -1 & 0
\end{pmatrix} 
\in 
\mathfrak{so}(2,1). 
\end{equation*}
We remark that one can start \eqref{gm:srsws}--\eqref{gm:trsws2} with $e^{\jjj v}$ 
instead of $e^{-\jjj v}$. However, we prefer $e^{-\jjj v}$ 
for the matching with Remark \ref{rem:gc-property} stated later.  

As mentioned above, we provide 
a conformal structure $[du^2 + dv^2]$ on the domain $(U; u,v)$ 
for \eqref{gm:srswt}, \eqref{gm:srsws}, \eqref{gm:srswl}, and  
a Lorentzian conformal structure $[du^2 - dv^2]$  
for \eqref{gm:trswt}, \eqref{gm:trsws1}, \eqref{gm:trsws2}, \eqref{gm:trswl}. 
It should be noted that each harmonic map equation for 
\eqref{gm:srswt}--\eqref{gm:trsws1} is 
the same one, indeed,  
\begin{equation}\label{eq:hm-eq}
 (\sigma' \gamma - \sigma \gamma')' -\sigma \gamma =0 
\quad (\text{with } \gamma^2-\sigma^2=1). 
\end{equation}
Moreover, the equation \eqref{eq:hm-eq} can be explicitly solved in terms of 
Jacobi's elliptic functions as follows: 
\begin{equation}\label{sol:hm-eq}
 \sigma = \cs (u,k), \quad 
 \gamma = \ns (u, k) \qquad (-\infty < k^2 < \infty), 
\end{equation}
where $u$ can be replaced by $\pm u + C$ for any constant $C$, 
and $-\infty < k^2 < \infty$ means that the modulus $k$ can 
be any real or pure imaginary number. 

Note that the harmonic map equation for \eqref{gm:trsws2} is 
\begin{equation}\label{eq:hm-eq-sc}
(s'c -sc')' + sc =0 \quad (\text{with }c^2+s^2=1),   
\end{equation}
which has the solution 
\begin{equation}\label{sol:hm-eq-wS}
 s = k \sn (u,k), \quad c = \pm \dn (u,k)  \qquad (0 \le k < \infty), 
\end{equation}
where $u$ can be replaced by $\pm u + C$ for any constant $C$. 

Note also that each harmonic map equation for \eqref{gm:srswl} or for  
\eqref{gm:trswl} is
\begin{equation}\label{eq:hm-eq-wL}
 (\sigma' \gamma - \sigma \gamma')' + (\sigma -\gamma)^2 =0 
\quad (\text{with } \gamma^2-\sigma^2=1). 
\end{equation}
With the setting 
\begin{equation*}
 \sigma = \frac{1}{2} \left( \phi - \frac{1}{\phi} \right), \ 
 \gamma = \frac{1}{2} \left( \phi + \frac{1}{\phi} \right),   
\end{equation*}
the equation \eqref{eq:hm-eq-wL} is equivalent to 
\begin{equation}\label{eq:hm-eq-wL'}
 \phi \phi'' - (\phi')^2 +1 =0, 
\end{equation}
which has solutions 
\begin{equation}\label{sol:hm-eq-wL}
\phi =u \quad \text{or} \quad 
\phi = \frac{\sin ku}{k},  \qquad (-\infty < k^2 < \infty, k \ne 0), 
\end{equation}
where $u$ can be replaced by $\pm u + C$ for any constant $C$.



Without loss of generality, the value of mean curvature $H$ is assumed to be $-1/2$.  
As an application of \eqref{gm:srswt}--\eqref{sol:hm-eq-wL} and 
Corollary \ref{cor:cmc-case}, we have simple and explicit expressions of 
Lorentzian Delaunay surfaces as follows: 
\begin{itemize}
 \item Delaunay surfaces with timelike axis
\begin{itemize}
 \item spacelike ones
\begin{equation}\label{eq:sDSwT}
\xbf(u,v) =
\begin{pmatrix}
 e^{\iii v} & 0 \\ 0 & 1
\end{pmatrix}
\left\{  
\begin{pmatrix}
 \sigma \\ \gamma
\end{pmatrix}
+ 
\begin{pmatrix}
 \sigma'/\gamma \\
\int \sigma^2
\end{pmatrix}
\right\} \in \C \times \R \cong \L^3
\end{equation}
 \item timelike ones
\begin{equation}\label{eq:tDSwT}
\xbf(u,v)=
\begin{pmatrix}
 e^{\iii v} & 0 \\ 0 & 1
\end{pmatrix}
\left\{  
\begin{pmatrix}
 \gamma \\ \sigma
\end{pmatrix}
+ 
\begin{pmatrix}
 -\sigma'/\gamma \\
-\int \gamma^2
\end{pmatrix}
\right\} \in \C \times \R \cong \L^3, 
\end{equation} 
\end{itemize}
 \item Delaunay surfaces with spacelike axis
\begin{itemize}
 \item spacelike ones
\begin{equation}\label{eq:sDSwS}
\xbf(u,v)=
\begin{pmatrix}
 1 & 0 \\ 0 & e^{- \jjj v} 
\end{pmatrix}
\left\{
\begin{pmatrix}
 \sigma \\ \jjj \gamma
\end{pmatrix}
+ 
\begin{pmatrix}
- \int \gamma^2 \\ - \jjj \sigma'/\gamma
\end{pmatrix}
\right\}\in \R \times \check \C \cong \L^3
\end{equation}
 \item timelike ones of the first kind
\begin{equation}\label{eq:tDSwS1}
\xbf(u,v)=
\begin{pmatrix}
 1 & 0 \\ 0 & e^{- \jjj v} 
\end{pmatrix}
\left\{  
\begin{pmatrix}
 \gamma \\ \jjj \sigma
\end{pmatrix}
+ 
\begin{pmatrix}
\int \sigma^2  \\ \jjj \sigma'/\gamma 
\end{pmatrix}
\right\} \in \R \times \check \C \cong \L^3
\end{equation}
 \item timelike ones of the second kind
\begin{equation}\label{eq:tDSwS2}
\xbf(u,v)=
\begin{pmatrix}
 1 & 0 \\ 0 & e^{-\jjj v} 
\end{pmatrix}
\left\{  
\begin{pmatrix}
 s \\ c 
\end{pmatrix}
+ 
\begin{pmatrix}
- \int c^2 \\ - s'/ c  
\end{pmatrix}
\right\} \in \R \times \check \C \cong \L^3, 
\end{equation}
\end{itemize}
 \item Delaunay surfaces with lightlike axis
\begin{itemize}
 \item spacelike ones
\begin{equation}\label{eq:sDSwL}
\xbf(u,v) = 
\exp vA \cdot 
\left\{
\left(
\begin{matrix}
  \frac{1}{2} (\phi - \frac{1}{\phi}) \\ 0 \\  \frac{1}{2} (\phi + \frac{1}{\phi})
\end{matrix}\right)
- \int 
\left(
\begin{matrix}
 \frac{1}{2} ( 1+ \frac{1}{\phi^2}) \\ 0 \\  \frac{1}{2} (1-\frac{1}{\phi^2})
\end{matrix}\right) 
\right\}
\end{equation}
 \item timelike ones
\begin{equation}\label{eq:tDSwL}
\xbf(u,v) = 
\exp vA \cdot   
\left\{
\left(\begin{matrix}
 \frac{1}{2} (\phi + \frac{1}{\phi}) \\ 0 \\ \frac{1}{2} (\phi - \frac{1}{\phi})
\end{matrix}\right)
- \int 
\left(\begin{matrix}
\frac{1}{2}( 1- \frac{1}{\phi^2}) \\ 0 \\ \frac{1}{2} (1+\frac{1}{\phi^2})
\end{matrix}\right) 
\right\}, 
\end{equation}
\end{itemize}
\end{itemize}
where $(\sigma,\gamma)=(\sigma(u),\gamma(u))$, 
$(s,c)=(s(u),c(u))$ and $\phi = \phi(u)$ are solutions to 
\eqref{eq:hm-eq}, \eqref{eq:hm-eq-sc} and \eqref{eq:hm-eq-wL'}, respectively, 
i.e., functions given in \eqref{sol:hm-eq}, \eqref{sol:hm-eq-wS} and 
\eqref{sol:hm-eq-wL}. 
\begin{remark}
\begin{enumerate}
 \item In \eqref{eq:sDSwT}--\eqref{eq:tDSwS2}, 
each term including an integral can be also expressed by Jacobi's elliptic functions 
and the elliptic integral $E$ of the second kind: 
 \begin{align*}
& \int \sigma^2 du = 
\begin{cases}
- \left\{ \frac{ \cn \dn}{\sn} + E \circ \am \right\} (u,k) & 
\text{(if $0 \le k \le1$)} \\
(k^2-1)u -k 
\left\{ \frac{ \cn \dn}{\sn} + E \circ \am \right\} (ku, 1/k)
& \text{(if $k>1$)} \\
-\alpha \left\{ \frac{ \cn \dn}{\sn} + E \circ \am \right\} (\alpha u, \beta) 
& \text{(if $k \in \iii \R$)} , 
\end{cases} 
\intertext{where $\alpha=\sqrt{1-k^2}$, $\beta=\sqrt{-k^2/(1-k^2)}$, } 
& \int \gamma^2 du = u + \int \sigma^2 du,  \\
& \int c^2 du = 
\begin{cases}
 E \circ \am (u,k) & \text{if $0 \le k <1$} \\
(1-k^2)u + k E \circ \am (ku, 1/k) & \text{if $k > 1$.}
\end{cases}
\end{align*}

On the other hand, 
all terms in \eqref{eq:sDSwL}, \eqref{eq:tDSwL} are elementary functions.

 \item 
Some of the Lorentzian Delaunay surfaces have conical singularities,  
namely,
it can happen that $\xbf$ is not a regular surface.  
\end{enumerate}
\end{remark}
\begin{remark}\label{rem:gc-property}
\begin{enumerate}
 \item
Observing \eqref{eq:sDSwT} and \eqref{eq:tDSwS1}, one notices that 
the generating curves of a spacelike Delaunay surface with timelike axis and 
a timelike Delaunay surface with spacelike axis of the first kind  
are coincident, after the reflection $R : x \leftrightarrow z$ 
in the $xz$-plane with respect to the line $x=z$. Note that $R \not\in O(2,1)$. 
In other words, if a curve 
$\Gamma(u)=(\sigma + \sigma'/\gamma, 0 , \gamma + \int \sigma^2 du)$ is given in 
the $xyz$-space, then the trajectory of $\Gamma(u)$ by 
$\begin{pmatrix}
e^{\iii v} & 0 \\ 0 & 1
\end{pmatrix}$ 
is a spacelike Delaunay surface with timelike axis in $\L^3$, 
whereas  
the trajectory of $\Gamma(u)$ by 
$\begin{pmatrix}
1 & 0 \\ 0 & e^{-\jjj v}
\end{pmatrix}
\circ R$ 
is a timelike Delaunay surface with spacelike axis in $\L^3$. 
The same phenomenon holds for cgc-companions $\check \xbf$ 
of \eqref{eq:sDSwT} and \eqref{eq:tDSwS1}. 

We note that the situation stated above also occurs for the pair 
\eqref{eq:tDSwT} and \eqref{eq:sDSwS}.     

 \item 
The profile curve of a timelike Delaunay surface with spacelike axis 
of the second kind
is the same as for a Delaunay surface in $\E^3$ 
(ignoring the underlying Riemannian or Lorentzian structure). 
See Figures \ref{fig:gc-d-1}--\ref{fig:gc-d-4}.  
\end{enumerate}
\end{remark}

\begin{figure}[htbp]
\begin{tabular}{cc}
\begin{minipage}{0.45\hsize}
\centering
 \includegraphics[width=\textwidth,clip]{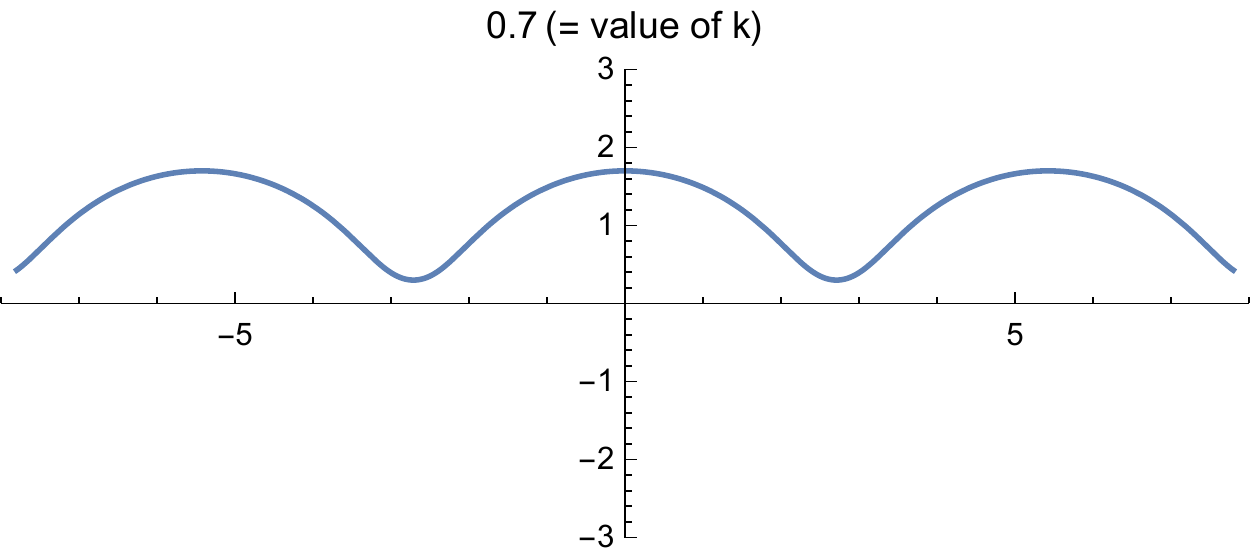}  
\caption{a profile curve ($0<k<1$)}
\label{fig:gc-d-1}
\end{minipage}
\quad
\begin{minipage}{0.45\hsize}
\centering
 \includegraphics[width=\textwidth,clip]{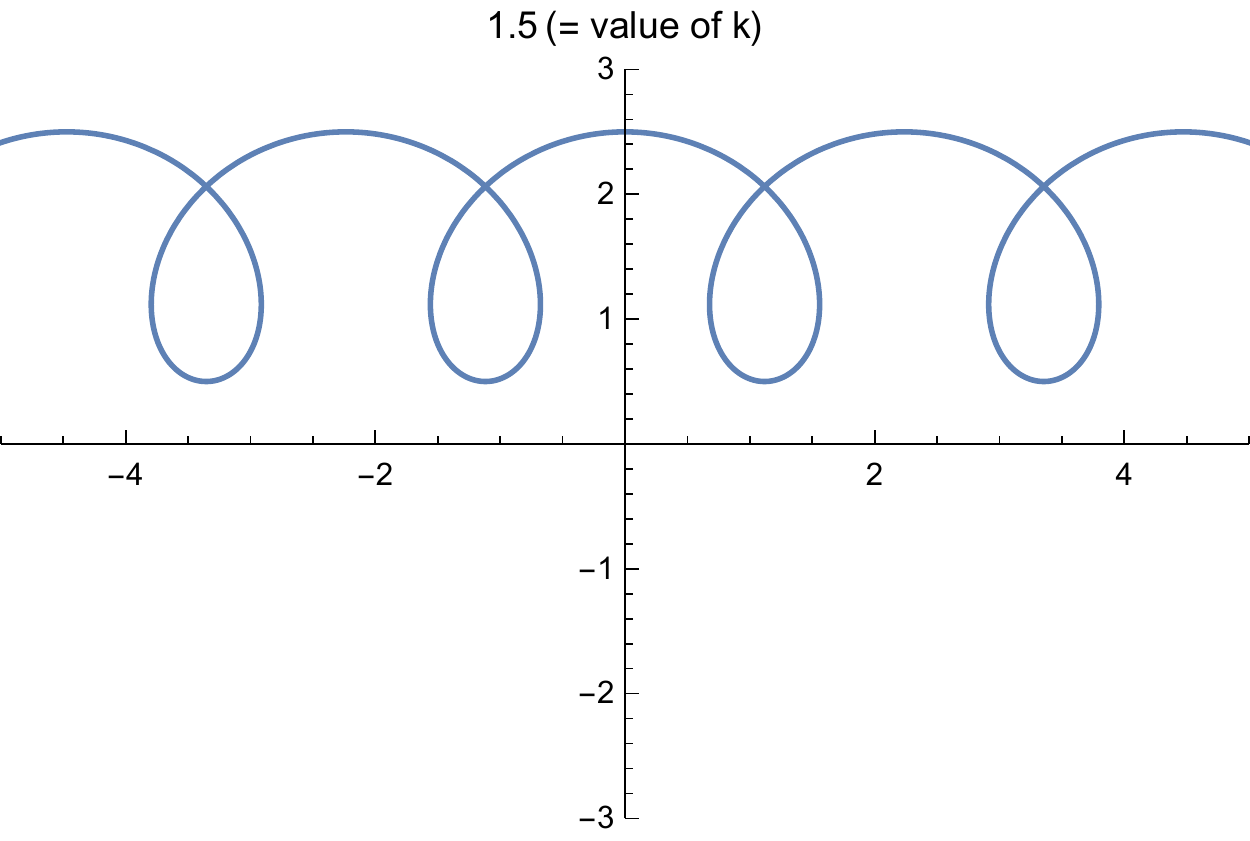} 
\caption{a profile curve ($k>1$)}
\label{fig:gc-d-2}
\end{minipage}
\end{tabular}
\end{figure}

\begin{figure}[htbp]
\begin{tabular}{cc}
\begin{minipage}{0.45\hsize}
\centering
 \includegraphics[width=\textwidth,clip]{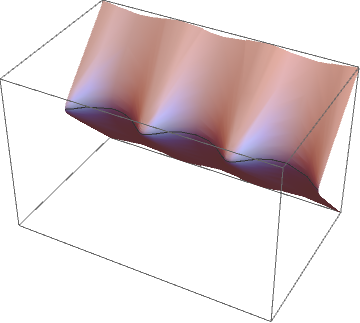}  
\caption{a timelike Delaunay surface with spacelike axis ($0<k<1$)}
\label{fig:gc-d-3}
\end{minipage}
\quad
\begin{minipage}{0.45\hsize}
\centering
 \includegraphics[width=\textwidth,clip]{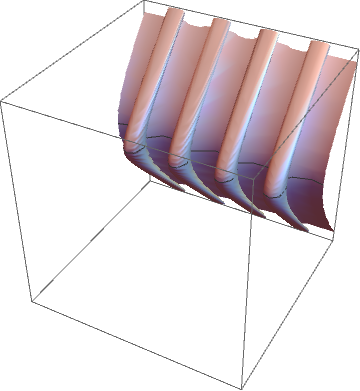} 
\caption{a timelike Delaunay surface with spacelike axis ($k>1$)}
\label{fig:gc-d-4}
\end{minipage}
\end{tabular}
\end{figure}

For previous works on details for Lorentzian Delaunay surfaces, 
we refer to \cite{HN}, \cite{H}, \cite{IH}, \cite{LV1}, \cite{LV2}, \cite{L0}, \cite{L} and so on.

\section{cmc helicoids}\label{sec:cmc-helicoids}
We call a helicoidal surface with non-zero constant mean curvature $H$ in $\E^3$
a {\it cmc-$H$ helicoid}. Cmc-$H$ helicoids were studied and 
already classified  by do Carmo-Dajczer \cite{doCD}. However, in this section, 
we investigate them again by another approach 
using the Kenmotsu formula \eqref{eq:cmc-H-immersion}. 

\subsection{Formulation}
We consider helicoidal motions in $\E^3$. (A helicoidal motion is also called a 
screw motion.) 
Without loss of generality, 
we always suppose the axis of any helicoidal motion is the $z$-axis. Thus 
a group of \textit{helicoidal} motions of \textit{pitch} $\lambda$ is   
a one-parameter group of Euclidean motions 
\begin{equation*}
M_{\lambda}(t) \colon 
\begin{pmatrix}
 x \\ y \\ z
\end{pmatrix}
\mapsto 
 \begin{pmatrix}
  \cos t & - \sin t & 0 \\ \sin t & \cos t & 0 \\ 0 & 0 & 1
 \end{pmatrix}
\begin{pmatrix}
 x \\ y \\ z
\end{pmatrix}
+
\begin{pmatrix}
 0 \\ 0 \\ \lambda t
\end{pmatrix}
\end{equation*}
for a real constant $\lambda$. 
Note that a helicoidal motion of pitch $0$ is a rotational motion. 

A trajectory of a space curve $\cbf \colon I \to \E^3$ by a group of helicoidal motions 
of pitch $\lambda$ is called 
a \textit{helicoidal surface of pitch $\lambda$ with generating curve} $\cbf$.  
A generating curve is not unique for a helicoidal surface. 
We can 
choose a generating curve $\cbf(u)={}^t(x(u),y(u),z(u))$  
so that $(u,v) \mapsto M_{\lambda}(v) \cbf(u)$ is an orthogonal parametrization.  
Then, the first fundamental form is given by 
\begin{equation*}
 \fff =\{ (x')^2+(y')^2 +(z')^2 \}du^2 + \{\lambda^2 + x^2 + y^2 \} dv^2 . 
\end{equation*}
Moreover, after a suitable parameter change of $u$, we have an  
isothermal parametrization $(u,v) \mapsto M_{\lambda}(v)\cbf(u)$, that is, 
$\fff$ is proportional to $du^2 + dv^2$. We suppose this parametrization 
on any helicoidal surface $\xbf = M_{\lambda}(v) \cbf(u)$. 
The Gauss map is defined to be $\nbf = (\xbf_u \times \xbf_v)/|\xbf_u|$.  
It has the form 
\begin{equation}\label{eq:gmap-of-heli}
 \nbf = 
 \begin{pmatrix}
  \cos v & - \sin v & 0 \\ \sin v & \cos v & 0 \\ 0 & 0 & 1
 \end{pmatrix}
\begin{pmatrix}
 \alpha(u) \\ \beta(u) \\ \gamma(u)
\end{pmatrix} , \quad 
\alpha^2 + \beta^2 + \gamma^2 =1. 
\end{equation}

For simplicity, we frequently identify $\E^3$ with $\C \times \R$ 
similarly to the identification in Section \ref{sec:delaunay}. 
For example, \eqref{eq:gmap-of-heli} is also expressed as    
\begin{equation}\label{eq:gmap-of-heli-cpx}
\nbf = 
\begin{pmatrix}
 e^{\iii v} & 0 \\ 0 & 1
\end{pmatrix}
 \begin{pmatrix}
  X \\ \gamma
 \end{pmatrix}\in \C \times \R = \E^3 , \quad 
|X|^2 + \gamma^2 =1, \ (X= \alpha + \iii \beta). 
\end{equation}
The following formulas will often be needed. 
\begin{lemma}
For  $\boldsymbol x = \left(\begin{smallmatrix}
	 X \\ x
	\end{smallmatrix}\right), 
\ybf = \left(\begin{smallmatrix}
	 Y \\ y
	\end{smallmatrix}\right)
\in \C \times \R \cong \E^3$,
\begin{equation*}
 \la \boldsymbol x , \ybf \ra = \Re(X \bar Y) + xy, \quad 
\boldsymbol x \times \ybf = 
\begin{pmatrix}
 -\iii 
\left|
\begin{smallmatrix}
 X & Y \\ x & y
\end{smallmatrix}
\right| \\ 
\Im (\bar X Y)
\end{pmatrix}.
\end{equation*}
\end{lemma}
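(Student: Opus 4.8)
The plan is to prove both identities by a direct computation in real coordinates, unpacking the identification $\C \times \R \cong \E^3$. Writing $\xbf = {}^t(a_1,a_2,a_3)$ and $\ybf = {}^t(b_1,b_2,b_3)$, this identification sends the pair $(X,x)$ to $\xbf$ via $X = a_1 + \iii a_2$ and $x = a_3$ (and similarly $Y = b_1 + \iii b_2$, $y = b_3$). Everything then reduces to matching the real and imaginary parts of the complex-valued expressions on the right against the components of the Euclidean inner product and vector product recalled in Section~\ref{sec:Ken-formula}.

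First I would treat the inner product. Expanding $X \bar Y = (a_1 + \iii a_2)(b_1 - \iii b_2)$ and taking the real part gives $\Re(X \bar Y) = a_1 b_1 + a_2 b_2$; adding $xy = a_3 b_3$ reproduces $\la \xbf, \ybf \ra = a_1 b_1 + a_2 b_2 + a_3 b_3$ exactly. This step is immediate.

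Next I would verify the vector product componentwise. The first two components of $\xbf \times \ybf$ are $a_2 b_3 - b_2 a_3$ and $a_3 b_1 - b_3 a_1$, which I would assemble into a single complex number and compare with $-\iii(Xy - Yx) = -\iii \left| \begin{smallmatrix} X & Y \\ x & y \end{smallmatrix} \right|$; expanding the latter and separating real and imaginary parts yields precisely those two components. The third component $a_1 b_2 - b_1 a_2$ is then matched against $\Im(\bar X Y)$, obtained by expanding $\bar X Y = (a_1 - \iii a_2)(b_1 + \iii b_2)$ and reading off its imaginary part.

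There is no genuine obstacle here: the lemma is a bookkeeping identity whose proof is a line or two of algebra once the coordinate dictionary is fixed. The only point demanding care is consistency of the sign and ordering conventions --- in particular keeping the $2 \times 2$ determinant $\left| \begin{smallmatrix} X & Y \\ x & y \end{smallmatrix} \right|$ oriented so that its $-\iii$ multiple lands on the correct pair of components, and making sure the conjugation in $\Im(\bar X Y)$ (rather than $\Im(X \bar Y)$) produces the right sign for the third component.
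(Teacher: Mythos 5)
Your computation is correct and is exactly the routine coordinate verification the paper leaves implicit (the lemma is stated without proof). The dictionary $X=a_1+\iii a_2$, $x=a_3$ is the right one, and your check that $-\iii(Xy-Yx)=(a_2b_3-b_2a_3)+\iii(a_3b_1-b_3a_1)$ together with $\Im(\bar X Y)=a_1b_2-b_1a_2$ reproduces the components of $\times_E$ as defined in Section~\ref{sec:Ken-formula} settles the matter.
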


First of all, we find the harmonic map equation for the map $\nbf$ given by 
\eqref{eq:gmap-of-heli-cpx}. 
Paying attention to the isothermality of $(u,v)$, we have
\begin{align*}
 d \nbf &= 
\begin{pmatrix}
 e^{\iii v} & 0 \\ 0 & 1
\end{pmatrix}
\left\{
 \begin{pmatrix}
  X' \\ \gamma'
 \end{pmatrix} du 
+
 \begin{pmatrix}
  \iii X \\ 0
 \end{pmatrix} dv 
\right\}, \\
* d \nbf &= 
\begin{pmatrix}
 e^{\iii v} & 0 \\ 0 & 1
\end{pmatrix}
\left\{
- \begin{pmatrix}
  \iii X \\ 0
 \end{pmatrix} du 
+
 \begin{pmatrix}
  X' \\ \gamma'
 \end{pmatrix} dv
\right\} , \\ 
 \nbf \times (* d \nbf) &= 
\begin{pmatrix}
 e^{\iii v} & 0 \\ 0 & 1
\end{pmatrix}
\left\{
\begin{pmatrix}
 X \gamma \\ -|X|^2
\end{pmatrix}du 
+
\begin{pmatrix}
 \iii \{ X' \gamma - X \gamma' \}  \\ 
 \Im (X' \bar X)
\end{pmatrix}dv
\right\}. 
\end{align*}
It follows that
\begin{equation*}
 d \nbf + \nbf \times (* d \nbf) = 0 \iff 
 \begin{cases}
  X' + X \gamma =0 \\ 
\gamma' -1 + \gamma^2 =0 \\ 
X + X' \gamma - X \gamma' =0 \\ 
 \Im (X' \bar X) =0
 \end{cases}
\iff
 \begin{cases}
  X' + X \gamma =0 \\ 
\gamma' = 1 - \gamma^2  
 \end{cases}. 
\end{equation*}
We have noted that the equation $d \nbf + \nbf \times (* d \nbf) = 0$ is 
a condition that $H=0$, in other words, 
$\nbf$ is an orientation-reversing conformal map.    

On the other hand, 
\begin{align*}
d * d \nbf &=
\begin{pmatrix}
 e^{\iii v} & 0 \\ 0 & 1
\end{pmatrix}
\left\{
 \begin{pmatrix}
  X'' \\ \gamma''
 \end{pmatrix}
-
 \begin{pmatrix}
  X \\ 0
 \end{pmatrix}
\right\} du \wedge dv,   
\end{align*}
and hence $d * d \nbf$ is proportional to $\nbf$ if and only if 
\begin{align*}
 \begin{vmatrix}
  X'' -X & X \\ \gamma'' & \gamma
 \end{vmatrix}=0, \quad 
\Im ( \overline{X''-X} )X =0, 
\end{align*}
equivalently, 
\begin{equation*}
 (X' \gamma -X \gamma')' = X \gamma, \quad \Im (X' \bar X) =c_1(=\text{constant}). 
\end{equation*}
We have thus the following lemma.
\begin{lemma}\label{lem:de-gm-heli}
 Let a map $\nbf \colon (U, [du^2+dv^2]) \to S^2 \subset \C \times \R \cong \E^3$ 
be given in the form  
\begin{equation}\label{eq:n:UtoS2}
 \nbf = 
\begin{pmatrix}
 e^{\iii v} & 0 \\ 0 & 1
\end{pmatrix}
\begin{pmatrix}
 X(u) \\ \gamma(u) 
\end{pmatrix}, 
\quad |X|^2 +\gamma^2 =1. 
\end{equation}
Then the map $\nbf$ is 
\begin{itemize}
 \item orientation-reversing and conformal if and only if 
\begin{equation}\label{conf-eq-toS2}
  X' + X \gamma =0, \quad
\gamma' = 1 - \gamma^2,   
\end{equation}
\item
harmonic if and only if
\begin{equation}\label{hm-eq-toS2}
 (X' \gamma -X \gamma')' = X \gamma, \quad \Im (X' \bar X) =c_1(=\text{constant}). 
\end{equation}
\end{itemize}
\end{lemma}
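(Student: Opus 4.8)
The plan is to deduce both equivalences from two general principles together with a direct substitution of the explicit form \eqref{eq:n:UtoS2} into the relevant differential expressions. The first principle is that a map $\nbf$ into $S^2$ from a Riemann surface is orientation-reversing and conformal if and only if $d\nbf + \nbf\times(*d\nbf)=0$; this is exactly the $H=0$ instance of the pre-formula \eqref{eq:unified-pre-formula}, and it reflects the fact that $w\mapsto\nbf\times w$ is rotation by a right angle (the complex structure $J$) on the tangent plane $T_{\nbf}S^2$, so the equation reads $*d\nbf = J\,d\nbf$, the anti-holomorphicity condition. The second principle, already recorded just after Corollary \ref{cor:cmc-case}, is that $\nbf$ is harmonic if and only if $d*d\nbf$ is parallel to $\nbf$. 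Granting these, each item of the lemma becomes a computation.

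For the conformal part, I would substitute \eqref{eq:n:UtoS2} into $d\nbf + \nbf\times(*d\nbf)$, using the isothermal relations $*du=dv$, $*dv=-du$ and the cross-product formula of the preceding lemma to evaluate $\nbf\times(*d\nbf)$ in the $\C\times\R$ model. Collecting the coefficients of $du$ and $dv$ produces four scalar conditions: the $du$-part gives $X'+X\gamma=0$ and $\gamma'=1-\gamma^2$, while the $dv$-part gives $X+X'\gamma-X\gamma'=0$ and $\Im(X'\bar X)=0$. The final step is to observe that the latter pair is an algebraic consequence of the former: substituting $X'=-X\gamma$ and $\gamma'=1-\gamma^2$ makes $X+X'\gamma-X\gamma'$ and $\Im(X'\bar X)=\Im(-\gamma|X|^2)$ vanish identically. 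This yields \eqref{conf-eq-toS2}.

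For the harmonic part, I would compute $d*d\nbf$ in the same model; it equals $\left(\begin{smallmatrix} e^{\iii v} & 0 \\ 0 & 1\end{smallmatrix}\right)\,{}^t(X''-X,\,\gamma'')\,du\wedge dv$. By the cross-product formula, parallelism of $d*d\nbf$ to $\nbf$ (equivalently $\nbf\times d*d\nbf=0$) amounts to the vanishing of the determinant $(X''-X)\gamma - X\gamma''$ and of $\Im(\overline{X''-X}\,X)$. I would then rewrite these using the identities $(X'\gamma-X\gamma')' = X''\gamma - X\gamma''$ and $\tfrac{d}{du}\Im(X'\bar X) = \Im(X''\bar X)$, together with $\Im(\bar X X)=0$, to convert them into $(X'\gamma-X\gamma')'=X\gamma$ and $\Im(X'\bar X)=c_1$, which is \eqref{hm-eq-toS2}.

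The computations are mechanical once the model formulas are in hand, so I do not expect a real obstacle in them. The two points that require care are conceptual rather than computational: first, fixing the orientation and sign conventions so that $d\nbf+\nbf\times(*d\nbf)=0$ is correctly identified with \emph{orientation-reversing} conformality (an error here only flips a sign but would misstate the lemma); and second, noting that harmonicity and the condition $d*d\nbf\parallel\nbf$ are conformally invariant, so that it is legitimate to carry out the computation with the isothermal representative $du^2+dv^2$ of the conformal class $[du^2+dv^2]$ on $U$.
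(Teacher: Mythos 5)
Your proposal is correct and follows essentially the same route as the paper: the paper also characterizes orientation-reversing conformality by $d\nbf+\nbf\times(*d\nbf)=0$ and harmonicity by $d*d\nbf\parallel\nbf$, then substitutes the explicit form of $\nbf$ and reduces the resulting scalar conditions exactly as you describe (including the observation that the $dv$-part conditions follow from the $du$-part ones). No gaps.
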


Lemma \ref{lem:de-gm-heli} leads to the following proposition.
\begin{proposition}\label{prop:pre-cmc-helicoid}
Let 
\begin{equation*}
 \nbf = 
\begin{pmatrix}
 e^{\iii v} & 0 \\ 0 & 1
\end{pmatrix}
\begin{pmatrix}
 X(u) \\ \gamma(u)
\end{pmatrix}
: (U, [du^2+dv^2]) \to S^2 \subset \C \times \R =\E^3
\end{equation*}
be a harmonic map satisfying $d \nbf + \nbf \times * d\nbf \ne 0$. 
The cmc-$H$ surface whose Gauss map is $\nbf$ is given by 
\begin{equation*}
-2 H \xbf = 
\begin{pmatrix}
 e^{\iii v} & 0 \\ 0 & 1
\end{pmatrix}
\left\{ 
\begin{pmatrix}
 X \\ \gamma
\end{pmatrix}
+
\begin{pmatrix}
 X' \gamma -X \gamma' \\ \int (\gamma^2 -1)du
\end{pmatrix}
\right\} 
+ 
\begin{pmatrix}
 0 \\ c_1 v
\end{pmatrix}, 
\end{equation*}
where $c_1 = \Im (\bar X X') (=\text{constant})$. 

The cgc-companion $\check \xbf$ of $\xbf$ is 
\begin{equation*}
\begin{pmatrix}
 e^{\iii v} & 0 \\ 0 & 1
\end{pmatrix}
\left\{ 
\begin{pmatrix}
 X' \gamma -X \gamma' \\ \int (\gamma^2 -1)du
\end{pmatrix}
\right\} 
+ 
\begin{pmatrix}
 0 \\ c_1 v
\end{pmatrix} . 
\end{equation*}
\end{proposition}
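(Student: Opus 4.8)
The plan is to apply the converse part of Corollary~\ref{cor:cmc-case}. That corollary guarantees that a cmc-$H$ surface with prescribed harmonic Gauss map $\nbf$ is recovered from the Kenmotsu-type formula \eqref{eq:cmc-H-immersion}, which here reads $-2H\xbf = \nbf + \int \nbf\times(*d\nbf)$, and it simultaneously certifies that the resulting $\xbf$ has Gauss map $\nbf$ and mean curvature $H$. Since $\nbf$ is assumed harmonic and satisfies $d\nbf + \nbf\times *d\nbf \ne 0$, precisely the hypotheses of Corollary~\ref{cor:cmc-case} hold, so all that remains is to integrate the $1$-form $\omega := \nbf\times(*d\nbf)$ explicitly.

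First I would reuse the expression for $\nbf\times(*d\nbf)$ already computed just above Lemma~\ref{lem:de-gm-heli}, namely
\[
\omega = \begin{pmatrix} e^{\iii v} & 0 \\ 0 & 1\end{pmatrix}\left\{\begin{pmatrix} X\gamma \\ -|X|^2\end{pmatrix}du + \begin{pmatrix}\iii(X'\gamma - X\gamma')\\ \Im(X'\bar X)\end{pmatrix}dv\right\},
\]
and integrate it componentwise in the splitting $\C\times\R$. For the $\R$-component I would substitute $|X|^2 = 1-\gamma^2$ from \eqref{eq:n:UtoS2} and use the second harmonic map equation in \eqref{hm-eq-toS2}, which states exactly that $\Im(X'\bar X) = c_1$ is a constant; the component is then the closed $1$-form $(\gamma^2-1)\,du + c_1\,dv$, whose primitive is $\int(\gamma^2-1)\,du + c_1 v$.

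The heart of the argument is the $\C$-component $e^{\iii v}X\gamma\,du + \iii e^{\iii v}(X'\gamma - X\gamma')\,dv$. Here I would try the ansatz $F = e^{\iii v}G(u)$, for which $dF = e^{\iii v}G'\,du + \iii e^{\iii v}G\,dv$; matching the $dv$-coefficient forces $G = X'\gamma - X\gamma'$, and matching the $du$-coefficient then requires $G' = X\gamma$, i.e. $(X'\gamma - X\gamma')' = X\gamma$. This is precisely the first harmonic map equation in \eqref{hm-eq-toS2}, so the ansatz is consistent and the $\C$-component integrates to $e^{\iii v}(X'\gamma - X\gamma')$. This is the step where harmonicity is indispensable, and it is also where the abstract integrability granted by Corollary~\ref{cor:cmc-case} becomes concretely visible as the closedness of $\omega$. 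I expect this matching to be the main (though short) obstacle, since one must verify that the consistency condition $G'=X\gamma$ is supplied automatically by \eqref{hm-eq-toS2} and that no stray integration constant spoils the periodic factor $e^{\iii v}$.

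Assembling the two components gives
\[
\int\omega = \begin{pmatrix} e^{\iii v} & 0 \\ 0 & 1\end{pmatrix}\begin{pmatrix} X'\gamma - X\gamma'\\ \int(\gamma^2-1)\,du\end{pmatrix} + \begin{pmatrix}0\\c_1 v\end{pmatrix},
\]
and adding $\nbf = \left(\begin{smallmatrix}e^{\iii v}&0\\0&1\end{smallmatrix}\right)\left(\begin{smallmatrix}X\\\gamma\end{smallmatrix}\right)$ yields the stated formula for $-2H\xbf$. Finally, for the cgc-companion I would invoke the relation $\check\xbf = \xbf + \frac{1}{2H}\nbf$ of \eqref{eq:lelieuvre-type}: subtracting the Gauss-map term simply deletes $\left(\begin{smallmatrix}X\\\gamma\end{smallmatrix}\right)$ from the bracket, so that $-2H\check\xbf = \int\omega$, which (up to the normalizing factor $-2H$) is the displayed expression for $\check\xbf$.
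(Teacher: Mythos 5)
Your proposal is correct and follows essentially the same route as the paper: both reduce the claim to integrating $\nbf\times(*d\nbf)$, use the harmonic map equations \eqref{hm-eq-toS2} to recognize the $\C$-component as $d\{e^{\iii v}(X'\gamma-X\gamma')\}$ and the $\R$-component as $(\gamma^2-1)\,du+c_1\,dv$, and then conclude via Corollary \ref{cor:cmc-case}. The only cosmetic difference is that you arrive at the exact $1$-form via an ansatz while the paper substitutes $(X'\gamma-X\gamma')'$ for $X\gamma$ directly before integrating.
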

\begin{proof}
 By \eqref{hm-eq-toS2}, we have 
\begin{align*}
 \nbf \times (* d \nbf) &= 
\begin{pmatrix}
 e^{\iii v} & 0 \\ 0 & 1
\end{pmatrix}
\left\{
\begin{pmatrix}
 (X' \gamma - X \gamma')' \\  \gamma^2-1
\end{pmatrix}du 
+
\begin{pmatrix}
 \iii (X' \gamma - X \gamma') \\ 
 c_1
\end{pmatrix} dv
\right\} \\ 
&=
\begin{pmatrix}
d \{ e^{\iii v} (X' \gamma - X \gamma') \} \\ (\gamma^2-1)du +c_1 dv
\end{pmatrix}. 
\end{align*}
Thus 
\begin{align*}
 \int \nbf \times (* d \nbf) &= 
\begin{pmatrix}
 e^{\iii v} (X' \gamma - X \gamma') \\ \int (\gamma^2-1)du +c_1 v
\end{pmatrix}
=
\begin{pmatrix}
 e^{\iii v} & 0 \\ 0 & 1
\end{pmatrix}
\begin{pmatrix}
 X' \gamma - X \gamma' \\ \int (\gamma^2 -1)du
\end{pmatrix}
+ 
\begin{pmatrix}
 0 \\ c_1 v
\end{pmatrix}.
\end{align*}
Hence the assertion follows from Corollary \ref{cor:cmc-case}. 
\end{proof}

The cmc-$H$ surface obtained in Proposition \ref{prop:pre-cmc-helicoid} is, 
of course, a cmc-$H$ helicoid. 

\subsection{Parametrization by the explicit solution}
We wish to find an explicit description of a  
harmonic map $\nbf$ which is not orientation-reversing conformal. 
For this purpose, we first determine the solution to the 
orientation-reversing conformal map equation \eqref{conf-eq-toS2}. 
\begin{lemma}
Let a map $\nbf \colon (U, [du^2+dv^2]) \to S^2$ 
be given in the form \eqref{eq:n:UtoS2}. Then the map $\nbf$ 
solves the equation \eqref{conf-eq-toS2}  
if and only if 
\begin{equation}\label{eq:conf-map-sol}
 \nbf = 
\begin{pmatrix}
 0 \\ \pm 1
\end{pmatrix}
\text{ or }
\begin{pmatrix}
 e^{\iii v} & 0 \\ 0 & 1
\end{pmatrix}
\begin{pmatrix}
1/ \cosh u \\ \tanh u 
\end{pmatrix}
\end{equation}  
up to a translation $(u,v) \mapsto (u+c_1, v+c_2)$ in the $uv$-plane. 
\end{lemma}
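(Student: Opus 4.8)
The plan is to integrate the first-order system \eqref{conf-eq-toS2} directly, treating it as a system of ODEs in $u$; the rotation variable $v$ enters \eqref{eq:n:UtoS2} only through the overall factor and plays no role in \eqref{conf-eq-toS2}. The key structural observation is that the second equation $\gamma' = 1 - \gamma^2$ decouples: since $\gamma$ is real-valued, it is an autonomous scalar ODE that can be solved on its own, and the constraint $|X|^2 + \gamma^2 = 1$ from \eqref{eq:n:UtoS2} then pins down $X$ up to a constant phase.

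First I would solve $\gamma' = 1 - \gamma^2$. The constant solutions are $\gamma \equiv 1$ and $\gamma \equiv -1$; for each of these the constraint forces $|X|^2 = 1 - \gamma^2 = 0$, hence $X \equiv 0$, which yields the two exceptional maps $\nbf = {}^t(0, \pm 1)$ in \eqref{eq:conf-map-sol}. For a non-constant solution I separate variables. Since $|X|^2 = 1 - \gamma^2 \ge 0$ forces $|\gamma| \le 1$, the branch giving $\gamma = \coth(u+c)$ (where $|\gamma| > 1$) is excluded, and only $\gamma = \tanh(u + c)$ survives. After the permitted translation $u \mapsto u + c$ I may take $\gamma = \tanh u$, so that $|X|^2 = 1 - \tanh^2 u = 1/\cosh^2 u$.

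With $\gamma = \tanh u$ fixed, the first equation $X' = -X \tanh u$ is a scalar linear ODE for the complex function $X$; integrating the factor $-\int \tanh u \, du = -\log \cosh u$ gives $X = X_0/\cosh u$ for a complex constant $X_0$. Comparing with $|X| = 1/\cosh u$ shows $|X_0| = 1$, so $X_0 = e^{\iii \theta_0}$ for some real $\theta_0$. This residual phase is exactly absorbed by a translation $v \mapsto v + \theta_0$, because $e^{\iii v} e^{\iii \theta_0} = e^{\iii(v + \theta_0)}$, producing the normalized form in \eqref{eq:conf-map-sol}. The converse, that both expressions in \eqref{eq:conf-map-sol} satisfy \eqref{conf-eq-toS2}, is a direct substitution and requires no separate argument.

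I expect no serious obstacle here; the one point requiring care is bookkeeping the two integration constants so that they match the claimed translation freedom $(u,v)\mapsto(u+c_1, v+c_2)$: the $u$-constant arises from integrating $\gamma' = 1 - \gamma^2$, while the $v$-constant is the phase $\theta_0$ of $X_0$ absorbed into the rotation factor. I would also note that compatibility of the flow with the sphere constraint is automatic, since $\frac{d}{du}\bigl(|X|^2 + \gamma^2\bigr) = 2\gamma\bigl(1 - |X|^2 - \gamma^2\bigr)$ vanishes on the locus $\{|X|^2 + \gamma^2 = 1\}$, so that \eqref{eq:n:UtoS2} is preserved by \eqref{conf-eq-toS2}.
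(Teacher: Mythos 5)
Your proof is correct and is exactly the direct verification the paper has in mind: the paper itself states only that ``the assertion is immediately verified and is left to the reader,'' so your decoupling of the autonomous equation $\gamma'=1-\gamma^2$, the exclusion of the $\coth$ branch via $|X|^2=1-\gamma^2\ge 0$, and the absorption of the two integration constants into the translation $(u,v)\mapsto(u+c_1,v+c_2)$ supply precisely the omitted details. No gaps; the only point one might add for completeness is that a non-constant $\gamma$ can never attain $\pm1$, by uniqueness for the ODE $\gamma'=1-\gamma^2$, which justifies the separation of variables on the whole domain.
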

\begin{proof}
The assertion is immediately verified  and is left to the reader.  
\end{proof}

Other than \eqref{eq:conf-map-sol}, 
we seek the solution to \eqref{hm-eq-toS2}. 
Since $|X|^2+\gamma^2=1$, we set 
\begin{align*}
\begin{pmatrix}
 X \\ \gamma
\end{pmatrix}
&= 
\begin{pmatrix}
 e^{\iii g} \cos f \\ \sin f
\end{pmatrix}
\end{align*}
for $f=f(u)$, $g=g(u)$. Then we have
\begin{align}
 \eqref{hm-eq-toS2} & \iff 
\begin{cases}
  f'' + (1+(g')^2)\cos f \sin f  =0 \\
 f' g' -(g' \cos f \sin f )' =0  \\ 
 g' \cos^2 f = c_1
\end{cases} \notag \\
& \iff
 \begin{cases}
 f'' + \{ 1+(g')^2 \} \cos f \sin f  =0 \\ 
 g' \cos^2 f = c_1  
 \end{cases}. \label{eq:hm-eq-reduced} 
\end{align}
If a solution to \eqref{eq:hm-eq-reduced} consists of 
a constant function $f$ other than 
\eqref{eq:conf-map-sol}, then $\sin f =0$ and $g=c_1 u (+C)$ $(c_1 \ne 0)$. 
In other words, the solution to \eqref{hm-eq-toS2} which has constant $\gamma$ is 
\begin{equation}\label{eq:special-sol}
 \begin{pmatrix}
  X \\ \gamma
 \end{pmatrix}
=
\begin{pmatrix}
 e^{\iii c_1 u} \\ 0 
\end{pmatrix}, \quad 
c_1 \ne 0. 
\end{equation}
Applying Proposition \ref{prop:pre-cmc-helicoid} with \eqref{eq:special-sol}, 
we have 
\begin{equation}\label{eq:c-cylinder-as-heli}
 \begin{aligned}
 -2H \xbf &= 
\begin{pmatrix}
 e^{iv} & 0 \\ 0 & 1
\end{pmatrix}
\left\{
\begin{pmatrix}
 \pm e^{i c_1 u} \\ 0 
\end{pmatrix}
+
\begin{pmatrix}
 0 \\ \int (-1) du
\end{pmatrix}
\right\}
+ 
\begin{pmatrix}
 0 \\ c_1 v
\end{pmatrix} \\
&=
\begin{pmatrix}
 e^{i(v \pm c_1 u)} \\ -u + c_1 v
\end{pmatrix}. 
\end{aligned}
\end{equation}
Note that \eqref{eq:c-cylinder-as-heli} represents a circular cylinder.

Next, we consider the system \eqref{eq:hm-eq-reduced} of equations  
under the assumption that $f$ is non-constant. 
Substituting the second equation into the first, we have 
\begin{equation*}
 f'' + c_1^2 \frac{\sin f}{\cos^3 f} + \sin f \cos f=0. 
\end{equation*}
Multiplying by $f'$ on both sides and integrating them, we have 
\begin{equation*}
 (f')^2 + \frac{c_1^2}{\cos^2 f} -\cos^2 f = c_2+1 
\end{equation*}
for some constant $c_2$. 
(On the right hand side, we have used $c_2 +1$ instead of just $c_2$ 
for later convenience.)
\begin{equation*}
 \{ \cos f \cdot f'\}^2 +c_1^2 -\cos^4 f = (c_2+1) \cos^2 f
\end{equation*}
Recalling that $\sin f = \gamma$, we have
\begin{align}
 (\gamma')^2 &=(1-\gamma^2)^2 +(c_2+1) (1-\gamma^2) -c_1^2 \notag \\
&= \gamma^4 -(c_2+1) \gamma^2 +(c_2-c_1^2). \label{al:de-4-gamma}
\end{align}
So the solution $\gamma$ must satisfy the ordinary differential equation 
\eqref{al:de-4-gamma}. To analyze \eqref{al:de-4-gamma}, we may assume 
$c_1 \ge 0$. At the same time, we have to take it into consideration that 
the range of $\gamma$ must $-1 \le \gamma \le 1$. Hence, the constants 
$c_1$, $c_2$ receive restrictions on their values. 
Indeed, we have the following lemma. 
\begin{lemma}
 Let ${}^t (X, \gamma)$ be a solution to \eqref{hm-eq-toS2} 
with $|X|^2 + \gamma^2=1$ which is neither \eqref{eq:conf-map-sol} 
nor \eqref{eq:special-sol}. Then $\gamma$ satisfies the differential 
equation \eqref{al:de-4-gamma} for constants $c_1, c_2$ with 
\begin{equation*}
 c_2 > c_1^2. 
\end{equation*}
\end{lemma}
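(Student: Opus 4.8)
The plan is to convert the requirement that the right-hand side of \eqref{al:de-4-gamma} be a perfect square into sign conditions on a parabola, and then to check that whenever $c_2\le c_1^2$ the only $\gamma$ compatible with those conditions reproduces one of the two excluded solutions. First I would record the standing constraint $\gamma\in[-1,1]$ coming from $|X|^2=1-\gamma^2\ge0$, and rewrite the quartic on the right of \eqref{al:de-4-gamma} as $P(\gamma)=Q(\gamma^2)$ with
\[
Q(t)=t^2-(c_2+1)t+(c_2-c_1^2),
\]
an upward-opening parabola in $t=\gamma^2\in[0,1]$. The two boundary evaluations $Q(0)=c_2-c_1^2$ and $Q(1)=-c_1^2$ are the whole engine of the proof: since $\bigl(\gamma'\bigr)^2=P(\gamma)\ge0$, the attained value $t=\gamma^2$ is forced into the region $\{Q\ge0\}$, and the target inequality $c_2>c_1^2$ is precisely $Q(0)>0$. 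I would therefore argue by contradiction, assuming $Q(0)=c_2-c_1^2\le0$, and split on whether $c_1$ vanishes.

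If $c_1\ne0$, then $Q(1)=-c_1^2<0$ while $Q(0)\le0$, so both $t=0$ and $t=1$ lie in the closed interval spanned by the two real roots of $Q$ (the discriminant $(c_2-1)^2+4c_1^2$ being positive). Consequently $Q(t)<0$ throughout $(0,1]$ and $Q\le0$ on all of $[0,1]$, so $P(\gamma)=Q(\gamma^2)\ge0$ can hold only at $\gamma=0$; this forces $\gamma\equiv0$, and existence of a solution then forces $Q(0)=0$, i.e. $c_2=c_1^2$. Feeding $\gamma\equiv0$ back into \eqref{hm-eq-toS2} gives $|X|=1$ together with $g'=c_1$, which is exactly the excluded solution \eqref{eq:special-sol}.

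If $c_1=0$, then $Q(0)=c_2\le0$ and $P$ factors as $P(\gamma)=(\gamma^2-1)(\gamma^2-c_2)$; on $[-1,1]$ the first factor is $\le0$ and the second is $\ge0$, so $P\le0$ with equality only at $\gamma=\pm1$ (and also at $\gamma=0$ when $c_2=0$). Hence $\gamma$ is again constant, equal to $\pm1$ or $0$: the former gives $X=0$ and $\nbf={}^t(0,0,\pm1)$, the latter (with $\Im(X'\bar X)=0$) gives a constant $X$. Both are excluded, the first as the constant member of \eqref{eq:conf-map-sol} and the second because a constant Gauss map violates the standing hypothesis $d\nbf+\nbf\times(*d\nbf)\ne0$. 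In every case the assumption $c_2\le c_1^2$ returns a forbidden map, so $c_2>c_1^2$.

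The only genuinely delicate point I anticipate is the bookkeeping in the case $c_1\ne0$ of which root of $Q$ lies where: I must verify that $Q(1)<0$ really places $t=1$ strictly between the roots and that $Q(0)\le0$ then traps the whole interval $[0,1]$ inside $\{Q\le0\}$, so that no nonconstant $\gamma$ can survive. Everything else is sign-chasing on a parabola, and the identification of the surviving constant solutions with \eqref{eq:conf-map-sol} and \eqref{eq:special-sol} is a direct substitution into \eqref{hm-eq-toS2}.
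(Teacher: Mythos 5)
Your proof is correct and rests on the same computation as the paper's: evaluating the quadratic $F(x)=x^2-(c_2+1)x+(c_2-c_1^2)$ at the endpoints of $[0,1]$ and exploiting its convexity together with the constraint $(\gamma')^2=F(\gamma^2)\ge 0$. The paper runs the argument directly (non-constancy of $\gamma$ yields a point where $F>0$, and $F(1)=-c_1^2\le 0$ plus convexity then forces $F(0)=c_2-c_1^2>0$), while you run the contrapositive and additionally verify that the constant-$\gamma$ solutions surviving $c_2\le c_1^2$ are precisely the excluded ones --- a harmless, slightly more thorough packaging of the same idea.
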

\begin{proof}
 We have only to show that $c_2 > c_1^2$. 

For a solution $\gamma = \gamma(u)$, 
there exists $u=u_0$ such that 
\begin{equation*}
(\gamma')^2 >0, \text{ i.e., } 
\gamma^4 -(c_2+1) \gamma^2 +(c_2-c_1^2)>0,  
\end{equation*}
and thus the following assertion necessarily holds: 
there exists $x \in [0,1]$ such that  
\begin{equation*}
F(x) = x^2 -(c_2+1) x +(c_2-c_1^2)>0.  
\end{equation*}
Since $F(0)= c_2 -c_1^2$, $F(1) = -c_1^2 \le 0$ and $F(x)$ is convex below, 
$F(0)= c_2 -c_1^2$ must be positive.  
\end{proof}
We continue to investigate \eqref{al:de-4-gamma} with $c_2 > c_1^2$.
Let 
\begin{equation*}
 \mathcal C := \{ (c_1,c_2) \in \R^2 \mid c_1 \ge 0, \ c_2 > c_1^2 \}. 
\end{equation*}
For $(c_1,c_2) \in \mathcal C$, the roots of the quadric equation 
$x^2-(c_2+1)x +(c_2-c_1^2)=0$ are 
\begin{equation*}
\alpha = \frac{c_2+1 - \sqrt{(c_2-1)^2 + 4c_1^2}}{2}, \ 
\beta =  \frac{c_2+1 + \sqrt{(c_2-1)^2 + 4c_1^2}}{2}. 
\end{equation*} 
By these, we have a bijective correspondence 
\begin{equation*}
\mathcal C = \{ c_1 \ge 0, c_2 > c_1^2 \}  
\longleftrightarrow 
\mathcal A' =  \{ (\alpha, \beta) \mid 0 < \alpha \le 1, 1 \le \beta < \infty \}. 
\end{equation*}
The inverse is 
\begin{equation*}
 \begin{cases}
  c_1 = \sqrt{(1-\alpha)(\beta-1)} \\ 
  c_2 = \alpha + \beta -1. 
 \end{cases}
\end{equation*}
Moreover, we set 
\begin{equation*}
 a := \sqrt{\alpha}
, \quad 
 b :=\sqrt{\beta}
\end{equation*}
and have a bijective correspondence
\begin{equation*}
\mathcal C \longleftrightarrow 
\mathcal A := \{ (a,b) \mid a \in (0,1], b \in [1,\infty) \}. 
\end{equation*}
Therefore, the differential equation \eqref{al:de-4-gamma} 
with $(c_1, c_2) \in \mathcal C$
is rewritten as 
\begin{equation*}
 (\gamma')^2 = (a^2 -\gamma^2)(b^2 -\gamma^2), \ (a,b) \in \mathcal A. 
\end{equation*}
The general solution is 
\begin{equation*}
 \gamma = a \sn (bu , \frac{a}{b}) \left( = b \sn (au , \frac{b}{a}) \right), 
\end{equation*}
where $u$ can be replaced by $\pm u +C$ for an arbitrary constant $C$.  
Note that the range of $\gamma = a \sn (bu , \frac{a}{b})$ is included in 
$[-1,1]$ for each fixed $(a,b) \in \mathcal A$. 

Without loss of generality, we may assume $C=0$. 
Thus we think of $\gamma = \pm a \sn (bu , \frac{a}{b})$. 
It will be explained later that the choice of the sign of 
$\gamma = \pm a \sn (bu , \frac{a}{b})$ does not create any essential difference. 
So we concentrate on $\gamma = - a \sn (bu , \frac{a}{b})$ for a while. 
%
%
The second equation of 
\eqref{eq:hm-eq-reduced} leads us to 
\begin{equation}\label{eq:solution-g}
\begin{aligned}
  g &= \int \frac{c_1}{\cos^2 f} du = 
c_1 \int \frac{du}{1-a^2 \sn^2 (bu, a/b)} \\
&=\frac{c_1}{b} \varPi (\am(bu, \frac{a}{b}), a^2, \frac{a}{b}), \quad
c_1 = \sqrt{(1-a^2)(b^2-1)} , 
\end{aligned}
\end{equation}
where $\varPi$ denotes the elliptic integral of the third kind 
\begin{equation*}
 \varPi (\varphi, n, k) = \int_0^\varphi 
\frac{d \theta}{(1-n \sin^2 \theta)\sqrt{1-k^2 \sin^2 \theta}} . 
\end{equation*}

The argument up to here proves the following lemma: 
\begin{lemma}\label{lem:gmap-cmc-helicoid}
Let a map $\nbf \colon (U, [du^2+dv^2]) \to S^2$ 
be given in the form \eqref{eq:n:UtoS2} with non-constant $\gamma$. 
Then $\nbf$ is a harmonic map not being \eqref{eq:conf-map-sol}  
if and only if it is given by \eqref{eq:special-sol} or 
\begin{align}
 \nbf &= 
\begin{pmatrix}
 e^{\iii v} & 0 \\ 0 & 1
\end{pmatrix}
\begin{pmatrix}
 e^{\iii g} & 0 \\ 0 & 1
\end{pmatrix}
\begin{pmatrix}
 \sqrt{1-a^2 \sn^2 (bu, \frac{a}{b})} \\ -a \sn(bu, \frac{a}{b})
\end{pmatrix}, \label{al:gmap-cmc-helicoid}
\end{align}
where $a, b$ are constants with $0 < a \le 1 \le b$ and 
$g$ is a function given by \eqref{eq:solution-g}. 
\end{lemma}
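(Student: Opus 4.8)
The plan is to treat Lemma \ref{lem:gmap-cmc-helicoid} as the culmination of the reduction already carried out above, so that only an organization of the cases and a short verification of the converse remain. For the forward implication, I start from a harmonic $\nbf$ of the form \eqref{eq:n:UtoS2} that is not one of the maps in \eqref{eq:conf-map-sol}; by Lemma \ref{lem:de-gm-heli} this is precisely the system \eqref{hm-eq-toS2}. On the open set where $|X| \neq 0$, equivalently $\gamma \neq \pm 1$, I write $X = e^{\iii g}\cos f$ and $\gamma = \sin f$, which turns \eqref{hm-eq-toS2} into the reduced system \eqref{eq:hm-eq-reduced}. Because $\nbf$ is real-analytic, the locus $|X| = 0$ is discrete, so this local reduction determines $\nbf$ on a dense open set and hence everywhere by continuity.

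Next I separate the two cases. If $f$ is constant, then $\sin f = 0$ and the second equation of \eqref{eq:hm-eq-reduced} forces $g = c_1 u$ up to an additive constant, which is exactly \eqref{eq:special-sol}. If $f$ varies, the first integral of \eqref{eq:hm-eq-reduced} yields the autonomous equation \eqref{al:de-4-gamma} for $\gamma$; the preceding lemma guarantees $c_2 > c_1^2$, so $(c_1,c_2) \in \mathcal C$, and the change of parameters to $(a,b) \in \mathcal A$ rewrites \eqref{al:de-4-gamma} as $(\gamma')^2 = (a^2 - \gamma^2)(b^2 - \gamma^2)$. Its general solution, up to translation in $u$ and a sign, is $\gamma = a\sn(bu, a/b)$; normalizing the constant to $0$ and selecting $\gamma = -a\sn(bu, a/b)$, I recover $\cos f = \sqrt{1 - a^2\sn^2(bu, a/b)}$ and obtain $g$ by integrating the second equation of \eqref{eq:hm-eq-reduced}, namely \eqref{eq:solution-g}. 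Feeding these back into $X = e^{\iii g}\cos f$ and $\gamma = \sin f$ assembles \eqref{al:gmap-cmc-helicoid}.

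For the converse, I verify directly that both \eqref{eq:special-sol} and \eqref{al:gmap-cmc-helicoid} solve \eqref{hm-eq-toS2}. The cleanest route is to check the reduced system \eqref{eq:hm-eq-reduced} with the constants $c_1 = \sqrt{(1-a^2)(b^2-1)}$ and $c_2 = \alpha + \beta - 1$, using $\frac{d}{du}\sn(bu, a/b) = b\,\cn\,\dn$ together with the first integral $(\gamma')^2 = (a^2 - \gamma^2)(b^2 - \gamma^2)$; this is a routine computation. One then notes that these maps are not among \eqref{eq:conf-map-sol}: they were produced as harmonic but non-conformal solutions by construction, and concretely the distinguishing feature is the behaviour of $\gamma$, which is identically $0$ in \eqref{eq:special-sol} and oscillates inside $[-1,1]$ in \eqref{al:gmap-cmc-helicoid}, as opposed to the constant value $\pm 1$ or the monotone $\tanh u$ appearing in \eqref{eq:conf-map-sol}.

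I expect the main obstacle to be the behaviour across the degenerate locus $|X| = 0$, where the substitution $X = e^{\iii g}\cos f$ breaks down and the phase $g$ of \eqref{eq:solution-g} is only defined up to a branch. The delicate point is to confirm that the single closed-form expression extends smoothly through the isolated zeros of $\cos f$ and reproduces the \emph{global} constant $c_1 = \Im(X'\bar X)$ rather than a merely local one; one should also confirm, as already remarked, that $\gamma = a\sn(bu, a/b)$ has range inside $[-1,1]$ for every $(a,b) \in \mathcal A$, so that $\nbf$ genuinely lands in $S^2$. Once these continuity and range checks are in place, the equivalence between \eqref{hm-eq-toS2} and the explicit forms follows at once.
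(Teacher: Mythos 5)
Your proposal follows the paper's own argument essentially verbatim: the substitution $X=e^{\iii g}\cos f$, $\gamma=\sin f$ reducing \eqref{hm-eq-toS2} to \eqref{eq:hm-eq-reduced}, the split into constant versus non-constant $f$, the first integral leading to \eqref{al:de-4-gamma}, the passage to $(a,b)\in\mathcal A$, and the solution $\gamma=-a\sn(bu,a/b)$ with $g$ recovered from \eqref{eq:solution-g}. The extra remarks on the discreteness of the zero locus of $X$ and the range of $a\sn(bu,a/b)$ are sensible refinements but do not change the route, so the proof is correct and matches the paper's.
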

We have obtained Lemma \ref{lem:gmap-cmc-helicoid} under the assumption 
that $\gamma$ is non-constant. However, we can include the case that $\gamma$ is 
constant in Lemma \ref{lem:gmap-cmc-helicoid},  
by allowing that the number $a$ can be $0$ in \eqref{al:gmap-cmc-helicoid}. 
Hence, Lemma \ref{lem:gmap-cmc-helicoid}, Proposition \ref{prop:pre-cmc-helicoid} 
and \eqref{eq:c-cylinder-as-heli} 
lead us to the following proposition. 
\begin{proposition}\label{prop:cmc-helicoid}
For constants $a, b$ with $0 \le a \le 1 \le b$, set 
\begin{equation*}
 c_1 = \sqrt{(1-a^2)(b^2-1)}, \ 
g= g(u) =\frac{c_1}{b} \varPi (\am(bu, \frac{a}{b}), a^2, \frac{a}{b}). 
\end{equation*}
Then, for non-zero constant $H$,  
\begin{equation}\label{eq:cmc-helicoid}
-2H \xbf = 
 \begin{pmatrix}
  e^{\iii v} & 0 \\ 0 & 1
 \end{pmatrix}
 \begin{pmatrix}
  e^{\iii g} & 0 \\ 0 & 1
 \end{pmatrix}
\left\{
\begin{pmatrix}
 \sqrt{1-a^2 \sn^2} \\
-a \sn
\end{pmatrix}
+
\begin{pmatrix}
 \dfrac{a( b \cn \dn - \iii c_1 \sn)}{\sqrt{1-a^2 \sn^2}} (bu, \frac{a}{b}) \\
(b^2-1) u -b E (\am (bu, \frac{a}{b}), \frac{a}{b})
\end{pmatrix}
\right\}
+
\begin{pmatrix}
 0 \\ c_1 v
\end{pmatrix}
\end{equation} 
gives a cmc-$H$ helicoid $\xbf \colon U \to \E^3$. 

Conversely, 
any cmc-$H$ helicoid can be parametrized in this manner.   
\end{proposition}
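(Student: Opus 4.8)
The plan is to establish the forward assertion by inserting the explicit harmonic map of Lemma~\ref{lem:gmap-cmc-helicoid} into the representation of Proposition~\ref{prop:pre-cmc-helicoid}, and then reducing the two non-elementary ingredients there, namely $X'\gamma-X\gamma'$ and $\int(\gamma^2-1)\,du$, to the closed forms displayed in \eqref{eq:cmc-helicoid}. Abbreviating $\sn=\sn(bu,a/b)$ and likewise for $\cn,\dn$, and writing $R:=\sqrt{1-a^2\sn^2}$, equation \eqref{al:gmap-cmc-helicoid} gives $X=e^{\iii g}R$ and $\gamma=-a\sn$, while the second relation in \eqref{eq:hm-eq-reduced} gives $g'=c_1/R^2$. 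First I would record the differentiation rules (in $u$) $\sn'=b\cn\dn$, hence $\gamma'=-ab\cn\dn$ and $R'=-a^2 b\sn\cn\dn/R$, together with $X'=\frac{e^{\iii g}}{R}(\iii c_1-a^2 b\sn\cn\dn)$.

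For the lower entry I would use the standard quadrature $\int_0^\theta\sn^2(t,m)\,dt=m^{-2}\bigl[\theta-E(\am(\theta,m),m)\bigr]$, a consequence of $\dn^2=1-m^2\sn^2$ and $E(\am(\theta,m),m)=\int_0^\theta\dn^2\,dt$; with $\theta=bu$ and $m=a/b$ this yields $\int(\gamma^2-1)\,du=(b^2-1)u-bE(\am(bu,a/b),a/b)$, exactly the lower component of the second vector in \eqref{eq:cmc-helicoid}. The more delicate upper entry comes from
\[
 X'\gamma-X\gamma'
 =\frac{e^{\iii g}}{R}\bigl(-\iii c_1 a\sn+a^3 b\sn^2\cn\dn\bigr)+e^{\iii g}R\,ab\cn\dn ;
\]
rewriting the last summand as $\frac{e^{\iii g}}{R}(1-a^2\sn^2)ab\cn\dn$ makes the $a^3 b\sn^2\cn\dn$ terms cancel, leaving $X'\gamma-X\gamma'=\frac{a\,e^{\iii g}}{R}(b\cn\dn-\iii c_1\sn)$, which is precisely the upper entry of \eqref{eq:cmc-helicoid} after the common factor $\left(\begin{smallmatrix}e^{\iii g}&0\\0&1\end{smallmatrix}\right)$ is extracted. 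The same computation gives $\Im(\bar X X')=c_1$, confirming that the constant of Proposition~\ref{prop:pre-cmc-helicoid} equals $c_1=\sqrt{(1-a^2)(b^2-1)}$.

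For the converse, I would note that any cmc-$H$ helicoid, after the isothermal normalization of Section~\ref{sec:cmc-helicoids}, has a Gauss map of the form \eqref{eq:gmap-of-heli-cpx}; this map is harmonic by Corollary~\ref{cor:cmc-case} and satisfies $d\nbf+\nbf\times *d\nbf\ne0$ because $H\ne0$ (so the orientation-reversing conformal solutions \eqref{eq:conf-map-sol} are excluded). Lemma~\ref{lem:gmap-cmc-helicoid} then forces $\nbf$ to be \eqref{eq:special-sol} or \eqref{al:gmap-cmc-helicoid}, and the boundary value $a=0$ absorbs \eqref{eq:special-sol}: there $R\equiv1$, $\gamma\equiv0$, $g=c_1 u$ with $c_1=\sqrt{b^2-1}$, and the associated surface \eqref{eq:c-cylinder-as-heli} is the $a=0$ specialization of \eqref{eq:cmc-helicoid}. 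The forward computation then puts $\xbf$ into the stated form, proving that every cmc-$H$ helicoid is so parametrized. I expect the main obstacle to be purely bookkeeping: carrying the $e^{\iii g}$ factor and the modulus $a/b$ through the cross-product term without sign slips, and verifying, rather than merely asserting, the cancellation that produces the clean combination $b\cn\dn-\iii c_1\sn$.
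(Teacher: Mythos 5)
Your proposal is correct and follows the same route as the paper, which derives the proposition by combining Lemma~\ref{lem:gmap-cmc-helicoid}, Proposition~\ref{prop:pre-cmc-helicoid} and \eqref{eq:c-cylinder-as-heli}; the paper leaves the substitution implicit, whereas you carry out the computations of $X'\gamma-X\gamma'$, $\int(\gamma^2-1)\,du$ and $\Im(\bar XX')=c_1$ explicitly, and all of them check out. The treatment of the converse and of the $a=0$ boundary case likewise matches the paper's argument.
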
 

\begin{remark}\label{rem:nod-undul}
\begin{enumerate}
 \item As do Carmo-Dajczer \cite{doCD} pointed out, 
for a fixed non-zero constant $H$, all cmc-$H$ helicoids form a 
two-parameter family. Proposition \ref{prop:cmc-helicoid} asserts that 
$a, b$ can play a role of parameters of it.  
 \item 
 For the value $a=1$, \eqref{eq:cmc-helicoid} reduces to 
\begin{equation*}
\xbf = 
 \begin{pmatrix}
  e^{\iii v} & 0 \\ 0 & 1
 \end{pmatrix}
\left\{
\begin{pmatrix}
 \cn \\
 \sn
\end{pmatrix}
+
\begin{pmatrix}
  -b \dn   \\
(b-1/b) \text{id} -b E \circ \am
\end{pmatrix}
\right\}(bu, \frac{1}{b}), 
\end{equation*} 
which is a nodoid. 

 For the value $b=1$, \eqref{eq:cmc-helicoid} reduces to 
\begin{equation*}
\xbf = 
 \begin{pmatrix}
  e^{\iii v} & 0 \\ 0 & 1
 \end{pmatrix}
\left\{
\begin{pmatrix}
 \dn \\
 a \sn
\end{pmatrix}
+
\begin{pmatrix}
  -a \cn   \\
 - E \circ \am
\end{pmatrix}
\right\}(u, a), 
\end{equation*} 
which is an unduloid. 
\end{enumerate}
\end{remark}

\subsection{Fundamental forms}
We have obtained an explicit formula for cmc-$H$ helicoids. Now 
we calculate their fundamental forms.

First, we state a general proposition. 
\begin{proposition}\label{prop:ff's-by-n}
Let $\xbf \colon M \to \E^3$ be an immersion which is not a minimal surface.   
Let $\nbf$ be the Gauss map of $\xbf$.  
Then its fundamental forms and Hopf differential are 
given by 
\begin{equation*}
\begin{aligned}
 \fff &= \frac{1}{4 H^2} \left\{
| d \nbf |^2 + 2 | \nbf , \  * d \nbf , \ d \nbf |  + | *d \nbf |^2
\right\},  \\ 
\sff &= \frac{1}{2H} \left\{
|d \nbf|^2 + |\nbf, \ *d\nbf , \ d\nbf |
\right\},  \\ 
\tff &= |d \nbf|^2 , \\ 
 Q &= | (1+\iii *) d \nbf |^2 .  
\end{aligned}
\end{equation*}

Moreover, assume that $\xbf$ is a cmc-$H$ surface.  
Then, for the cgc-companion of $-2 H \xbf$, i.e., 
$\check \xbf = \int \nbf \times (* d \nbf)$, 
the first and second fundamental forms are given by  
\begin{equation*}
 \check \fff  = | * d \nbf |^2, \quad \check \sff = | \nbf, \ d \nbf, \ * d \nbf |,  
\end{equation*}
respectively. 
\end{proposition}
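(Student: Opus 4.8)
The plan is to derive every formula from the pre-equal relation \eqref{eq:unified-pre-formula}, namely $-2H\,d\xbf = d\nbf + \nbf\times(*d\nbf)$, combined with three elementary identities for $\R^3$-valued $1$-forms: the scalar-triple-product rule $\la\boldsymbol a,\boldsymbol b\times\boldsymbol c\ra = |\boldsymbol a,\boldsymbol b,\boldsymbol c|$, the Lagrange identity $\la\boldsymbol a\times\boldsymbol b,\boldsymbol a\times\boldsymbol b\ra = |\boldsymbol a|^2|\boldsymbol b|^2-\la\boldsymbol a,\boldsymbol b\ra^2$, and $\la\boldsymbol a,\boldsymbol a\times\boldsymbol b\ra=0$. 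Differentiating $\la\nbf,\nbf\ra=1$ gives $\la\nbf,d\nbf\ra=0$, and since the Hodge operator acts only on the form factor, also $\la\nbf,*d\nbf\ra=0$. I would stress at the outset that $d\nbf$ and $*d\nbf$ are $1$-forms, so a symbol like $|\nbf,*d\nbf,d\nbf|$ denotes the symmetric $2$-tensor produced by the symmetric product of its two $1$-form slots; with this reading the triple-product identity remains linear and cyclic in its entries.

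First I would compute the first fundamental form $\fff=\la d\xbf,d\xbf\ra$ by inserting $d\xbf=-\frac{1}{2H}\{d\nbf+\nbf\times(*d\nbf)\}$ and squaring. The two diagonal terms produce $|d\nbf|^2$ and, through the Lagrange identity with $|\nbf|^2=1$ and $\la\nbf,*d\nbf\ra=0$, the term $|*d\nbf|^2$; the cross term $2\la d\nbf,\nbf\times(*d\nbf)\ra$ equals $2|d\nbf,\nbf,*d\nbf|=2|\nbf,*d\nbf,d\nbf|$ after a cyclic permutation. This is exactly the asserted $\fff$. The second fundamental form follows identically from $\sff=-\la d\xbf,d\nbf\ra$, an expression one checks coincides with $\sum h_{ij}\omega^i\omega^j$ using $de_3=-e_ih_{ij}\omega^j$; here the two contributions give $\frac{1}{2H}\{|d\nbf|^2+|\nbf,*d\nbf,d\nbf|\}$. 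The third fundamental form $\tff=|d\nbf|^2$ is its own definition.

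For the Hopf differential I would switch to the complex notation of Section \ref{sec:pf-ken-type}. With $e=\tfrac12(e_1-\iii e_2)$, $\omega=\omega^1+\iii\omega^2$ and $*\omega=-\iii\omega$, the operator $1+\iii*$ annihilates the $\bar\omega$-part and doubles the $\omega$-part, so it is twice the projection onto the $(1,0)$-component. Feeding in $d\nbf=de_3=-e\bar\pi-\bar e\pi$ with $\pi=q\omega+H\bar\omega$ from \eqref{eq:pi} isolates this $(1,0)$-part, and $|(1+\iii*)d\nbf|^2$ is then evaluated by the isotropy relations $\la e,e\ra=\la\bar e,\bar e\ra=0$ and $\la e,\bar e\ra=\tfrac12$, which returns $Q$ in the stated form.

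Finally, for the cgc-companion $\check\xbf=\int\nbf\times(*d\nbf)$ one has $d\check\xbf=\nbf\times(*d\nbf)$, whence $\check\fff=\la d\check\xbf,d\check\xbf\ra=|*d\nbf|^2$ by the Lagrange identity and $\la\nbf,*d\nbf\ra=0$. Because $\la d\check\xbf,\nbf\ra=\la\nbf\times(*d\nbf),\nbf\ra=0$, the unit normal of $\check\xbf$ is again $\nbf$ up to orientation, so $\check\sff=-\la d\check\xbf,d\nbf\ra=-\la\nbf\times(*d\nbf),d\nbf\ra=|\nbf,d\nbf,*d\nbf|$. I expect the genuine obstacles to be two bookkeeping points rather than any deep step: correctly reading $1+\iii*$ as the $(1,0)$-projection and tracking its normalization in the Hopf-differential computation, and fixing the orientation, hence the sign of the normal of $\check\xbf$, so that the signs in $\check\sff$ emerge as written.
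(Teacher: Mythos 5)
Your proposal is correct and, for $\fff$, $\sff$, $\tff$, $\check\fff$ and $\check\sff$, it is exactly the computation the paper intends: substitute the pre-equal relation \eqref{eq:unified-pre-formula} (resp. $d\check\xbf=\nbf\times(*d\nbf)$) into the definitions $\fff=\la d\xbf,d\xbf\ra$, $\sff=-\la d\xbf,d\nbf\ra$, $\tff=\la d\nbf,d\nbf\ra$ and simplify with the triple-product and Lagrange identities together with $\la\nbf,d\nbf\ra=\la\nbf,*d\nbf\ra=0$; your explicit remark that the bracket $|\,\cdot\,,\cdot\,,\cdot\,|$ carries a symmetric product in its two $1$-form slots is exactly the bookkeeping point that makes the cyclic rearrangements legitimate. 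The one place you genuinely depart from the paper is the Hopf differential: the paper takes the identity $Q=\sff-H\fff+\iii\,|\nbf,\,d\xbf,\,*d\nbf|$ as the definition of $Q$ and substitutes \eqref{eq:unified-pre-formula} into it, whereas you evaluate $|(1+\iii*)d\nbf|^2$ directly in the complex frame, using that $1+\iii*$ is twice the $(1,0)$-projection and the isotropy of $e$, which yields $-2(He+q\bar e)\omega$ and hence a multiple of $q\,\omega^2$. Your computation is right, but to conclude that this \emph{equals} $Q$ ``in the stated form'' you still need to fix which normalization of the Hopf differential the proposition means; that is precisely what the paper's formula $Q=\sff-H\fff+\iii\,|\nbf,\,d\xbf,\,*d\nbf|$ supplies, so you should either quote it and match constants, or verify that your $(1,0)$-part expression has real part equal to the appropriate multiple of $\sff-H\fff$. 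With that normalization pinned down, the two routes agree.
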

A proof of Proposition \ref{prop:ff's-by-n} 
is straightforward by the definitions $\fff = \la d\xbf, d\xbf \ra$, 
$\sff = -\la d\xbf, d\nbf \ra$, 
$\tff = \la d\nbf, d\nbf \ra$, the formula 
$Q = \sff -H \fff + \iii | \nbf, \ d \xbf, \ * d \nbf |$, 
and \eqref{eq:unified-pre-formula} or \eqref{eq:lelieuvre-type}.  
So the detail is left to the reader.

The following proposition 
follows from Proposition \ref{prop:ff's-by-n} and 
Proposition \ref{prop:cmc-helicoid}. 
\begin{proposition}\label{prop:fff's}
For a cmc-$H$ helicoid $\xbf$ \eqref{eq:cmc-helicoid}, 
  \begin{align*}
  4 H^2 \fff &= \left( (a \cn- b \dn)(bu, \frac{a}{b}) \right)^2 (du^2 +dv^2), \\
4H (\sff - H \fff) &= (c_2-1)(du^2 -dv^2) + 4c_1 dudv , \\
\tff &= (c_2 -a^2 \sn^2(bu, \frac{a}{b}))du^2 +2c_1 dudv 
+(1 -a^2 \sn^2(bu, \frac{a}{b})) dv^2 , \\
Q &= \frac{(\sqrt{b^2-1} -\iii \sqrt{1-a^2} )^2}{4H} (du +\iii dv)^2 
 \end{align*}
and, for the cgc-companion of $-2 H \xbf$,  
\begin{align*}
 \check \fff &= (1 -a^2 \sn^2(bu, \frac{a}{b})) du^2 -2c_1 dudv 
+(c_2 -a^2 \sn^2(bu, \frac{a}{b})) dv^2, \\
 \check \sff &= - ab \cn \dn (bu, \frac{a}{b}) (du^2 + dv^2) ,  
\end{align*}
where $c_1 = \sqrt{(1-a^2)(b^2-1)}$ and $c_2 = a^2 + b^2 -1$. 
\end{proposition}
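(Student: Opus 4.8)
The plan is to substitute the explicit Gauss map of Proposition~\ref{prop:cmc-helicoid} (equivalently \eqref{al:gmap-cmc-helicoid}) into the coordinate-free formulas of Proposition~\ref{prop:ff's-by-n} and reduce. Writing $\nbf = R(v)\,{}^t(X,\gamma)$ with $R(v)=\left(\begin{smallmatrix} e^{\iii v} & 0 \\ 0 & 1\end{smallmatrix}\right)$, I already have from the computation preceding Lemma~\ref{lem:de-gm-heli} that
\[
 d\nbf = R(v)\bigl\{ {}^t(X',\gamma')\,du + {}^t(\iii X,0)\,dv \bigr\}, \qquad
 *d\nbf = R(v)\bigl\{ -{}^t(\iii X,0)\,du + {}^t(X',\gamma')\,dv \bigr\}.
\]
The key structural remark is that, for each fixed $v$, one has $R(v)\in SO(3)$, so $R(v)$ preserves both the Euclidean inner product and the scalar triple product. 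Hence every term in Proposition~\ref{prop:ff's-by-n} can be evaluated in the reduced frame ${}^t(X,\gamma)$, and I only need the three scalar building blocks $|X|^2$, $\Im(X'\bar X)$, $|X'|^2+(\gamma')^2$, together with $\gamma'$ and one $3\times 3$ determinant. A direct expansion then gives $|d\nbf|^2=(|X'|^2+(\gamma')^2)\,du^2+2\Im(X'\bar X)\,du\,dv+|X|^2\,dv^2$ and $|*d\nbf|^2$ with the roles of $du^2,dv^2$ swapped and the cross term negated.

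The heart of the matter is an algebraic collapse. From $|X|^2+\gamma^2=1$ I get $|X|^2=1-\gamma^2$ and $\Re(X'\bar X)=-\gamma\gamma'$, while \eqref{hm-eq-toS2} gives $\Im(X'\bar X)=c_1$. Writing $X=e^{\iii g}\sqrt{1-\gamma^2}$ and using $g'(1-\gamma^2)=c_1$ from \eqref{eq:hm-eq-reduced}, a short computation yields $|X'|^2+(\gamma')^2=\bigl(c_1^2+(\gamma')^2\bigr)/(1-\gamma^2)$. I then invoke the first integral \eqref{al:de-4-gamma}, namely $(\gamma')^2=(a^2-\gamma^2)(b^2-\gamma^2)$ with $a^2+b^2=c_2+1$ and $a^2b^2=c_2-c_1^2$, which produces the clean factorization $c_1^2+(\gamma')^2=(1-\gamma^2)(c_2-\gamma^2)$; hence $|X'|^2+(\gamma')^2=c_2-\gamma^2=c_2-a^2\sn^2$. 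For the determinant, expanding $|\nbf,*d\nbf,d\nbf|$ in the reduced frame and repeatedly substituting $\Re(X'\bar X)=-\gamma\gamma'$ and $|X|^2=1-\gamma^2$ kills the $du\,dv$ entry and leaves $|\nbf,*d\nbf,d\nbf|=-\gamma'(du^2+dv^2)$, where $\gamma'=-ab\cn\dn$.

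With these pieces the six formulas assemble mechanically: $\tff=|d\nbf|^2$ and $\check\fff=|*d\nbf|^2$ are read off directly, $\check\sff=|\nbf,d\nbf,*d\nbf|=-|\nbf,*d\nbf,d\nbf|$, and the identities $4H(\sff-H\fff)=|d\nbf|^2-|*d\nbf|^2$ and $4H^2\fff=|d\nbf|^2+2|\nbf,*d\nbf,d\nbf|+|*d\nbf|^2$ follow from Proposition~\ref{prop:ff's-by-n}. The Hopf differential comes from the same data after observing that $(1+\iii *)\,d\nbf = R(v)\,{}^t(X'+X,\gamma')\,(du+\iii dv)$, whose complex-bilinear square is constant in $(u,v)$; one identifies this constant, using $a^2+b^2=c_2+1$ and $c_1=\sqrt{(1-a^2)(b^2-1)}$, as $(c_2-1)-2\iii c_1=(\sqrt{b^2-1}-\iii\sqrt{1-a^2})^2$. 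The final passage to closed form uses the Jacobi identities $\cn^2=1-\sn^2$ and $\dn^2=1-(a/b)^2\sn^2$ to recognize the conformal factor $c_2+1-2a^2\sn^2\mp 2ab\cn\dn$ as $(a\cn\mp b\dn)^2$.

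I expect the main obstacle to be not conceptual but a matter of sign discipline. The delicate points are the orientation convention implicit in the scalar triple product (which governs whether $|\nbf,*d\nbf,d\nbf|$ equals $+\gamma'$ or $-\gamma'$), the sign choice $\gamma=-a\sn$ fixed in Proposition~\ref{prop:cmc-helicoid}, and the placement of $H$; these must be tracked consistently so that the $\pm 2ab\cn\dn$ contributions land with the signs that produce the stated conformal factor $(a\cn-b\dn)^2$ for $\fff$ and the sign of $\check\sff$. Everything else reduces to the single first-integral identity $c_1^2+(\gamma')^2=(1-\gamma^2)(c_2-\gamma^2)$ derived above and routine elliptic-function algebra.
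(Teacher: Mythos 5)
Your route is exactly the paper's: the paper's entire proof of this proposition is the single sentence that it follows from Proposition~\ref{prop:ff's-by-n} and Proposition~\ref{prop:cmc-helicoid}, and the details you supply --- the reduction to the scalar blocks $|X|^2=1-\gamma^2$, $\Im(X'\bar X)=c_1$, $\Re(X'\bar X)=-\gamma\gamma'$, the identity $|X'|^2+(\gamma')^2=\bigl(c_1^2+(\gamma')^2\bigr)/(1-\gamma^2)$, the first-integral factorization $c_1^2+(\gamma')^2=(1-\gamma^2)(c_2-\gamma^2)$, and $|\nbf,*d\nbf,d\nbf|=-\gamma'\,(du^2+dv^2)$ --- are all correct and are precisely what the paper leaves unsaid.

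The one step you defer to ``sign discipline'', however, cannot be completed in the direction you hope. With the choice $\gamma=-a\sn(bu,\tfrac ab)$ fixed in \eqref{eq:cmc-helicoid}, your own identities give $2|\nbf,*d\nbf,d\nbf|=-2\gamma'(du^2+dv^2)=+2ab\cn\dn\,(du^2+dv^2)$, hence
\[
4H^2\fff=\bigl(c_2+1-2a^2\sn^2+2ab\cn\dn\bigr)(du^2+dv^2)
=\Bigl((a\cn+b\dn)(bu,\tfrac ab)\Bigr)^2(du^2+dv^2),
\]
not $(a\cn-b\dn)^2$. A direct check on the unduloid ($b=1$, $H=-1/2$) confirms this: the profile radius of \eqref{eq:cmc-helicoid} is $\dn+a\cn$, so the conformal factor is $(a\cn+\dn)^2$. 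In fact the stated list is internally inconsistent with Proposition~\ref{prop:ff's-by-n}, which forces $4H^2\fff=\tff-2\check\sff+\check\fff$; substituting the stated $\tff$, $\check\fff$ and $\check\sff$ returns $(a\cn+b\dn)^2$. The expression $(a\cn-b\dn)^2$ (together with $\check\sff=+ab\cn\dn\,(du^2+dv^2)$) is what one obtains after the half-period shift $u\mapsto u+2K(\tfrac ab)/b$, i.e.\ for the other sign choice $\gamma=+a\sn$; the same convention reappears in Section~\ref{sec:f-i-cmc-heli}, where it is harmless because only the range of $\cn\dn$ matters. So your method is sound and identical to the paper's; do not try to force the stated signs --- record the discrepancy as a sign slip in the statement rather than a flaw in your computation.
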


\begin{remark}[Isothermic nets]
One application of these formulas in Proposition \ref{prop:fff's} 
is that we can obtain an isothermic net on any 
cmc-$H$ helicoidal surface explicitly. The word 
`\textit{isothermic}' means `isothermal and curvature line', in other words, 
the first and second fundamental forms can be simultaneously diagonalized.  
In fact, if we introduce a new parameter $(x, y)$ by 
\begin{equation*}
 \begin{pmatrix}
  x \\ y
 \end{pmatrix}
=
\frac{1}{\sqrt{b^2-a^2}}
\begin{pmatrix}
 \sqrt{1-a^2} & -\sqrt{b^2 -1} \\ 
 \sqrt{b^2 -1} & \sqrt{1-a^2}
\end{pmatrix}
\begin{pmatrix}
 u \\ v
\end{pmatrix}, 
\end{equation*}
then $(x, y)$ is isothermic.  
\end{remark}

\begin{remark}[Cgc-companions are wave fronts.]\label{rem:cgc=wf}
 Another application of Proposition \ref{prop:fff's} is to prove that 
any cgc-companion $\check \xbf$ fails to be an immersion. In fact, one can show that 
the first fundamental form $\check \fff$ degenerates where 
$\sn (bu, \mu) = \pm 1$. This happens for $u = (2k-1) K(a/b) /b$, where 
$k \in \mathbb Z$ and $K(a/b)$ is the complete elliptic integral of the first kind.  
Thus $\check \xbf$ is a wave front because it is a parallel surface of a 
regular surface $\xbf$. 
\end{remark}

\section{Further investigation on cmc-$H$ helicoids}\label{sec:f-i-cmc-heli}
We shall further investigate cmc-$H$ helicoids and their cgc-companions
 based on the explicit expression \eqref{eq:cmc-helicoid}. 

\subsection{Modification}
For the sake of later argument, we start by modifying the formula 
\eqref{eq:cmc-helicoid} slightly. 
We introduce 
\begin{equation*}
 \mu = a/b,   
\end{equation*}
and use $\mu, b$ instead of $a, b$. The range is 
\begin{equation*}
 \mathfrak{M} = \{ (\mu, b) \mid 0 \le \mu \le 1/b, \  1 \le b \}
= \{ (\mu, b) \mid 0 \le \mu \le 1, \  1 \le b \le 1/ \mu \}. 
\end{equation*}
Namely, $\mathfrak M$ can be considered as a parameter space of 
all cmc-$H$ helicoids. $b=1$ and $b=1/\mu$ correspond to 
unduloids and nodoids, respectively. In particular,   
$(\mu, b) =(1,1)$ corresponds to the round sphere. 
$\mu =0$ with arbitrary $b$ corresponds to the same circular cylinder. 
Note that a circular cylinder can be considered as a helicoidal surface whose 
value of pitch is arbitrary.

The first fundamental form $\fff$ and Hopf differential $Q$ are 
rewritten as 
\begin{align*}
 4 H^2 \fff &= \{ (\mu \cn - \dn)(bu, \mu) \}^2 b^2 (du^2 +dv^2), \\
 4H Q &= (1-\mu^2) e^{\iii \theta(\mu, b)} b^2 (du+\iii dv)^2 , 
\end{align*}
where $\theta =\theta(\mu, b)$ is determined by 
\begin{equation*}
 \cos \theta = \sqrt{\frac{b^2-1}{b^2(1-\mu^2)}}, \quad
 \sin \theta = -\sqrt{\frac{1-\mu^2 b^2}{b^2(1-\mu^2)}}. 
\end{equation*}
Setting $bu =\tilde u$, $bv = \tilde v$, we have 
\begin{equation}\label{eq:fff-Q-cmchelicoid}
\begin{aligned}
 4 H^2 \fff &= \{ (\mu \cn - \dn)(\tilde u, \mu) \}^2
 (d \tilde u^2 +d \tilde v^2),  \\
 4H Q &= (1-\mu^2) e^{\iii \theta(\mu, b)} 
 (d \tilde u+\iii d \tilde v)^2 . 
\end{aligned}
\end{equation}
 The formulas \eqref{eq:fff-Q-cmchelicoid} imply that two cmc-$H$ helicoids 
having the same $\mu$ and distinct $b$ are isometric but non-congruent. 
In other words, two cmc-$H$ helicoids belong to the same associated family 
if and only if they have the same value of $\mu$. 
Moreover, for any fixed $\mu$, the family $\{ \xbf \mid 1 \le b \le 1/\mu \}$ 
includes an unduloid ($b=1$) and a nodoid ($b=1/\mu$).  

We modify Proposition \ref{prop:cmc-helicoid} to a more convenient statement  
by replacing $(bu,bv)$ by $(u,v)$ and $a/b$ by $\mu$. 

\begin{proposition}\label{prop:cmc-helicoid-rev}
Let $(\mu, b) \in \mathfrak M = 
\{ (\mu, b) \mid \ 0 \le \mu \le 1, \ 1 \le b \le 1/\mu \}$ 
and set 
\begin{align*}
 c_1 &= \sqrt{(1-\mu^2 b^2)(b^2-1)}, \\
 \nbf_0(u) &= 
\begin{pmatrix}
 \sqrt{1-\mu^2 b^2 \sn^2} \\
-\mu b \sn
\end{pmatrix}(u, \mu), \\ 
 \check \xbf_0 (u) &= 
\begin{pmatrix}
 \dfrac{\mu b( b \cn \dn - \iii c_1 \sn)}{\sqrt{1-\mu^2 b^2 \sn^2}} (u, \mu) \\
(b-1/b) u -b E (\am (u, \mu), \mu)
\end{pmatrix} ,  \\ 
 g(u) &=\frac{c_1}{b} \varPi (\am(u, \mu), \mu^2 b^2, \mu).  
\end{align*}
Then, for a non-zero constant $H$,     
\begin{equation}\label{eq:cmc-helicoid-rev}
-2H \xbf = 
 \begin{pmatrix}
  e^{\iii v/b} & 0 \\ 0 & 1
 \end{pmatrix}
 \begin{pmatrix}
  e^{\iii g(u)} & 0 \\ 0 & 1
 \end{pmatrix}
\left\{
\nbf_0(u)+ \check \xbf_0 (u) 
\right\}
+
\begin{pmatrix}
 0 \\ c_1 v/b
\end{pmatrix}
\end{equation} 
gives a cmc-$H$ helicoid $\xbf \colon U \to \E^3$. 

Conversely, 
any cmc-$H$ helicoid can be parametrized in this manner. 
\end{proposition}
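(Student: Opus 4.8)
The plan is to deduce Proposition \ref{prop:cmc-helicoid-rev} from Proposition \ref{prop:cmc-helicoid} purely by the reparametrization announced just before the statement, namely by setting $\mu = a/b$ (equivalently $a = \mu b$) and rescaling the isothermal coordinates through $(u,v) \mapsto (bu, bv)$. The first observation I would record is that no new geometry enters: since $b>0$ is a constant, the rescaling is a homothety of the $uv$-plane, which scales $du^2 + dv^2$ by $b^2$ and hence preserves the conformal class $[du^2 + dv^2]$. Because harmonicity of a map from a surface depends only on the conformal class of the domain metric, $\nbf$ remains harmonic, and the entire reconstruction through Corollary \ref{cor:cmc-case} and Proposition \ref{prop:pre-cmc-helicoid} applies verbatim in the rescaled coordinates, producing the same surfaces merely relabelled. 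Thus the content of the proof is a bookkeeping check that each ingredient of \eqref{eq:cmc-helicoid} transforms into the corresponding ingredient of \eqref{eq:cmc-helicoid-rev}.

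First I would substitute $a = \mu b$ into the data of Proposition \ref{prop:cmc-helicoid}: the constant becomes $c_1 = \sqrt{(1-a^2)(b^2-1)} = \sqrt{(1-\mu^2 b^2)(b^2-1)}$, every elliptic modulus $a/b$ becomes $\mu$, and $a^2$ becomes $\mu^2 b^2$, which already reproduces the stated $c_1$, the column $\nbf_0$, and the integrand defining $g$ (the prefactor $c_1/b$ being unchanged). Next I would carry out the coordinate change, writing $\tilde u = bu$, $\tilde v = bv$ and then renaming $(\tilde u, \tilde v)$ back to $(u,v)$. Under this substitution the outer rotation $e^{\iii v}$ becomes $e^{\iii v/b}$, the pitch term $c_1 v$ becomes $c_1 v/b$, all elliptic arguments $(bu, a/b)$ collapse to $(u, \mu)$, and the one place requiring a moment's care is the linear term $(b^2-1)u$, which becomes $(b^2-1)(u/b) = (b - 1/b)u$, exactly matching the second entry of $\check \xbf_0$. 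Assembling these transformed pieces yields \eqref{eq:cmc-helicoid-rev} from \eqref{eq:cmc-helicoid}.

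It then remains to match the parameter domains and to treat the converse. I would verify that $(a,b) \mapsto (\mu, b) = (a/b, b)$ is a bijection from $\{\,0 \le a \le 1 \le b\,\}$ onto $\mathfrak M$: indeed $\mu = a/b \ge 0$ and $\mu \le 1/b$ follows from $a \le 1$, while the inverse $a = \mu b$ sends $0 \le \mu \le 1/b$ back to $0 \le a \le 1$. The forward assertion of the proposition is then immediate from the forward assertion of Proposition \ref{prop:cmc-helicoid}, and the converse follows by combining the converse already proved there with this bijection and with the fact that a constant rescaling of $(u,v)$ only reparametrizes, and does not alter, the underlying helicoid. The boundary values $a=0$ and $\mu = 0$ (the cylinder) are covered by the convention adopted after Lemma \ref{lem:gmap-cmc-helicoid}, so the closed range $0 \le \mu \le 1$, $1 \le b \le 1/\mu$ is handled as well.

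The argument has essentially no hard step; the only point genuinely demanding attention is the asymmetric way the homothety $(u,v) \mapsto (bu, bv)$ acts on the polynomial term $(b^2-1)u$ as opposed to the bounded elliptic terms, which is precisely the reason the factor $b - 1/b$ surfaces in the modified formula while the Jacobi-elliptic and pitch terms simply inherit the argument change. Once this single rescaling is tracked consistently through the second component of $\check \xbf_0$ and through the prefactors $e^{\iii v/b}$ and $c_1 v/b$, the equivalence of the two parametrizations is complete.
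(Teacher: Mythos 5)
Your proposal is correct and matches the paper's own treatment: the paper offers no separate proof of this proposition, merely announcing that it is obtained from Proposition \ref{prop:cmc-helicoid} ``by replacing $(bu,bv)$ by $(u,v)$ and $a/b$ by $\mu$,'' which is exactly the substitution and conformal-rescaling bookkeeping you carry out. Your explicit verification of the factor $b-1/b$ and of the bijection between parameter domains is a sound (and slightly more careful) rendering of the same argument.
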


\subsection{Boundedness by circular cylinders}\label{subsec:bddness}
In this paper, we say that a helicoidal surface $\mathcal H$ is 
{\it bounded} (or {\it bounded outward}) 
if there exists a circular cylinder $\mathcal C$
such that $\mathcal H$ is included inside of $\mathcal C$.  Similarly, 
 $\mathcal H$ is said to be {\it bounded inward} 
if there exists a circular cylinder $\mathcal C$
such that $\mathcal H$ is included outside of $\mathcal C$. 

Thanks to the formula \eqref{eq:cmc-helicoid-rev}, we can easily estimate the 
boundedness of cmc-$H$ helicoids. 

\begin{proposition}
Any helicoidal surface of positive constant Gaussian curvature is bounded. 
It is also bounded inward except for the parallel surface of an unduloid. 
\end{proposition}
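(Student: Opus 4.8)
The plan is to realize every helicoidal surface of positive constant Gaussian curvature as the cgc-companion $\check\xbf$ of a cmc-$H$ helicoid: a positive-cgc surface and its parallel cmc surface share the same symmetry group, so a helicoidal positive-cgc surface is the companion of a helicoidal cmc surface, which is classified in Proposition \ref{prop:cmc-helicoid-rev}. Dropping the $\nbf_0$-term from \eqref{eq:cmc-helicoid-rev}, the companion is
\[
 -2H\check\xbf = \begin{pmatrix} e^{\iii v/b} & 0 \\ 0 & 1 \end{pmatrix}\begin{pmatrix} e^{\iii g(u)} & 0 \\ 0 & 1 \end{pmatrix}\check\xbf_0(u) + \begin{pmatrix} 0 \\ c_1 v/b \end{pmatrix}.
\]
Since the surface is invariant under the helicoidal group with vertical axis, its distance from the $z$-axis depends only on $u$ and, because $e^{\iii v/b}$ and $e^{\iii g(u)}$ have unit modulus and the translation preserves the axis, equals $\tfrac{1}{2|H|}$ times the modulus of the $\C$-component of $\check\xbf_0(u)$. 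Thus the whole problem reduces to estimating the single function
\[
 r(u) = \frac{\mu b}{2|H|}\,\frac{\sqrt{(b\cn\dn)^2 + c_1^2\sn^2}}{\sqrt{1-\mu^2 b^2\sn^2}}(u,\mu), \qquad c_1^2 = (1-\mu^2 b^2)(b^2-1).
\]

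The main step is to simplify $r$. Substituting $\cn^2 = 1-\sn^2$ and $\dn^2 = 1-\mu^2\sn^2$ and abbreviating $s = \sn^2(u,\mu)$, the quantity under the square root in the numerator becomes the quadratic $\mu^2 b^2 s^2 - (1+\mu^2 b^4)s + b^2$. Its discriminant is the perfect square $(1-\mu^2 b^4)^2$, so its roots are exactly $s = b^2$ and $s = 1/(\mu^2 b^2)$. Writing the denominator as $1-\mu^2 b^2 s = -\mu^2 b^2\bigl(s - 1/(\mu^2 b^2)\bigr)$, the pole at $s = 1/(\mu^2 b^2)$ cancels the corresponding zero of the numerator, leaving the clean identity
\[
 \frac{(b\cn\dn)^2 + c_1^2\sn^2}{1-\mu^2 b^2\sn^2} = b^2 - \sn^2(u,\mu),
\]
hence $r(u) = \dfrac{\mu b}{2|H|}\sqrt{\,b^2 - \sn^2(u,\mu)\,}$.

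The conclusion is then immediate. On the parameter range $\mathfrak M$ we have $0 \le \sn^2(u,\mu) \le 1$ and $b \ge 1$, so
\[
 \frac{\mu b\sqrt{b^2-1}}{2|H|} \le r(u) \le \frac{\mu b^2}{2|H|}.
\]
The upper bound places $\check\xbf$ inside the circular cylinder of radius $\mu b^2/(2|H|)$, which proves that every such surface is bounded. For the inner bound we may assume $\mu>0$ (the value $\mu=0$ gives the degenerate companion of a cylinder, collapsing onto the axis): the lower bound is then strictly positive exactly when $b>1$, in which case $\check\xbf$ lies outside the circular cylinder of radius $\mu b\sqrt{b^2-1}/(2|H|)$. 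By Remark \ref{rem:nod-undul} the excluded value $b=1$ is precisely the unduloid, whose companion attains $r=0$ at the points with $\sn^2=1$ (the front singularities of Remark \ref{rem:cgc=wf}) and so touches the axis. Hence every positive-cgc helicoidal surface is bounded inward except for the parallel surface of an unduloid.

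The step I expect to demand the most care is the elliptic-function simplification in the second paragraph: one must spot that the numerator's discriminant is a perfect square and that its root at $s = 1/(\mu^2 b^2)$ is cancelled by the denominator, after which the entire expression collapses to $b^2 - \sn^2$. Once this identity is in hand, the boundedness statements are just the trivial estimate $0 \le \sn^2 \le 1$.
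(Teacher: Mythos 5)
Your proof is correct and follows essentially the same route as the paper: both reduce the statement to the single estimate $|p(\check\xbf)|^2 = \mu^2 b^2\,(b^2-\sn^2(u,\mu))/(2H)^2$ for the cgc-companion of \eqref{eq:cmc-helicoid-rev} and then use $0\le\sn^2\le 1$, excluding $\mu=0$ and identifying $b=1$ with the unduloid. The only difference is that you spell out the algebraic simplification (the factorization $\mu^2b^2s^2-(1+\mu^2b^4)s+b^2=(1-\mu^2b^2s)(b^2-s)$) that the paper dismisses as ``straightforward computation,'' and you add a short justification that every helicoidal positive-cgc surface is such a companion.
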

\begin{proof}
We may suppose a helicoidal surface of positive constant Gaussian curvature 
is given by the cgc-companion $\check \xbf$ of \eqref{eq:cmc-helicoid-rev}. 
We have only to prove the assertion for a special value of $H$, hence 
we assume $H = -1/2$ here. 

 For $\check \xbf = {}^t(x,y,z)$, set $p(\check \xbf) = {}^t(x,y,0)$. Then, 
by straightforward computation, we have  
\begin{align*}
 |p(\check \xbf)|^2 &= 
\left|
 \dfrac{\mu b( b \cn \dn - \iii c_1 \sn)}{\sqrt{1-\mu^2 b^2 \sn^2}} (u, \mu) 
\right|^2 \\
&= 
\mu^2 b^2 (b^2 - \sn^2(u, \mu) ). 
\end{align*}
Since $0 \le \sn^2(u, \mu) \le 1$, we have 
\begin{equation}\label{ineq:cgc}
 \mu b \sqrt{b^2-1} \le |p(\check \xbf)| \le \mu b^2.
\end{equation}
We may exclude the case $\mu =0$ where 
the image of $\check \xbf$ degenerates to a line. 
Thus, unless $\mu=0$, the image of $\check \xbf$ is 
inside a circular cylinder of radius $\mu b^2$, and 
outside a circular cylinder of radius $\mu b \sqrt{b^2 -1}$ if $b \ne 1$. 

If $b =1$, then the inner radius equals 0, indeed, 
$\xbf$ is an unduloid. 
\end{proof}
For a cgc-companion with a general value $H$ ($\ne 0$), 
the inequality \eqref{ineq:cgc} is 
\begin{equation*}
 \mu b \sqrt{b^2-1}/|2H| \le |p(\check \xbf)| \le \mu b^2/|2H|, 
\end{equation*}
that is, 
\begin{equation}\label{ineq:cgc-gen}
 \mu b \sqrt{b^2-1}/\sqrt{K} \le |p(\check \xbf)| \le \mu b^2/\sqrt{K}. 
\end{equation}

In \eqref{ineq:cgc-gen}, the equality can happen. 
We call $\mu b \sqrt{b^2-1}/\sqrt{K}$, $\mu b^2/\sqrt{K}$
 the \textit{inner radius}, \textit{outer radius} of $\check \xbf$, respectively. 
The outer radius is simply called the \textit{radius} 
as long as there is no confusion. 
\begin{proposition}[cf. Remark 4.15 of \cite{doCD}]
Any cmc-$H$ helicoid is bounded.  
\end{proposition}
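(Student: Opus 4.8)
The plan is to exploit the screw symmetry exactly as in the cgc-companion case treated just above: the distance from the $z$-axis will depend on $u$ alone, and then a triangle inequality finishes the estimate. I would begin from the normalized formula \eqref{eq:cmc-helicoid-rev}. The two diagonal factors $\bigl(\begin{smallmatrix} e^{\iii v/b} & 0 \\ 0 & 1\end{smallmatrix}\bigr)$ and $\bigl(\begin{smallmatrix} e^{\iii g(u)} & 0 \\ 0 & 1\end{smallmatrix}\bigr)$ act on the $\C$-component by multiplication by unimodular numbers, while the additional term ${}^t(0, c_1 v/b)$ affects only the $\R$-component. Hence, writing $p$ for the projection onto the $xy$-plane as in the preceding proof, $|p(-2H\xbf)|$ equals the modulus of the $\C$-component of $\nbf_0(u) + \check\xbf_0(u)$ and is therefore a function of $u$ only.

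Next I would split that $\C$-component into its two summands and bound each separately. The $\C$-component of $\nbf_0$ has modulus at most $|\nbf_0| = 1$, since $\nbf_0(u)$ is a unit vector. The $\C$-component of $\check\xbf_0$ is precisely the quantity whose modulus was computed in the previous proof, so by \eqref{ineq:cgc} it is bounded by $\mu b^2$. The triangle inequality then gives
\begin{equation*}
 |p(-2H\xbf)| \le 1 + \mu b^2,
\end{equation*}
so that $|p(\xbf)| \le (1+\mu b^2)/(2|H|)$ uniformly in $(u,v)$. Thus $\xbf$ lies inside the circular cylinder of radius $(1+\mu b^2)/(2|H|)$ about the $z$-axis, which is the assertion.

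I do not expect any genuine obstacle here, since both ingredients are already in hand: one is the trivial bound on a unit normal, the other the outer-radius estimate \eqref{ineq:cgc} obtained for the cgc-companion. If a sharp radius is wanted rather than mere boundedness, I would instead compute $|p(-2H\xbf)|^2$ in closed form; using the identity $b^2\cn^2\dn^2 + c_1^2\sn^2 = (b^2-\sn^2)(1-\mu^2 b^2\sn^2)$ that underlies the previous computation, one finds
\begin{equation*}
 |p(-2H\xbf)|^2 = 1 + \mu^2 b^4 - 2\mu^2 b^2 \sn^2(u,\mu) + 2\mu b^2 \cn\dn(u,\mu),
\end{equation*}
from which the exact range of the distance to the axis can be read off. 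This refinement is unnecessary for boundedness, so I would relegate it to a remark.
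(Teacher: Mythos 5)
Your argument is correct and establishes the stated claim, but it reaches it by a different (and lighter) route than the paper. The paper computes $|p(\xbf)|^2$ in closed form, substitutes $t=\cn(u,\mu)$, and bounds the resulting expression to obtain the two-sided estimate $|1-\mu b^2|\le|p(\xbf)|\le 1+\mu b^2$ of \eqref{ineq:cmc}; you instead split $p(-2H\xbf)$ into the $\C$-components of $\nbf_0$ and $\check\xbf_0$, bound the first by $1$ (unit normal) and the second by $\mu b^2$ via the already-proved \eqref{ineq:cgc}, and apply the triangle inequality. For the proposition as literally stated (boundedness, i.e.\ containment in a circular cylinder) your argument is complete and arguably cleaner, since it reuses the cgc-companion estimate. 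What it does not deliver is the lower bound $|1-\mu b^2|\le|p(\xbf)|$, which the paper's computation yields for free and which is essential to everything that follows (the definition of the inner radius, the characterization of zero inner radius via $\mu b^2=1$, and the pitch--radius parametrization in Section \ref{subsec:g-i}); your closing remark correctly identifies the closed-form computation needed to recover it. Incidentally, your expression $|p(-2H\xbf)|^2 = 1+\mu^2b^4-2\mu^2b^2\sn^2(u,\mu)+2\mu b^2\cn\dn(u,\mu)$ has the opposite sign on the cross term from the paper's displayed line; your sign is the correct one for the stated formula for $\check\xbf_0$, and the discrepancy is immaterial in any case because $\cn(u,\mu)$ ranges over all of $[-1,1]$, so both signs produce the same range and the same bounds \eqref{ineq:cmc}.
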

\begin{proof}
We may assume that a cmc-$H$ helicoid $\xbf$ is given by \eqref{eq:cmc-helicoid-rev}. 
We have only to prove the assertion for a special value of $H$, hence 
we assume $H = -1/2$ here. 

 For $\xbf = {}^t(x,y,z)$, set $p(\xbf) = {}^t(x,y,0)$. Then, 
by straightforward computation, we have  
\begin{align*}
 |p(\xbf)|^2 &= 
\left|
\sqrt{1-\mu^2 b^2 \sn^2(u,\mu)} + 
 \dfrac{\mu b( b \cn \dn - \iii c_1 \sn)}{\sqrt{1-\mu^2 b^2 \sn^2}} (u, \mu) 
\right|^2 \\
&= 
1+ \mu^2 b^4 - 2 \mu^2 b^2 \sn^2(u, \mu) -2 \mu b^2 \cn (u, \mu) \dn(u, \mu) \\ 
&= 
1+ \mu^2 b^4 - 2 \mu^2 b^2 + 2 \mu b^2 \{ 
\mu t^2 + t \sqrt{(1-\mu^2) + \mu^2 t^2 } \}. 
\end{align*}
where we set $t = \cn(u,\mu)$. 
On the other hand, one can verify  
\begin{equation*}
 \mu-1 \le \mu t^2 + t \sqrt{(1-\mu^2) + \mu^2 t^2} \le \mu + 1 
\quad \text{if} \quad 
0 \le \mu \le 1, -1 \le t \le 1. 
\end{equation*} 
Therefore, 
\begin{align*}
 1+\mu^2 b^4 -2 \mu^2 b^2 +2 \mu b^2 (\mu -1)
\le |p(\xbf)|^2 \le 
1+\mu^2 b^4 -2 \mu^2 b^2 +2 \mu b^2 (\mu +1) , 
\end{align*}
Hence
\begin{equation*}
 (1 - \mu b^2)^2 \le |p(\xbf)|^2 \le (1 + \mu b^2)^2 , 
\end{equation*}
i.e., 
\begin{align}\label{ineq:cmc}
| 1 - \mu b^2 | \le |p(\xbf)| \le 1+\mu b^2. 
\end{align}
Thus, $\xbf$ is 
inside a circular cylinder of radius $1+\mu b^2$ and 
outside a circular cylinder of radius $|1-\mu b^2|$.
\end{proof}
For a cmc-$H$ helicoid with a general value $H$ ($\ne 0$), 
the inequality \eqref{ineq:cmc} is 
\begin{align}\label{ineq:cmc-gen}
| 1 - \mu b^2 |/|2H| \le |p(\xbf)| \le (1+\mu b^2)/|2H|. 
\end{align}

Note that the equalities in \eqref{ineq:cmc-gen} can happen. 
We call $|1- \mu b^2|/|2H|$, $1+\mu b^2/|2H|$
the \textit{inner radius}, \textit{outer radius} of $\xbf$, respectively. 
The outer radius is simply called the \textit{radius} 
as long as there is no confusion. 

In \cite{doCD}, the inequality \eqref{ineq:cmc-gen} was already pointed out, 
however it was not fully discussed there whether $\xbf$ can have 
zero inner radius (see Remark 4.15 in \cite{doCD}). 
The inequality \eqref{ineq:cmc-gen} tells us that 
 a cmc-$H$ helicoid $\xbf$ has inner radius 0 if and only if $\mu b^2 = 1$. 
In other words, a cmc-$H$ helicoid $\xbf$ is bounded inward 
if and only if $\mu b^2 \ne 1$.   
Thus, a cmc-$H$ helicoid $\xbf$ is bounded both inward and outward, 
unless $\mu b^2 =1$. 
We also note that the case where $\mu b^2 =1$ does actually occurs. In fact, 
we can assert the following: 
\begin{proposition}
 Except for the circular cylinders, 
each associated family includes a cmc-$H$ helicoid with zero inner radius. 
\end{proposition}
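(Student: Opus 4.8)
The plan is to recall two facts already established in this section and then reduce the statement to an elementary interval-membership check. First, from the discussion following \eqref{eq:fff-Q-cmchelicoid}, two cmc-$H$ helicoids belong to the same associated family if and only if they share the same value of $\mu$; hence each associated family is exactly the slice $\{(\mu,b)\mid 1\le b\le 1/\mu\}$ of the parameter space $\mathfrak{M}$ for a fixed $\mu$. Second, the circular cylinders are precisely the members with $\mu=0$, so the hypothesis ``except for the circular cylinders'' allows me to restrict attention to $\mu\in(0,1]$. Finally, the inequality \eqref{ineq:cmc-gen} shows that a cmc-$H$ helicoid has zero inner radius if and only if $\mu b^2=1$.

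Combining these, for a fixed $\mu\in(0,1]$ I would solve $\mu b^2=1$, obtaining the candidate $b=1/\sqrt{\mu}$, and then verify that this value actually lies in the admissible range $[1,1/\mu]$. Concretely, $1/\sqrt{\mu}\ge 1$ is equivalent to $\mu\le 1$, and $1/\sqrt{\mu}\le 1/\mu$ is equivalent to $\mu\le\sqrt{\mu}$, that is, again to $\mu\le 1$; both hold for every $\mu\in(0,1]$. Therefore the member of the associated family with parameters $(\mu,1/\sqrt{\mu})$ satisfies $\mu b^2=1$ and so has inner radius $0$.

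Since this exhibits, for each $\mu\in(0,1]$, an explicit helicoid in the corresponding associated family with zero inner radius, the proposition follows. I do not expect any genuine obstacle here: the entire content is the observation that the zero-inner-radius locus $\mu b^2=1$ meets every non-cylindrical slice of $\mathfrak{M}$, which is immediate from $0<\mu\le 1$. I would only remark in closing that at the extreme value $\mu=1$ the admissible range collapses to the single point $b=1$, corresponding to the round sphere, whose inner radius $|1-\mu b^2|/|2H|$ is indeed $0$; thus the statement holds even in this boundary case.
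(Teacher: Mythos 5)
Your proof is correct and follows essentially the same route as the paper: the paper's proof likewise fixes $\mu\ne 0$ and notes that there is a unique $b$ with $(\mu,b)\in\mathfrak{M}$ and $\mu b^{2}=1$. You merely make explicit the computation $b=1/\sqrt{\mu}$ and the check $1\le 1/\sqrt{\mu}\le 1/\mu$ that the paper leaves implicit, which is a reasonable amount of added detail.
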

\begin{proof}
 Let $\xbf$ be a cmc-$H$ helicoid determined by $(\mu,b) \in \mathfrak M$, 
and assume $\xbf$ is not a circular cylinder, i.e., $\mu \ne 0$.  
If we fix $\mu$, then there exists a unique $b$ 
such that $(\mu,b) \in \mathfrak M$ and $\mu b^2 =1$. 
\end{proof}
If a cmc-$H$ helicoid $\xbf$ has zero inner radius, then there exists a point on 
$\xbf(M) \cap \text{$z$-axis}$. The trajectory of such a point by a helicoidal 
motion forms a line coincident with $z$-axis. Thus we can assert as follows:   
\begin{corollary}
 Each associated family includes a cmc-$H$ helicoid 
on which a straight line lies. 
\end{corollary}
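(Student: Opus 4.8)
The plan is to string together three facts already established in the excerpt. First, I would recall the boundedness proposition immediately preceding this corollary, which shows (see \eqref{ineq:cmc-gen}) that a cmc-$H$ helicoid $\xbf$ has zero inner radius precisely when $\mu b^2 = 1$, and that in this case the projection $p(\xbf)$ attains the value $0$ for some parameter $u$. Geometrically, $p(\xbf) = {}^t(x,y,0) = 0$ means the point $\xbf(u,v)$ lies on the $z$-axis. Second, I would invoke the proposition just proved, that every associated family other than the circular cylinders contains a cmc-$H$ helicoid with $\mu b^2 = 1$, hence with zero inner radius. The only genuine content of the corollary is to translate the analytic statement ``zero inner radius'' into the geometric statement ``a straight line lies on the surface.''

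The key observation, already foreshadowed in the sentence preceding the corollary, is that if a point of $\xbf(M)$ lies on the $z$-axis, then the helicoidal motion $M_\lambda(v)$ carries it along the axis. Concretely, I would argue as follows. Fix a cmc-$H$ helicoid with zero inner radius, so there is a parameter value $u_0$ with $p(\xbf(u_0,v)) = 0$ for the $v$ at hand; in fact since the modulus $|p(\xbf)|$ in \eqref{ineq:cmc-gen} depends only on $u$, the entire $u=u_0$ coordinate curve projects to the origin, meaning $\xbf(u_0,v)$ has vanishing $x$- and $y$-components for every $v$. Therefore the curve $v \mapsto \xbf(u_0,v)$ is contained in the $z$-axis. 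Because $M_\lambda(v)$ fixes the $z$-axis pointwise up to the vertical translation by $\lambda v$, this coordinate curve is precisely the $z$-axis (or a segment of it), which is a straight line lying on $\xbf(M)$.

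The step requiring the most care is verifying that the whole coordinate curve $u = u_0$, and not merely an isolated point, lies on the axis. This follows because the formula \eqref{ineq:cmc-gen} bounds $|p(\xbf)|$ by a quantity depending only on $u$ through $\sn$, $\cn$, $\dn$; when $\mu b^2 = 1$ the lower bound $|1 - \mu b^2|/|2H|$ is $0$ and is attained at the $u$-values where $\cn(u,\mu)$ hits the appropriate extremum, and there $p(\xbf)$ genuinely vanishes. One must confirm that the vanishing of the modulus forces both planar components to vanish simultaneously for all $v$, which is immediate from the structure of \eqref{eq:cmc-helicoid-rev}: the planar part is $e^{\iii v/b}$ times a function of $u$ alone, so its modulus is independent of $v$, and hence it is zero for all $v$ exactly when it is zero for one. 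I therefore expect no real obstacle beyond carefully citing the preceding proposition and reading off the geometric meaning of $p(\xbf)=0$; the corollary is essentially a restatement of the already-proven existence of a member with $\mu b^2 = 1$ in each non-cylindrical associated family.
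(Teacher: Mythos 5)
Your proof is correct and takes essentially the same route as the paper: the paper likewise combines the preceding proposition (each non-cylindrical associated family contains a member with $\mu b^2=1$, hence zero inner radius) with the remark that the helicoidal trajectory of a point lying on the $z$-axis is the $z$-axis itself. Your added verification that attainment of the lower bound in \eqref{ineq:cmc-gen} puts the entire $v$-coordinate curve (not just one point) on the axis is exactly the detail the paper leaves implicit.
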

In Figures \ref{fig:zeroi-a} and \ref{fig:zeroi-b},   
cmc-$H$ helicoids on which a line (the axis of helicoidal motions) lies
are shown.   
The cmc-$H$ helicoid in Figure \ref{fig:zeroi-b} has a period 
(periods will be explained in the next subsection).  
\begin{figure}[htbp]
\begin{tabular}{cc}
\begin{minipage}{0.5\hsize}
\centering
\includegraphics[width=4cm,clip]{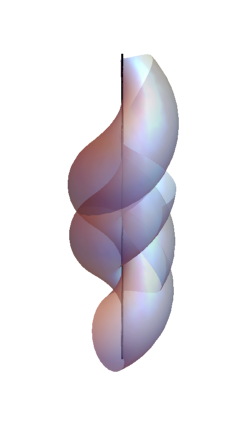}  
\caption{ 
}
\label{fig:zeroi-a}
\end{minipage}
\quad
\begin{minipage}{0.5\hsize}
\centering
\includegraphics[width=4cm,clip]{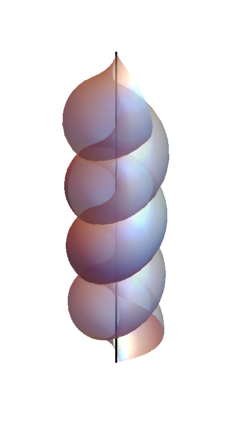} 
\caption{}
\label{fig:zeroi-b}
\end{minipage}
\end{tabular}
\end{figure}

\subsection{Cmc helicoidal cylinders}
It can happen that a helicoidal surface $\xbf$ has a period, that is, 
it reduces to a map from $S^1 \times \R$ to $\E^3$. 
A helicoidal surface having a period is called a \textit{helicoidal cylinder}. 
We show that both cmc-$H$ helicoids and their cgc-companions 
can have periods in some cases.  

In this section, let $H =-1/2$, and let 
\begin{equation}\label{eq:complete-ei}
K:=K(\mu),\quad E:=E(\mu), \quad \varPi := \varPi(\mu^2 b^2, \mu)  
\end{equation}
denote the complete elliptic integrals 
of the first, second and third kind, respectively.

\begin{lemma}\label{lem:quasi-periodic}
For any fixed $(\mu , b) \in \mathfrak M$, 
the following equations of quasi-periodicity hold: 
\begin{align*}
g(u+2K) &= g(u) + \frac{2c_1}{b} \varPi , \\
 \nbf_{0}(u +2K) &= 
\begin{pmatrix}
 1 & 0 \\ 0 & -1
\end{pmatrix}
 \nbf_{0}(u) , \\ 
\check \xbf_{0}(u+2K) &=
\begin{pmatrix}
 -1 & 0 \\ 0 & 1
\end{pmatrix}
\check \xbf_{0}(u)
+
\begin{pmatrix}
0 \\
2(b-\frac{1}{b}) K -2 b E
\end{pmatrix}. 
\end{align*}
\end{lemma}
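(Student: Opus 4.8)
The plan is to treat the lemma as a direct computation resting on two classical facts: the half-period behaviour of Jacobi's elliptic functions under $u \mapsto u+2K$, and the translation-by-$\pi$ formulas for the incomplete elliptic integrals of the second and third kinds. First I would record the standard quasi-periodicity relations
\begin{equation*}
\begin{aligned}
&\sn(u+2K,\mu) = -\sn(u,\mu), \quad \cn(u+2K,\mu) = -\cn(u,\mu), \\
&\dn(u+2K,\mu) = \dn(u,\mu), \quad \am(u+2K,\mu) = \am(u,\mu) + \pi,
\end{aligned}
\end{equation*}
with $K=K(\mu)$; these are the only properties of the elliptic functions themselves that the proof needs.

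For $\nbf_0$ I would simply note that its first entry $\sqrt{1-\mu^2 b^2 \sn^2(u,\mu)}$ depends on $\sn$ only through $\sn^2$, hence is invariant under $u\mapsto u+2K$, whereas its second entry $-\mu b\,\sn(u,\mu)$ changes sign. This is precisely left-multiplication by $\mathrm{diag}(1,-1)$, giving the second displayed identity. For the first entry of $\check\xbf_0$, the denominator $\sqrt{1-\mu^2 b^2\sn^2}$ is invariant for the same reason, while in the numerator $b\cn\dn-\iii c_1\sn$ the product $\cn\dn$ and the term $\sn$ each acquire a factor $-1$; the entire entry is therefore negated, which accounts for the upper-left $-1$ of the matrix $\mathrm{diag}(-1,1)$.

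The remaining two identities, the quasi-period of $g$ and the second entry of $\check\xbf_0$, reduce to additivity of the incomplete integrals under a shift of the amplitude by $\pi$. Since the integrands $1/[(1-n\sin^2\theta)\sqrt{1-\mu^2\sin^2\theta}]$ and $\sqrt{1-\mu^2\sin^2\theta}$ are $\pi$-periodic and symmetric about $\pi/2$, one has
\begin{equation*}
\varPi(\varphi+\pi,n,\mu) = \varPi(\varphi,n,\mu) + 2\varPi(n,\mu), \qquad E(\varphi+\pi,\mu) = E(\varphi,\mu) + 2E,
\end{equation*}
where $\varPi(n,\mu)$ and $E=E(\mu)$ are the complete integrals. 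Substituting $\varphi=\am(u,\mu)$, using $\am(u+2K,\mu)=\am(u,\mu)+\pi$ and taking $n=\mu^2 b^2$, I would turn $g(u+2K)$ into $g(u)+(2c_1/b)\varPi$ and $bE(\am(u+2K,\mu),\mu)$ into $bE(\am(u,\mu),\mu)+2bE$; combined with the elementary shift $(b-1/b)(u+2K)=(b-1/b)u+2(b-1/b)K$ in the first summand of that entry, this produces exactly the claimed translation vector ${}^t(0,\ 2(b-\tfrac1b)K-2bE)$.

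No step is genuinely difficult; the only points that demand care are the bookkeeping of signs, so that the two diagonal sign-matrices come out with the correct pattern ($\mathrm{diag}(1,-1)$ for $\nbf_0$ versus $\mathrm{diag}(-1,1)$ for $\check\xbf_0$), and the normalization of the complete integrals as half-period quantities, so that the factors of $2$ in $2\varPi$ and $2E$ appear with the right coefficients. I would verify both against the integral definitions of $\varPi$ and $E$ recorded earlier rather than rely on any remembered sign convention.
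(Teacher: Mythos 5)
Your proposal is correct and is exactly the argument the paper has in mind: the paper omits the proof as "straightforward by the definition of $g,\nbf_0,\check\xbf_0$ and the (quasi-)periodicity of elliptic integrals and Jacobi's elliptic functions," and your computation supplies precisely those details (the $2K$-shift rules for $\sn,\cn,\dn,\am$ and the $\pi$-shift additivity of the incomplete integrals $E$ and $\varPi$). The sign bookkeeping and the resulting matrices $\mathrm{diag}(1,-1)$, $\mathrm{diag}(-1,1)$ and translation vector all check out.
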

A proof of Lemma \ref{lem:quasi-periodic} 
is straightforward by the definition of $g, \nbf_0 , \check \xbf_0$ 
(cf. Proposition \ref{prop:cmc-helicoid-rev}) and the (quasi-)periodicity of 
elliptic integrals and Jacobi's elliptic functions. 
So the proof is omitted here and is left to the reader. 
\begin{proposition}\label{prop:period-cond}
Let $\check \xbf$ be a cgc-companion of a cmc-$H$ helicoid 
determined by $(\mu,b) \in \mathfrak M$. Then 
$\check \xbf$ 
is periodic if and only if 
\begin{equation}\label{eq:peri-cond-cgc}
 \Phi(\mu,b) :=\frac{c_1^2 \varPi +b^2E +(1-b^2)K}{\pi b c_1} \in \mathbb Q, 
\end{equation}
where $c_1=\sqrt{(1-\mu^2b^2)(b^2-1)}$ and $K, E, \varPi$ are 
complete elliptic integrals \eqref{eq:complete-ei}. 
\end{proposition}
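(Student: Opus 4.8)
The plan is to read off the period directly from the quasi-periodicity in Lemma~\ref{lem:quasi-periodic} and to translate the phrase ``closes up into $S^1\times\R$'' into a rationality condition on a residual rotation angle about the axis. Throughout I take $H=-\tfrac12$, so that by Proposition~\ref{prop:cmc-helicoid-rev} the cgc-companion is
\begin{equation*}
 \check\xbf(u,v)=
 \begin{pmatrix} e^{\iii v/b} & 0 \\ 0 & 1 \end{pmatrix}
 \begin{pmatrix} e^{\iii g(u)} & 0 \\ 0 & 1 \end{pmatrix}
 \check\xbf_0(u)
 +\begin{pmatrix} 0 \\ c_1 v/b \end{pmatrix}.
\end{equation*}
The first thing I would record is that the $v$-dependence is exactly a one-parameter group $G$ of helicoidal (screw) motions of pitch $c_1$ about the $z$-axis: $\check\xbf(u,v+s)$ is $\check\xbf(u,v)$ rotated by $s/b$ and translated by $c_1 s/b$ along $z$. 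Hence the image is $G$-invariant, and the periodicity of $\check\xbf$ is a statement about the profile curve obtained after passing to the quotient by $G$.

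Next I would compute the effect of the $u$-shift $u\mapsto u+2K$. Feeding the three identities of Lemma~\ref{lem:quasi-periodic} into the display, the factor $e^{\iii g(u+2K)}=e^{\iii(g(u)+\frac{2c_1}{b}\varPi)}$ together with the sign matrices $\mathrm{diag}(1,-1)$ and $\mathrm{diag}(-1,1)$ collapses --- using $-1=e^{\iii\pi}$ --- into a single \emph{fixed} screw motion $T$: a rotation through $\Phi_0:=\frac{2c_1}{b}\varPi+\pi$ about the $z$-axis composed with the vertical translation $\Delta z:=2(b-\tfrac1b)K-2bE$, so that $\check\xbf(u+2K,v)=T\,\check\xbf(u,v)$ for all $(u,v)$. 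Since $T$ and every element of $G$ lie in the abelian group of $z$-axis rotations and $z$-translations, $T$ commutes with $G$ and descends to the quotient. Composing $T$ with the element of $G$ that cancels its vertical part shows that, modulo $G$, the motion $T$ is a \emph{pure rotation} about the axis through the angle $\theta_\ast:=\Phi_0-\Delta z/c_1$ (so that, passing to the orbit cylinder $\E^3/G$ of circumference $2\pi c_1$, one $u$-period $2K$ advances the reduced height by $-c_1\theta_\ast$).

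With this normal form the equivalence is then essentially immediate. The radius $|p(\check\xbf)|=\mu b\sqrt{b^2-\sn^2(u,\mu)}$ computed in Section~\ref{subsec:bddness} is a non-constant function of minimal $u$-period $2K$, so any genuine period of $\check\xbf$ must be an integer multiple $m$ of $2K$ carrying an accompanying screw motion; applying $T^m$ shows that $\check\xbf$ closes up after $m$ such periods exactly when $T^m\in G$, i.e.\ when $m\theta_\ast\in 2\pi\mathbb{Z}$. Thus $\check\xbf$ is periodic if and only if $\theta_\ast/2\pi\in\mathbb{Q}$. The final step is the bookkeeping: substituting $\Phi_0$ and $\Delta z$, using $c_1^2=(1-\mu^2b^2)(b^2-1)$ and $\frac{b^2-1}{b}K-bE=\frac{1}{b}\big((b^2-1)K-b^2E\big)$, one gets $\frac{\theta_\ast}{2\pi}=\Phi(\mu,b)+\tfrac12$, whence $\theta_\ast/2\pi\in\mathbb{Q}\iff\Phi(\mu,b)\in\mathbb{Q}$, which is exactly \eqref{eq:peri-cond-cgc}.

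The main obstacle I anticipate is not any single calculation but the careful passage from the quasi-periodic normal form to a clean ``if and only if.'' One must confirm that no accidental period shorter than an integer multiple of $2K$ can occur --- handled by the minimal period $2K$ of the radius, which forces $0<\mu<1$ and $c_1\neq0$ --- and that the residual angle $\theta_\ast$ is well defined only modulo $2\pi$, so that the extra $\pi$ coming from $-1=e^{\iii\pi}$ (which produces the harmless additive $\tfrac12$) cannot affect rationality. The degenerate values $c_1=0$ (the rotational cases $b=1$ and $\mu b=1$, including the round sphere $(\mu,b)=(1,1)$) and $\mu=0$ (the circular cylinders) should be treated separately: there $G$ reduces to a pure rotation group, the surface is automatically a surface of revolution and hence periodic, and the quantity $\Phi(\mu,b)$ is not defined.
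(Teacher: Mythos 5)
Your proof is correct and is essentially the paper's argument in different clothing: your screw motion $T$ for the shift $u\mapsto u+2K$ and the reduction to $m\theta_*\in 2\pi\mathbb{Z}$ after cancelling the vertical translation by an element of $G$ is exactly the paper's system \eqref{eq:per-cond-cgc-helicoid-0} with $h$ eliminated, and your bookkeeping $\theta_*/2\pi=\Phi(\mu,b)+\tfrac12$ matches. The one genuine addition is your use of the minimal $u$-period $2K$ of the radius $|p(\check\xbf)|$ to show that \emph{any} period must be of the form $(2mK,h)$, which makes the ``periodic $\Rightarrow\Phi\in\mathbb{Q}$'' direction airtight, whereas the paper's proof only establishes the equivalence between $\Phi\in\mathbb{Q}$ and the solvability of \eqref{eq:per-cond-cgc-helicoid-0} and leaves that reduction implicit.
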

\begin{proof}
 It follows from Proposition \ref{prop:cmc-helicoid-rev} and Lemma \ref{lem:quasi-periodic} 
that 
\begin{align*}
 & \check \xbf(u+2mK,v+h) \\
=& 
\begin{pmatrix}
 e^{\iii(2 m  c_1 \varPi +  h)/b + m \iii \pi} & 0 \\ 0 & 1
\end{pmatrix}
\check \xbf(u,v) 
+ 
\begin{pmatrix}
 0 \\ c_1 h/b + 2m \{ (b-\frac{1}{b})K -b E \}
\end{pmatrix}
\end{align*}
for $m \in \mathbb Z$ and $h \in \R$. Hence, if we 
were able to choose $m, h$ so that 
\begin{equation}\label{eq:per-cond-cgc-helicoid-0}
\begin{cases}
\frac{1}{b}(2 m  c_1 \varPi +  h) + m \pi \in 2 \pi \mathbb Z \\ 
2m \{ (b-\frac{1}{b})K -b E \} +  \frac{c_1 h}{b} =0
\end{cases}
 \end{equation}
then 
\begin{equation*}
 \check \xbf(u+2mK,v+h) = \check \xbf (u,v)
\end{equation*}
holds. 

Assume first that there exist $m, h$ satisfying \eqref{eq:per-cond-cgc-helicoid-0}. Then 
\begin{equation}\label{eq:h=}
 h = -\frac{2m}{c_1} \{ (b^2-1)K -b^2E \}
= \frac{2m}{c_1} \{b^2E + (1 - b^2)K \}.   
\end{equation}
Substituting \eqref{eq:h=} into the first equation 
of \eqref{eq:per-cond-cgc-helicoid-0}, 
we have 
\begin{equation*}
m(\frac{c_1}{b} \varPi + \frac{\pi}{2}) 
+\frac{m}{b c_1} \{b^2E+ (1-b^2)K \}
  \in \pi \mathbb Z,  
\end{equation*}
that is, 
\begin{equation*}
 m \pi \Phi(\mu,b) + \frac{m}{2} \pi \in \pi \mathbb Z. 
\end{equation*}
Therefore the condition \eqref{eq:peri-cond-cgc} holds.

Conversely we suppose \eqref{eq:peri-cond-cgc}. Then 
$\Phi(\mu,b) + \frac{1}{2} \in \mathbb Q$, that is,   
there exist mutually prime integers 
$p, q$ such that     
\begin{equation*}
 \Phi(\mu,b) + \frac{1}{2} = \frac{q}{p}, 
\end{equation*}
namely, 
\begin{equation*}
( \frac{c_1}{b} \varPi + \frac{\pi}{2}) 
+ \frac{1}{b c_1} \{b^2E + (1-b^2)K \} = \frac{q}{p} \pi.   
\end{equation*}
Hence, 
\begin{equation*}
p ( \frac{c_1}{b} \varPi + \frac{\pi}{2}) 
+\frac{p}{b c_1} \{b^2 E + (1-b^2)K \} \in  \pi \mathbb Z.
\end{equation*}
Therefore, 
\begin{equation}\label{eq:m-h}
 m:=p, \quad h := \frac{2 p}{c_1} \{b^2 E + (1-b^2)K \}
\end{equation}
satisfy \eqref{eq:per-cond-cgc-helicoid-0}. 
\end{proof}

\begin{corollary}
There exist infinitely many non-congruent helicoidal cylinders 
with constant positive Gaussian curvature, 
i.e.,  periodic helicoidal wave fronts with constant positive Gaussian curvature.  
\end{corollary}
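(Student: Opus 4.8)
The plan is to produce the required cylinders as cgc-companions and to separate them by the radius estimate \eqref{ineq:cgc-gen}, using the periodicity criterion of Proposition \ref{prop:period-cond}. Throughout I normalize $H=-1/2$, so that every cgc-companion has the same constant Gaussian curvature $4H^2=1$ and, by Remark \ref{rem:cgc=wf}, is a wave front. I would freeze the associated-family parameter at a value $\mu_0\in(0,1)$ and let $b$ vary over the open interval $(1,1/\mu_0)$, i.e. move inside a single associated family. There $c_1=\sqrt{(1-\mu_0^2b^2)(b^2-1)}$ is strictly positive; in \eqref{eq:peri-cond-cgc} the quantities $K=K(\mu_0)$ and $E=E(\mu_0)$ are constants, while $\varPi=\varPi(\mu_0^2b^2,\mu_0)$ is real-analytic in $b$ because $\mu_0^2b^2<1$. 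Hence $\Phi(\mu_0,\cdot)$ is continuous and finite on $(1,1/\mu_0)$.

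First I would examine the limit $b\to1^+$. Then $b^2-1\to0^+$ forces $c_1\to0^+$ and $c_1^2\to0$, while $\varPi(\mu_0^2b^2,\mu_0)$ remains finite (its first argument tends to $\mu_0^2<1$); thus the numerator $c_1^2\varPi+b^2E+(1-b^2)K$ tends to $E(\mu_0)>0$ and the denominator $\pi b c_1\to0^+$, so $\Phi(\mu_0,b)\to+\infty$. Picking any interior $b_*$, the value $\Phi(\mu_0,b_*)$ is finite, so by the intermediate value theorem $\Phi(\mu_0,\cdot)$ takes every value of the unbounded interval $[\Phi(\mu_0,b_*),+\infty)$ on $(1,b_*]$. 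This interval contains infinitely many rationals, hence there are infinitely many distinct $b_1,b_2,\dots\in(1,b_*)$ with $\Phi(\mu_0,b_n)\in\mathbb Q$. By Proposition \ref{prop:period-cond} each corresponding cgc-companion $\check\xbf_n$ is periodic, i.e. a helicoidal cylinder, and it is a wave front of constant positive Gaussian curvature.

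It remains to extract infinitely many mutually non-congruent ones. Since all $\check\xbf_n$ share the same constant Gaussian curvature they are locally isometric, so the distinction must be extrinsic, and here I would invoke the outer radius of \eqref{ineq:cgc-gen}, which for $H=-1/2$ equals $\mu_0b^2$. As this is the maximal distance of the surface from its (uniquely determined) helicoidal axis, it is preserved by rigid motions; and $b\mapsto\mu_0b^2$ is strictly increasing, so the radii $\mu_0b_n^2$ are pairwise distinct and the $\check\xbf_n$ are pairwise non-congruent. This yields the asserted infinite non-congruent family.

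The step I expect to be the main obstacle is the non-congruence claim, precisely the assertion that the outer radius is a bona fide congruence invariant. This rests on the uniqueness of the screw axis of each $\check\xbf_n$, which in turn requires ruling out rotational or cylindrical symmetry; both are excluded here since $\mu_0\neq0$ and $b\neq1$ keep the pitch $c_1/b$ nonzero and the profile non-circular. A secondary point requiring care is the boundary computation: one must confirm that $\varPi(\mu_0^2b^2,\mu_0)$ stays bounded as $b\to1^+$, so that the $c_1^2\varPi$ term is negligible and does not spoil the divergence of $\Phi$.
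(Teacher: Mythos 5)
Your proposal is correct and follows essentially the same route as the paper: the paper's own proof simply observes that $\Phi(\mu,b)$ is non-constant and real-analytic, hence hits $\mathbb{Q}$ infinitely often, and invokes Proposition \ref{prop:period-cond}; you make this concrete by exhibiting the divergence $\Phi(\mu_0,b)\to+\infty$ as $b\to1^+$ and applying the intermediate value theorem. The only substantive difference is in the non-congruence step, where you use the outer radius $\mu_0 b^2$ as an extrinsic invariant, whereas the paper relies on its earlier observation (via the Hopf differential in \eqref{eq:fff-Q-cmchelicoid}) that surfaces with the same $\mu$ and distinct $b$ are isometric but non-congruent --- both arguments work, and yours is self-contained modulo the uniqueness of the screw axis, which you correctly flag and dispose of.
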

\begin{proof}
The function $\Phi(\mu, b)$ is non-constant 
and real-analytic in $(\mu, b)$. 
Hence, there exist infinitely many $(\mu, b)$ such that 
$\Phi(\mu, b) \in \mathbb Q$. 
\end{proof}

From now on, by a \textit{helicoidal cgc-cylinder}, 
we mean a helicoidal cylinder with positive constant Gaussian curvature. 

When \eqref{eq:peri-cond-cgc} is satisfied, there exist 
$m \in \mathbb Z$ and $h \in \R$ 
such that 
\begin{equation}\label{eq:per-cgc-co}
 \check \xbf (u+2m K, v+h) = \check \xbf (u,v)
\end{equation}
holds for any $u, v$. We always choose $m, h$ so that $m$ is the minimum 
positive integer. Then $m$ equals the number of cuspidal edges of $\check \xbf$ 
(cf. Remark \ref{rem:cgc=wf}). 
Thus $\check \xbf$ is (non-)co-orientable if $m$ is even (odd). 
We can verify it precisely as follows: 
At the same time to \eqref{eq:per-cgc-co}, 
\begin{equation*}
 \nbf (u+2m K, v+h) = (-1)^m \nbf (u,v). 
\end{equation*}
This implies that if $m$ is even, then $\nbf$ has the same period as $\check \xbf$, 
but if $m$ is odd, then $\nbf$ reverse its direction. 
Namely, if $m$ is chosen to be an odd number then $\check \xbf$ is non-co-orientable. 
(A wave front is said to be \textit{co-orientable} if it posses 
a global unit normal field. For details, see \cite{KU}, etc.)
Hence we can assert that there are infinitely many 
non-co-orientable helicoidal cgc-cylinders. 

\begin{example}
 Let $\mu = 1/2$. The graph of the function $b \mapsto \Phi(1/2, b)+1/2$ is  
as in Figure \ref{fig:graph}. 
\begin{figure}[htbp]
\centering
\includegraphics[width=8cm,clip]{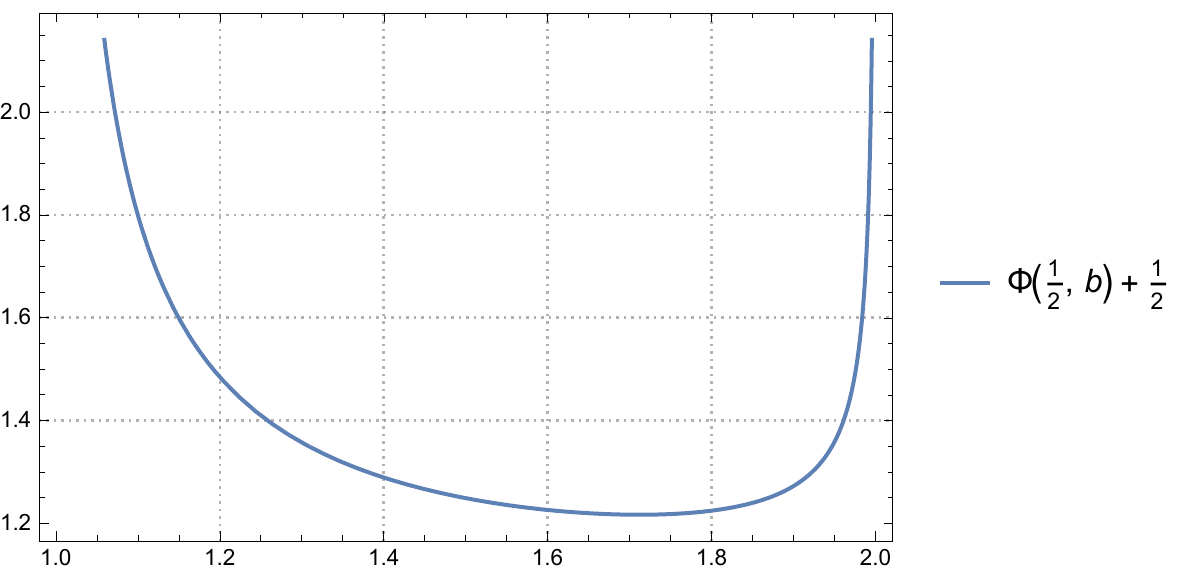} 
\caption{}
\label{fig:graph}
\end{figure}

First, we consider an equation $\Phi(1/2, b)+1/2 = 2 (=2/1)$. This equation 
has two distinct solutions $b =b_1, b_2$.  
They are approximately $b_1 \approx 1.07213$, $b_2 \approx 1.99434$.  

For $b_1$, the formula \eqref{eq:m-h} indicates $m=1$, $h =h_1 \approx 8.7932$ 
which satisfy \eqref{eq:per-cond-cgc-helicoid-0}.  
For $b_2$, the formula \eqref{eq:m-h} indicates $m=1$, $h =h_2 \approx 12.6016$ 
which satisfy \eqref{eq:per-cond-cgc-helicoid-0}.  
Since $m=1$, they are non-co-orientable helicoidal cgc-cylinders having 
a single cuspidal edge. Indeed, 
graphics of helicoidal cgc-cylinders of $(\mu,b) = (1/2,b_1), (1/2, b_2)$ are as 
in Figures \ref{fig:cgc-a}, \ref{fig:cgc-b}. 
\begin{figure}[htbp]
\begin{tabular}{cc}
\begin{minipage}{0.45\hsize}
\centering
 \includegraphics[width=5cm,clip]{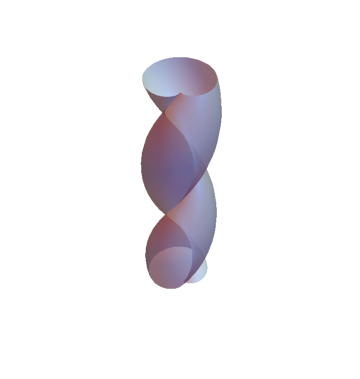}  
\caption{$(\mu,b) = (1/2,b_1)$}
\label{fig:cgc-a}
\end{minipage}
\quad
\begin{minipage}{0.45\hsize}
\centering
 \includegraphics[width=5cm,clip]{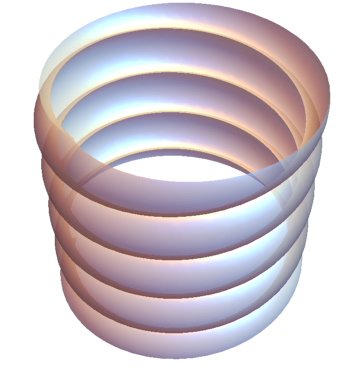} 
\caption{$(\mu,b) = (1/2, b_2)$}
\label{fig:cgc-b}
\end{minipage}
\end{tabular}
\end{figure}

In the next place, we consider an equation $\Phi(1/2, b)+1/2 = 3/2$. 
This equation has two distinct solutions $b =b_1', b_2'$.  
They are approximately $b_1' \approx 1.19174$, $b_2' \approx 1.97619$.

For $b_1'$, the formula \eqref{eq:m-h} indicates $m=2$, $h =h_1' \approx 10.57012$ 
which satisfy \eqref{eq:per-cond-cgc-helicoid-0}.  
For $b_2'$, the formula \eqref{eq:m-h} indicates $m=2$, $h =h_2' \approx 12.70952$ 
which satisfy \eqref{eq:per-cond-cgc-helicoid-0}.  
Since $m=2$, they are co-orientable helicoidal cgc-cylinders 
having two cuspidal edges. Indeed, 
graphics of helicoidal cgc-cylinders of $(\mu,b) = (1/2,b_1'), (1/2, b_2')$ are as 
in Figures \ref{fig:cgc2-a}, \ref{fig:cgc2-b}. 
\begin{figure}[htbp]
\begin{tabular}{cc}
\begin{minipage}{0.45\hsize}
\centering
\includegraphics[width=5cm,clip]{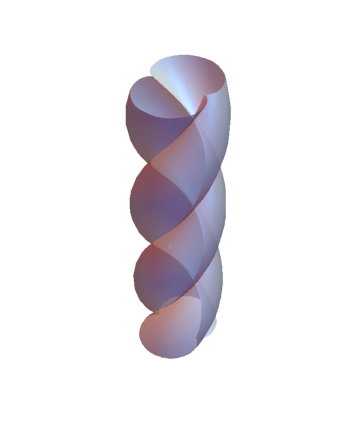} 
\caption{$(\mu,b) = (1/2,b_1')$}
\label{fig:cgc2-a}
\end{minipage}
\quad
\begin{minipage}{0.45\hsize}
\centering
\includegraphics[width=5cm,clip]{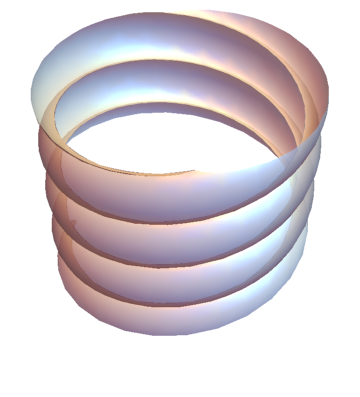} 
\caption{$(\mu,b) = (1/2, b_2')$}
\label{fig:cgc2-b}
\end{minipage}
\end{tabular}
\end{figure}
\end{example}

Finally, we note that the condition \eqref{eq:peri-cond-cgc} is also 
the period condition for cmc-$H$ helicoids (not only for cgc-companion).
Indeed, 
there exist $m \in \mathbb Z$ and $h \in \R$ 
such that 
\begin{equation*}
  \xbf (u+2m K, v+h) = \check \xbf (u,v) + (-1)^m \nbf (u,v) 
\end{equation*}
when \eqref{eq:peri-cond-cgc} is satisfied. Therefore, 
\begin{align*}
 \xbf (u+2m K, v+h) &= \check \xbf (u,v) + \nbf (u,v) = \xbf (u,v) \text{ if $m$ is even},  \\
\xbf (u+4m K, v+h) &= \xbf (u,v) \text{ if $m$ is odd.}
\end{align*}
Thus $\xbf$ is periodic, i.e., $\xbf$ is a helicoidal cylinder 
with constant mean curvature.  
For any fixed $\mu$, there are infinitely many values of $b$ such that 
\eqref{eq:peri-cond-cgc} is satisfied. Thus   
we have given another proof of the following theorem due to Burstall-Kilian.
\begin{theorem}[Theorem 7.6 in \cite{BK}]
  In each associated family of a Delaunay surface, there are infinitely many 
non-congruent cylinders with screw-motion symmetry.
\end{theorem}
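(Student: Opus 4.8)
The plan is to recast the geometric claim entirely in terms of the parameter space $\mathfrak{M}$ and the period condition of Proposition~\ref{prop:period-cond}. By the formulas \eqref{eq:fff-Q-cmchelicoid}, two cmc-$H$ helicoids lie in the same associated family exactly when they share the value of $\mu$; hence fixing a Delaunay surface amounts to fixing some $\mu$ with $0<\mu<1$, and the associated family is $\{(\mu,b)\mid 1\le b\le 1/\mu\}$, interpolating between the unduloid ($b=1$) and the nodoid ($b=1/\mu$). A cylinder with \emph{screw-motion} symmetry is a genuinely helicoidal periodic member, that is, one of nonzero pitch $c_1/b$; since $c_1=\sqrt{(1-\mu^2b^2)(b^2-1)}$ vanishes only at the two endpoints, these correspond to interior values $1<b<1/\mu$.

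First I would record that, as noted just before the statement, the condition \eqref{eq:peri-cond-cgc}, namely $\Phi(\mu,b)\in\mathbb{Q}$, is precisely the periodicity condition for the cmc-$H$ helicoid $\xbf$ of \eqref{eq:cmc-helicoid-rev} itself, not merely for its cgc-companion. Consequently the screw-motion cylinders inside the fixed associated family are in bijection with the interior points $b\in(1,1/\mu)$ satisfying $\Phi(\mu,b)\in\mathbb{Q}$. It therefore suffices to produce infinitely many such $b$ and to verify that distinct $b$ give mutually non-congruent surfaces.

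The heart of the argument is that, for fixed $\mu$, the map $b\mapsto\Phi(\mu,b)$ is real-analytic and non-constant on $(1,1/\mu)$. Real-analyticity is immediate from the definition of $\Phi$ as a ratio built from complete elliptic integrals depending analytically on $b$. For non-constancy I would examine the endpoint $b\to 1^+$: there $c_1\to 0^+$ while the numerator $c_1^2\varPi+b^2E+(1-b^2)K$ tends to $E(\mu)>0$, so $\Phi(\mu,b)\to+\infty$; in particular $\Phi(\mu,\cdot)$ is unbounded, hence non-constant. By continuity its image over $(1,1/\mu)$ contains a non-degenerate interval, which meets $\mathbb{Q}$ in infinitely many points, and each such rational value is attained at some interior $b$. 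This yields infinitely many $b\in(1,1/\mu)$ with $\Phi(\mu,b)\in\mathbb{Q}$, hence infinitely many periodic screw-motion cmc-$H$ cylinders in the associated family.

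Non-congruence then follows once more from \eqref{eq:fff-Q-cmchelicoid}: surfaces sharing $\mu$ but with different $b$ are isometric, yet their Hopf differentials differ by the rotation factor $e^{\iii\theta(\mu,b)}$, so they cannot be congruent. The only step I expect to require genuine care is the rigorous verification of non-constancy of $\Phi(\mu,\cdot)$; in principle one would like to control the asymptotics of the third-kind integral $\varPi(\mu^2b^2,\mu)$ as $\mu^2b^2\to 1$ at the other endpoint, but the clean divergence $\Phi\to+\infty$ at $b=1$ already settles non-constancy without invoking that more delicate limit.
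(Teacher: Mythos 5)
Your proof is correct and follows essentially the same route as the paper: fix $\mu$ to fix the associated family, invoke the period condition $\Phi(\mu,b)\in\mathbb{Q}$ of Proposition~\ref{prop:period-cond} (which, as the paper notes, is also the period condition for $\xbf$ itself), and use real-analyticity and non-constancy of $b\mapsto\Phi(\mu,b)$ to produce infinitely many admissible $b$, with non-congruence read off from the Hopf differential in \eqref{eq:fff-Q-cmchelicoid}. Your explicit check that $\Phi(\mu,b)\to+\infty$ as $b\to1^{+}$ supplies a verification of non-constancy that the paper merely asserts.
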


Graphics in Figures \ref{fig:cmc2-a}, \ref{fig:cmc2-b} are 
cmc-$H$ helicoidal cylinders determined by 
$\mu =1/2$ and $\Phi(1/2, b) +1/2 = 3/2$, that is, 
parallel cmc-$H$ surfaces of those in Figures \ref{fig:cgc2-a}, \ref{fig:cgc2-b}. 

\begin{figure}[htbp]
\begin{tabular}{cc}
\begin{minipage}{0.45\hsize}
\centering
\includegraphics[width=4cm,clip]{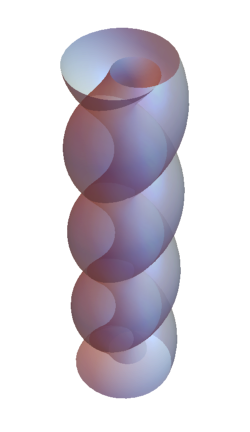}  
\caption{$(\mu,b) = (1/2,b_1')$}
\label{fig:cmc2-a}
\end{minipage}
\quad
\begin{minipage}{0.45\hsize}
\centering
\includegraphics[width=4cm,clip]{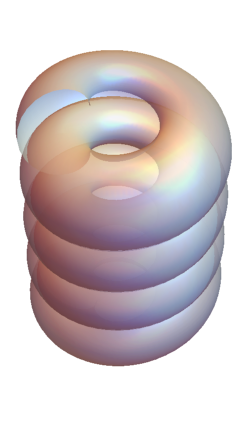} 
\caption{$(\mu,b) = (1/2, b_2')$}
\label{fig:cmc2-b}
\end{minipage}
\end{tabular}
\end{figure}

\subsection{Geometric interpretation}\label{subsec:g-i}
Without loss of generality, we fix $H = -1/2$ in this subsection. 

We have seen that a cmc-$H$ helicoid (or equivalently a cgc-companion) 
is determined by two parameters $\mu, b$. We can restate this so that 
{\it a cmc-$H$ helicoid (or equivalently a cgc-companion) 
is determined by the pitch and radius.} In fact, 
for a cmc-$H$ helicoid $\xbf$ determined by $(\mu,b) \in \mathfrak M$, 
the pitch $\lambda$ and the outer radius $R$ are given by 
\begin{equation}\label{eq:p-and-r}
 \lambda (=c_1) = \sqrt{(1-\mu^2 b^2)(b^2-1)}, \quad R = 1+ \mu b^2. 
\end{equation}    
\eqref{eq:p-and-r} gives a bijective correspondence 
between $\mathfrak M$ and 
\begin{equation*}
 \mathfrak L = \{ (\lambda, R) \mid 0 \le \lambda < \infty, \ 1 \le R < \infty \},  
\end{equation*}
whose inverse is 
\begin{equation}\label{eq:inverse}
 \mu = \frac{\sqrt{\lambda^2 +R^2} - \sqrt{\lambda^2 +(R-2)^2}}%
{\sqrt{\lambda^2 + R^2} + \sqrt{\lambda^2 +(R-2)^2}} , \quad 
b = \frac{1}{2} \left\{ \sqrt{\lambda^2 +R^2} + \sqrt{\lambda^2 +(R-2)^2} \right\}. 
\end{equation}

Moreove we have the following assertion. 
\begin{theorem}
It is determined by the pitch and radius whether a cmc-$H$ helicoid has a period or not. 
More precisely, 
a cmc-$H$ helicoid of the pitch $\lambda$ and the radius $R$ has a period if and only if 
the value of $\Phi=\Phi(\mu, b)$ substituted with \eqref{eq:inverse} 
is rational.  
\end{theorem}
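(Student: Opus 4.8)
The plan is to regard this theorem as nothing more than the period criterion of the preceding subsection re-expressed in the pitch--radius coordinates $(\lambda,R)$ in place of $(\mu,b)$. In the discussion leading to the Burstall--Kilian theorem it was shown that a cmc-$H$ helicoid $\xbf$ determined by $(\mu,b)\in\mathfrak M$ is periodic precisely when $\Phi(\mu,b)\in\mathbb Q$. Since \eqref{eq:p-and-r} sets up a correspondence between $(\mu,b)$ and $(\lambda,R)$, periodicity will depend only on $(\lambda,R)$, which already yields the first assertion of the theorem; it then remains only to carry the criterion through the inverse map \eqref{eq:inverse}.

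First I would pin down the equivalence ``$\xbf$ is periodic $\iff\Phi(\mu,b)\in\mathbb Q$'' in both directions. The implication from $\Phi(\mu,b)\in\mathbb Q$ to periodicity is exactly the explicit construction recorded just before the Burstall--Kilian theorem. For the reverse implication I would invoke that $\xbf$ and its cgc-companion are parallel surfaces with the common Gauss map $\nbf$, namely $\check\xbf=\xbf+\tfrac{1}{2H}\nbf$: any translation period $(u,v)\mapsto(u+T_u,v+T_v)$ of the oriented immersion $\xbf$ is automatically a period of $\nbf$, hence of $\check\xbf$, so Proposition \ref{prop:period-cond} forces $\Phi(\mu,b)\in\mathbb Q$.

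Next I would verify that \eqref{eq:p-and-r} is a bijection $\mathfrak M\to\mathfrak L$ whose inverse is \eqref{eq:inverse}; this elementary algebra is the only genuine computation in the argument. Writing $P=\sqrt{\lambda^2+R^2}$, $Q=\sqrt{\lambda^2+(R-2)^2}$, $s=P-Q$, $t=P+Q$, so that $b=t/2$ and $\mu=s/t$, one gets $\mu b^2=st/4=(P^2-Q^2)/4=R-1$, which reproduces $R=1+\mu b^2$. Likewise, from $\mu^2b^2=s^2/4$ and $b^2=t^2/4$ one has $(1-\mu^2b^2)(b^2-1)=(4-s^2)(t^2-4)/16$, and using $st=4(R-1)$ together with $s^2+t^2=4(\lambda^2+R^2-2R+2)$ the numerator collapses to $4(s^2+t^2)-16-(st)^2=16\lambda^2$, so that $(1-\mu^2b^2)(b^2-1)=\lambda^2$, confirming the pitch formula; the range conditions $0\le\mu\le1\le b\le1/\mu$ then match $0\le\lambda<\infty$, $1\le R<\infty$.

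With the bijection and the criterion both in place, the theorem follows by composition: substituting \eqref{eq:inverse} into $\Phi$ produces a function of $(\lambda,R)$, and a cmc-$H$ helicoid of pitch $\lambda$ and radius $R$ is periodic if and only if this value is rational, bijectivity guaranteeing that every cmc-$H$ helicoid is captured exactly once. I do not expect a genuine obstacle here, since the periodicity criterion is already established and the pitch--radius correspondence is claimed; the only real labor is the routine verification that \eqref{eq:inverse} inverts \eqref{eq:p-and-r} and that the boundary cases $b=1$, $b=1/\mu$, $\mu=0$ behave as expected.
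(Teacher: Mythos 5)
Your proposal is correct and follows essentially the same route as the paper, which states this theorem without a separate proof precisely because it is the composition of the periodicity criterion $\Phi(\mu,b)\in\mathbb Q$ (Proposition \ref{prop:period-cond} together with the remark extending it from cgc-companions to the cmc-$H$ helicoids themselves) with the bijection \eqref{eq:p-and-r}/\eqref{eq:inverse} between $\mathfrak M$ and $\mathfrak L$. Your explicit verification that \eqref{eq:inverse} inverts \eqref{eq:p-and-r} (via $\mu b^2=R-1$ and $(1-\mu^2b^2)(b^2-1)=\lambda^2$) is accurate and supplies the one computation the paper leaves implicit.
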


Let $\rho$ denote the inner radius of $\xbf$. Then $\rho = |1 - \mu b^2| = |2-R|$. 
Thus the first equation of \eqref{eq:inverse} is written as 
\begin{equation}\label{eq:muu}
  \mu = \frac{\sqrt{\lambda^2 +R^2} - \sqrt{\lambda^2 +\rho^2}}%
{\sqrt{\lambda^2 + R^2} + \sqrt{\lambda^2 +\rho^2}}
\end{equation}
 An application of \eqref{eq:muu} is the following. 
\begin{theorem}
 Two cmc-$H$ helicoids belong to the same associated family if and only if 
the values of 
\begin{equation*}
 \frac{\lambda^2 + \rho^2}{\lambda^2 + R^2} 
\left( = 
 \frac{\lambda^2 + (R-2)^2}{\lambda^2 + R^2} 
 \right)
\end{equation*}
are coincident. 
\end{theorem}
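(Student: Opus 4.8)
The plan is to reduce the claim to the characterization of associated families by the single modulus $\mu$ that was obtained from \eqref{eq:fff-Q-cmchelicoid}, and then to exhibit the displayed ratio as a strictly monotone function of $\mu$.

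First I would recall that, by the discussion following \eqref{eq:fff-Q-cmchelicoid}, two cmc-$H$ helicoids lie in the same associated family precisely when they share the same value of $\mu$. Since $\rho = |2-R|$ gives $\rho^2 = (R-2)^2$, the two expressions in the statement are literally equal, so it is enough to prove that the number $(\lambda^2+\rho^2)/(\lambda^2+R^2)$ determines, and is determined by, $\mu$.

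The key step uses formula \eqref{eq:muu}. Writing $r := \sqrt{(\lambda^2+\rho^2)/(\lambda^2+R^2)}$ and dividing numerator and denominator of \eqref{eq:muu} by $\sqrt{\lambda^2+R^2}$, one obtains at once
\[ \mu = \frac{1-r}{1+r}. \]
Because $(\lambda,R) \in \mathfrak L$ satisfies $R \ge 1$, one has $\rho = |2-R| \le R$, hence $0 \le r \le 1$; and the assignment $r \mapsto (1-r)/(1+r)$ is a strictly decreasing continuous bijection of $[0,1]$ onto $[0,1]$, with the endpoints $r=1,\ \mu=0$ and $r=0,\ \mu=1$ corresponding respectively to the circular cylinder and the round sphere. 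Therefore two helicoids have equal $\mu$ if and only if they have equal $r$, equivalently equal $r^2 = (\lambda^2+\rho^2)/(\lambda^2+R^2)$. Combining this with the first step yields the theorem.

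There is no genuine obstacle here: once \eqref{eq:muu} is in hand, the content is simply the observation that the M\"obius-type substitution $r \mapsto (1-r)/(1+r)$ is injective on $[0,1]$. The only points to verify are the elementary inequality $\rho \le R$ on $\mathfrak L$ (so that $r$ is a well-defined element of $[0,1]$) and the bookkeeping that the correspondence $\mu \leftrightarrow r$ is one-to-one; both are routine and follow directly from \eqref{eq:p-and-r} and \eqref{eq:inverse}.
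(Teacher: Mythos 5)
Your proposal is correct and follows essentially the same route as the paper: the paper's proof likewise reduces the claim to equality of $\mu$ and rewrites \eqref{eq:muu} as $\sqrt{\lambda^2+\rho^2}/\sqrt{\lambda^2+R^2}=(1-\mu)/(1+\mu)$, which is just the inverse of your M\"obius substitution. Your added checks (that $r\in[0,1]$ via $\rho=|2-R|\le R$ on $\mathfrak L$, and the injectivity of $r\mapsto(1-r)/(1+r)$) are details the paper leaves implicit, but the argument is the same.
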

\begin{proof}
Two cmc-$H$ helicoids belong to the same associated family if and only if 
the value of $\mu \in [0,1]$ are coincident. On the other hand, 
 \eqref{eq:muu} can be rewritten as 
\begin{equation*}
 \frac{\sqrt{\lambda^2 +\rho^2}}{\sqrt{\lambda^2 +R^2}}
= \frac{1-\mu}{1+\mu} .
\end{equation*}
\end{proof}
\begin{corollary}
 An unduloid and a nodoid are associated (i.e., locally isometric) 
if and only if the ratios 
\begin{equation*}
 \frac{\rho}{R}
\end{equation*}
of the inner radius $\rho$ and the outer radius $R$ are coincident. 
\end{corollary}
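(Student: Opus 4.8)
The plan is to obtain this corollary as an immediate specialization of the preceding Theorem, using the observation that unduloids and nodoids are precisely the rotational (i.e.\ zero-pitch) members of the family of cmc-$H$ helicoids, so that the associated-family invariant $\frac{\lambda^2+\rho^2}{\lambda^2+R^2}$ collapses to $\rho/R$.

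First I would recall, from Remark~\ref{rem:nod-undul} together with the description of the parameter region $\mathfrak M$, that in the parametrization of Proposition~\ref{prop:cmc-helicoid-rev} unduloids are the surfaces with $b=1$ and nodoids are the surfaces with $b=1/\mu$. I would then evaluate the pitch $\lambda=c_1=\sqrt{(1-\mu^2 b^2)(b^2-1)}$ on these two loci: when $b=1$ the factor $b^2-1$ vanishes, while when $b=1/\mu$ the factor $1-\mu^2 b^2$ vanishes, so $\lambda=0$ in both cases. This simply reflects the fact that unduloids and nodoids are Delaunay (rotational) surfaces, that is, helicoidal surfaces of pitch $0$.

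With $\lambda=0$ substituted, the invariant $\dfrac{\lambda^2+\rho^2}{\lambda^2+R^2}$ of the preceding Theorem reduces to $\dfrac{\rho^2}{R^2}=\bigl(\rho/R\bigr)^2$. By that Theorem, an unduloid and a nodoid of the same mean curvature lie in the same associated family exactly when these values agree; and since $\rho\ge 0$ and $R\ge 1>0$, the coincidence of $(\rho/R)^2$ is equivalent to the coincidence of $\rho/R$. This establishes the corollary.

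There is no genuine obstacle here, as the statement is essentially a restatement of the preceding Theorem in the zero-pitch regime. The only point deserving care is confirming that $\lambda=0$ for \emph{both} surface types, so that the associated-family invariant degenerates exactly to $\rho/R$. As a consistency check one may compute directly that, for a fixed $\mu\in(0,1)$, the unduloid ($b=1$) has $\rho=1-\mu$, $R=1+\mu$, while the nodoid ($b=1/\mu$) has $\rho=(1-\mu)/\mu$, $R=(1+\mu)/\mu$; both give $\rho/R=(1-\mu)/(1+\mu)$, which is strictly monotone in $\mu$. Thus $\rho/R$ is a faithful coordinate on the associated family, in agreement with the corollary.
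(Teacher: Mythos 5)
Your proposal is correct and is essentially the argument the paper intends: the corollary is the specialization of the preceding theorem to the zero-pitch (rotational) case, where the invariant $\frac{\lambda^2+\rho^2}{\lambda^2+R^2}$ degenerates to $(\rho/R)^2$. Your verification that $\lambda=c_1=0$ on both loci $b=1$ and $b=1/\mu$, and the explicit check that both give $\rho/R=(1-\mu)/(1+\mu)$, supply exactly the details the paper leaves implicit.
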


\section*{Acknowledgments}

The author thanks Wayne Rossman for his pertinent advice.  
He also thanks the members of Geometry Seminar at Tokyo Institute of 
Technology for giving him the opportunity to talk 
and for inspiring him to improve 
the section \ref{sec:f-i-cmc-heli}.

\bibliographystyle{amsplain}

\begin{thebibliography}{99}
\bibitem{AN}
K. Akutagawa and S. Nishikawa, The Gauss map and spacelike surfaces 
with prescribed mean curvature in Minkowski 3-space, 
Tohoku Math. J. 42 (1990) 67--82.

\bibitem{AEG}
J. A. Aledo, J. M. Espinar and J. A. G\'alvez, 
Timelike surfaces in the Lorentz-Minkowski space 
with prescribed Gaussian curvature and Gauss map, 
J. Geom. Phys. 56 (2006) 1357--1369. 

\bibitem{Bo}
A. I. Bobenko, Constant mean curvature surfaces and integrable equations, 
Russian Math. Surveys 46 (1991) 1--45. 

\bibitem{Br}
 R.~Bryant,
  Surfaces of mean curvature one in hyperbolic space,
  Ast\'erisque 154--155 (1987) 321--347.

\bibitem{BK}
 F. E. Burstall and M. Kilian, 
Equivariant harmonic cylinders, 
Quart. J. Math. 57 (2006) 449--468.  

\bibitem{BFLPP}
 F. E. Burstall, D. Ferus, K. Leschke, F. Pedit and U. Pinkall, 
 Conformal geometry of surfaces in $S^4$ and quaternions. 
Lecture Notes in Mathematics, 1772. Springer-Verlag, Berlin, (2002).


\bibitem{doCD}
 M. P. do Carmo and M. Dajczer, Helicoidal surfaces with constant mean curvature,
Tohoku Math. J. (2) 34 (1982) 425--435.

\bibitem{D}
 C. Delaunay, Sur la surface de revolution dont la courbure moyenne est constante, 
 J. Math. pures et appl. S\'er. 1 (6) (1841) 309--320.

\bibitem{DHMV}
P. Djondjorov, M. Hadzhilazova, I. Mladenov and V. Vassilev, 
Beyond delaunay surfaces, J. Geom. Symmetry Phys. 18 (2010) 1--11. 



\bibitem{HN}
J. Hano and K. Nomizu,  
Surfaces of revolution with constant mean curvature in 
Lorentz-Minkowski space, 
Tohoku Math. J.  36 (1984) 427--437. 

\bibitem{H}
A. Honda, 
On associate families of spacelike Delaunay surfaces, 
Contemp. Math. 675 (2016) 103--120. 

\bibitem{IH}
T. Ishihara and F. Hara, 
Surfaces of revolution in the Lorentzian $3$-space, 
J. Math. Tokushima Univ. 22 (1988) 1--13.  

\bibitem{K}
K. Kenmotsu, Weierstrass formula for surfaces of prescribed mean curvature, 
Math. Ann. 245 (1979) 89--99.

\bibitem{KM}
 T. Klotz Milnor, Harmonic maps and classical surface theory in Minkowski 3-space, 
Trans. Amer. Math. Soc. 280 (1983) 161--185.

\bibitem{KU} M. Kokubu and M. Umehara, 
Orientability of linear Weingarten surfaces, 
spacelike CMC-1 surfaces and maximal surfaces, 
Math. Nachr. 284 No.14--15 (2011) 1903--1918.  

\bibitem{LV1} S. Lee and J. H. Varnado,
Spacelike constant mean curvature surfaces of revolution in Minkowski 3-space, 
Differ. Geom. Dyn. Syst. 8 (2006), 144--165.


\bibitem{LV2} S. Lee and J. H. Varnado, 
Timelike surfaces of revolution with constant mean curvature in Minkowski 3-space,  
Differ. Geom. Dyn. Syst. 9 (2007) 82--102. 

\bibitem{L0}
R. L\'opez, 
Constant mean curvature surfaces foliated by circles in Lorentz-Minkowski space, 
Geom. Dedicata 76 (1999) 81--95. 


\bibitem{L}
R. L\'opez, 
Timelike surfaces with constant mean curvature in Lorentz three-space, 
Tohoku Math. J. 52 (2000) 515--532.  


\bibitem{M}
 M. A.~Magid,
  Timelike surfaces in Lorentz $3$-space 
with prescribed mean curvature and Gauss map,
  Hokkaido Math. J. 19 (1991) 447--464. 

\bibitem{MS}
 M. Melko and I. Sterling, Application of soliton theory 
to the construction of pseudospherical surfaces in $R^3$, 
 Ann. Global Anal. Geom. 11 (1993) no. 1 65--107. 


\bibitem{RV}
E. A. Ruh and J. Vilms, 
The tension field of the Gauss map, Trans. Amer. Math. Soc. 149 (1970) 569--573. 

\bibitem{SSP}
B. A. Shipman, P. D. Shipman and D. Packard, 
Generalized Weierstrass-Enneper represen-
tations of Euclidean, spacelike, and timelike
surfaces: a unified Lie-algebraic formulation, 
J. Geom. 108 (2017), 545--563. 

\bibitem{Sym}
A. Sym, Soliton surfaces and their application (Soliton geometry from spectral
problems) Lect. Notes in Phys. 239 (1985) 154--231. 



\end{thebibliography}

\end{document}